\newtheorem{lemma}{Lemma}[section]
\newtheorem{thm}{Theorem}[section]
\newtheorem{corollary}{Corollary}[section]
\newtheorem{remark}{Remark}[section]
\def\text#1{\mbox{\rm #1}}
\def\X{\mathscr{X}}
\def\Z{\mathcal{Z}}
\def\T{\mathcal{T}}
\DeclarePairedDelimiter{\ceil}{\lceil}{\rceil}
\newcommand{\argmin}{\mathop{\rm argmin}}
\newcommand{\argmax}{\mathop{\rm argmax}}
\newcommand{\norm}[1]{\|{#1} \|}
\newcommand{\fnorm}[1]{\|#1\|_{\rm F}}
\newcommand{\opnorm}[1]{\|#1\|_{\rm op}}
\newcommand{\supp}{{\rm supp}}
\newcommand{\iprod}[2]{\left \langle #1, #2 \right\rangle}
\newcommand{\floor}[1]{{\left\lfloor {#1} \right \rfloor}}
\title{A General Framework for Bayes Structured Linear Models
}
\author[1]{Chao Gao}
\author[2]{Aad W.~van der Vaart}
\author[3]{Harrison H.~Zhou}
\affil[1]{
University of Chicago
}
\affil[2]{
Leiden University
}
\affil[3]{
Yale University
}
\begin{document}
\maketitle

\begin{abstract}
High dimensional statistics deals with the challenge of extracting structured information from complex model settings. Compared with a large number of frequentist methodologies, there are rather few theoretically optimal Bayes methods for high dimensional models. 
This paper provides a unified approach to both Bayes high dimensional statistics and Bayes nonparametrics in a general framework of structured linear models. With a proposed two-step  prior, we prove a general oracle inequality for posterior contraction under an abstract setting that allows model misspecification. The general result can be used to derive new results on optimal posterior contraction under many complex model settings including recent works for stochastic block model, graphon estimation and dictionary learning. It can also be used to improve upon posterior contraction results in literature including sparse linear regression and nonparametric aggregation.
The key of the success lies in the novel two-step prior distribution: one for model structure, i.e., model selection, and the other one for model parameters. The prior on the parameters of a model 
is an elliptical Laplace distribution that is capable of modeling signals with large magnitude, and the prior on the model structure involves a factor that compensates the effect of the normalizing constant of the elliptical Laplace distribution, which is important to attain rate-optimal posterior contraction.
\smallskip

\textbf{Keywords.} Oracle inequality, Stochastic block model, Graphon, Sparse linear regression, Aggregation, Dictionary learning, Posterior contraction
\end{abstract}


\section{Introduction}

Theory for posterior distribution has been extensively investigated in Bayes nonparametrics recently. Important works such as \cite{barron88,barron1999consistency,ghosal1999posterior,ghosal2000convergence,shen2001rates,van2008rates,hoffmann2013adaptive,castillo2014bayesian} established that the posterior distribution contracts to a small neighborhood of the truth under proper conditions on likelihood functions and priors. These works bridge the gap between frequentist and Bayesian views of statistics from a fundamental perspective.

Despite the success of theoretical advancements of Bayes nonparametrics, there are not many theories developed for Bayes high dimensional statistics. A few exceptions are \cite{castillo2012needles} on sparse Gaussian sequence model, \cite{banerjee2014posterior} on bandable precision matrix estimation and \cite{gao2013rate} on sparse PCA. Recently, \cite{castillo2014} established posterior contraction rates for sparse linear regression with a spike and slab prior under comparable assumptions for theoretical justification of the Lasso estimator \citep{tibshirani1996regression,bickel2009simultaneous}. The results of \cite{castillo2014} include posterior contraction rates for prediction error, estimation error, oracle inequalities and model selection consistency. However, sparse linear regression is just one example of high dimensional statistics. There is an indispensable demand of a Bayes theory on more complicated model settings such as dictionary learning, stochastic block model, multi-task learning, etc. It is not clear whether the theory in \cite{castillo2014} can be extended to these more complex settings.

This paper provides a unified methodology and theory for both Bayes high dimensional statistics and Bayes nonparametric statistics in a general framework of structured linear models. We first introduce a unified view of various high-dimensional and nonparametric models, and then propose a single prior distribution for all models considered in our framework. Optimal rates of convergence of the posterior distributions are established under appropriate conditions. The results directly lead to exact minimax posterior contraction rates in stochastic block model, biclustering, sparse linear regression, regression with group sparsity, multi-task learning and dictionary learning. Moreover, we also derive a general posterior oracle inequality that allows arbitrary model misspecification. Applications of the posterior oracle inequality help us obtain posterior contraction rates even for models that are not included in our framework of structured linear models. Examples considered in this paper include nonparametric graphon estimation, various forms of nonparametric aggregation, linear regression with approximate sparsity and wavelet estimation under Besov spaces.

In the heart of the general theory is a novel two-step prior distribution, which naturally accommodates the structured linear model by first modeling the structure and then modeling the parameters. This two-step modeling strategy was first investigated by  \cite{castillo2012needles} for Gaussian sequence models. A key ingredient of the prior distribution is that the tail of the distribution on  the model parameter  cannot be too light \citep{castillo2012needles,castillo2014}, which motivates \cite{castillo2012needles,castillo2014} to use the independent Laplace prior on the parameter. Though the prior distribution leads to optimal posterior contraction rates in Gaussian sequence model \cite{castillo2012needles}, it requires some excessive assumptions on the design matrix when it is applied to sparse linear regression \citep{castillo2014}. The proposal in this paper is an elliptical Laplace prior. With this choice, not only are we able to weaken the assumptions in \citep{castillo2014}, but we can also  solve a more general class of problems in a unified way. To compensate the influence of the normalizing constant of the elliptical Laplace distribution, a correction factor on the prior mass is added in the model selection step. Without this correction factor, the posterior contraction rate would be sub-optimal.

The paper is organized as follows. Section \ref{sec:model} introduces the general framework of structured linear models. A general prior distribution is proposed in Section \ref{sec:prior}. Section \ref{sec:main} presents the main results of the paper including a rate optimal posterior oracle inequality and a posterior rate of contraction. The main results are illustrated by ten examples ranging from nonparametric estimation to high dimensional statistics in Section \ref{sec:app}. In Section \ref{sec:more}, we present further results on sparse linear regression. All technical proofs are gathered in Section \ref{sec:proof} and the supplement \citep{gao16}.

We close this section by introducing some notations. Given an integer $d$, we use $[d]$ to denote the set $\{1,2,...,d\}$, and $[d]^n$ to denote $\{(i_1,...,i_n)\in\mathbb{R}^n:i_1,...,i_n\in[d]\}$. For a set $S$, $|S|$ denotes its cardinality and $\mathbb{I}_S$ denotes the indicator function. For a vector $u=(u_i)$, $\norm{u}=\sqrt{\sum_iu_i^2}$ denotes the $\ell_2$ norm. For a matrix $A=(A_{ij})\in\mathbb{R}^{n\times p}$, and a subset $T\subset[n]\times [p]$, $A_T$ denotes the array $\{A_t\}_{t\in T}$. For any $I\subset[n]$ and $J\subset [p]$, we let $A_{I*}=A_{I\times [p]}$ and $A_{*J}=A_{[n]\times J}$. The Frobenius norm, $\ell_1$ norm and $\ell_{\infty}$ norm are defined by $\fnorm{A}=\sqrt{\sum_{ij}A_{ij}^2}$, $\norm{A}_1=\sum_{ij}|A_{ij}|$ and $\norm{A}_{\infty}=\max_{ij}|A_{ij}|$, respectively. When $A=A^T\in\mathbb{R}^{p\times p}$ is symmetric, the operator norm $\opnorm{A}$ is defined by its largest singular value and the matrix $\ell_1$ norm $\norm{A}_{\ell_1}$ is defined by the maximum row sum. The inner product is defined by $\iprod{u}{v}=\sum_i u_iv_i$ when applied to vectors and is defined by $\iprod{A}{B}=\sum_{ij}A_{ij}B_{ij}$ when applied to matrices. Given two numbers $a,b\in\mathbb{R}$, $a\vee b=\max(a,b)$ and $a\wedge b=\min(a,b)$. The floor function $\floor{a}$ is the largest integer no greater than $a$, and the ceiling function $\ceil{a}$ is the smallest integer no less than $a$. For two positive sequences $\{a_n\},\{b_n\}$, $a_n\lesssim b_n$ means $a_n\leq Cb_n$ for some constant $C>0$ independent of $n$, and $a_n\asymp b_n$ means $a_n\lesssim b_n$ and $b_n\lesssim a_n$.  The symbols $\mathbb{P}$ and $\mathbb{E}$ denote generic probability and expectation operators whose distribution is determined from the context.

\section{Structured linear models}\label{sec:model}

Consider the following structured linear model
\begin{equation}
Y=\X_Z(Q)+W\in\mathbb{R}^N,\label{eq:SLMs}
\end{equation}
where $W\in\mathbb{R}^N$ is a noise vector and $\X_Z(\cdot)$ is a linear operator. The signal $\X_Z(Q)$ has two elements, the parameter $Q$ and the structure/model $Z$ that indexes the linear operator $\X_Z(\cdot)$. In the example of sparse linear regression $Y=X\beta+W$ with a sparse regression coefficient $\beta=(\beta_S^T,0_{S^c}^T)^T$ for some subset $S$, we have $Q=\beta_S$, $Z=S$ and $\X_Z(\cdot)=X_{*S}$. This gives the representation $X\beta=X_{*S}\beta_S=\X_Z(Q)$. In the general theory, the structure $Z$ is in some discrete space $\Z_{\tau}$, which is further indexed by $\tau\in\T$ for some finite set $\T$. For sparse linear regression, $\Z_{\tau}$ is the set of models of size $\tau$.
We introduce a function $\ell(\Z_{\tau})$ to denote the dimension of the parameter $Q$. In other words, $Q\in\mathbb{R}^{\ell(\Z_{\tau})}$, and $\ell(\Z_{\tau})$ is referred to as the intrinsic dimension of the structured linear model. The complexity of the model is defined by the quantity
\begin{equation}
\ell(\Z_{\tau})+\log|\Z_{\tau}|, \label{eq:complexity}
\end{equation}
the sum of the intrinsic dimension and the logarithmic cardinality of the structure space. The definition of (\ref{eq:complexity}) has a frequentist root (see, for example, \cite{yang1998asymptotic,barron1999risk,birge2001gaussian}). As we are going to show later, (\ref{eq:complexity}) will be the posterior contraction rate that we target at. Moreover, in all the examples considered in the paper, (\ref{eq:complexity}) will be the minimax rate under the prediction loss. We require linearity of the operator $\X_Z(\cdot)$. That is, given any $Z\in\Z_{\tau}$ with any $\tau\in\T$, we have
\begin{equation}
\X_Z(Q_1+Q_2) = \X_Z(Q_1)+\X_Z(Q_2),\quad\text{for all }Q_1,Q_2\in\mathbb{R}^{\ell(\Z_{\tau})}. \label{eq:linearity}
\end{equation}
Therefore, we can also view $\X_Z$ as a matrix in $\mathbb{R}^{N\times \ell(\Z_{\tau})}$. From now on, whenever we apply a matrix operation with $\X_Z$, the operator $\X_Z$ is understood to be a matrix with slight abuse of notation.

The above framework of structured linear models includes many examples. In this paper, we consider only the following six representative instances.
\begin{enumerate}
\item \textit{Stochastic block model.} Consider $\X_Z(Q)\in [0,1]^{n\times n}$ to be the mean matrix of a random graph with specification $[\X_Z(Q)]_{ij}=Q_{z(i)z(j)}$. The object $z\in [k]^n$ is the labels of the graph nodes. Moreover, it is easy to see that the parameter $Q$ is of dimension $k^2$, when we do not impose symmetry for $Q$. Therefore, stochastic block model is a special case of our general framework in view of the relation $Z=z$, $\tau=k$, $\T=[n]$, $\Z_k=[k]^n$ and $\ell(\Z_k)=k^2$.

\item \textit{Biclustering.} For a matrix $\X_Z(Q)\in\mathbb{R}^{n\times m}$, a biclustering model means that both rows and columns have clustering structures. That is, $[\X_Z(Q)]_{ij}=Q_{z_1(i)z_2(j)}$ for some $z_1\in[k]^n$ and $z_2\in[l]^m$. The parameter $Q$ has dimension $kl$. Thus, biclustering model is a special case of our general framework by the relation $Z=(z_1,z_2)$, $\tau=(k,l)$, $\T=[n]\times[m]$, $\Z_{k,l}=[k]^n\times [l]^m$ and $\ell(\Z_{k,l})=kl$.

\item \textit{Sparse linear regression.} A $p$-dimensional sparse linear regression model refers to $X\beta$, where $\beta\in\mathbb{R}^p$ has a subset of nonzero entries and it can be represented by $\beta^T=(\beta_S^T,0_{S^c}^T)$ for some subset $S\subset[p]$. In other words, $X\beta=X_{*S}\beta_S$. It can be represented in a general way by letting $Z=S$, $\tau=s$, $\T=[p]$, $\Z_s=\{S\subset[p]: |S|=s\}$, $\ell(\Z_s)=s$ and $Q=\beta_S$. Moreover, $\X_Z(Q)=X_{*S}\beta_S$.

\item \textit{Multiple linear regression with group sparsity.} It refers to the model $XB$ with $B\in\mathbb{R}^{p\times m}$ being a coefficient matrix with nonzero rows in some subset $S\subset[p]$. It can be represented in a general form similarly as the sparse linear regression with $\ell(\Z_s)=ms$.

\item \textit{Multi-task learning.} Similar to the last example, multi-task learning is the collection of $m$ regression problems. We consider $XB$ for some $B\in\mathbb{R}^{p\times m}$. The $j$th column of $B$ can be represented as $B_{*j}=Q_{*z(j)}$ for some $z\in[k]^m$ and $Q\in\mathbb{R}^{p\times k}$. Thus, it is a special case of our general framework by letting $Z=z$, $\tau=k$, $\T=[m]$, $\Z_k=[k]^m$ and $\ell(\Z_k)=pk$.

\item \textit{Dictionary learning.} Consider the model $\X_Z(Q)=QZ\in\mathbb{R}^{n\times d}$ for some $Z\in\{-1,0,1\}^{p\times d}$ and $Q\in\mathbb{R}^{n\times p}$. Each column of $Z$ is assumed to be sparse. Therefore, dictionary learning can be viewed as sparse regression without knowing the design. It can be written in a general form by letting $\tau=(p,s)$, $\T=\{(p,s)\in[n\wedge d]\times[n]: s\leq p\}$, $\Z_{p,s}=\{Z\in\{-1,0,1\}^{p\times d}: \max_{j\in[d]}|\supp(Z_{*j})|\leq s\}$ and $\ell(\Z_{p,s})=np$.
\end{enumerate}

In several examples above, $Q$ or $\X_Z(Q)$ can be a matrix instead of a vector. Alternative definitions of these examples in the general framework are available by vectorization and Kronecker products. For example, in dictionary learning, the linear operator $\X_Z: Q\mapsto QZ$ is from matrix to matrix. By the formula $\text{vec}(QZ)=(Z^T\otimes I_n)\text{vec}(Q)$, the linear operator $\X_Z$ can be identified with the matrix $Z^T\otimes I_n\in\mathbb{R}^{nd\times np}$, which is also a linear operator from $\mathbb{R}^{np}$ to $\mathbb{R}^{nd}$. Similar rearrangements apply to other examples as well.

In addition to the six examples above, we have four more examples that can be well approximated by the general structured linear models.

\begin{enumerate}
\setcounter{enumi}{6}
\item \textit{Nonparametric graphon estimation.} For an undirected graph, the distribution of its adjacency matrix $\{A_{ij}\}\in\{0,1\}^{n\times n}$ is determined by $A_{ij}|(\xi_i,\xi_j)\sim \text{Bernoulli}\left(f(\xi_i,\xi_j)\right)$, where $\{\xi_i\}$ are latent variables with some joint distribution $\mathbb{P}_{\xi}$. The symmetric nonparametric function $f$ is called graphon. It governs the underlying data generating process of a random graph. When $f$ is assumed to be in a H\"{o}lder class, it can be approximated by the stochastic block model.

\item \textit{Aggregation.} Consider a nonparametric regression problem $Y_i=f(x_i)+W_i$ for $i\in[n]$. Given a collection of functions $\{f_1,...,f_p\}$ and a subset $\Theta\subset\mathbb{R}^p$, the goal of aggregation is to approximate $f$ with some estimator so that the error is comparable to what is given by the best among the class $\left\{f_{\beta}=\sum_{j=1}^p\beta_j f_j:\beta\in\Theta\right\}$. In Section \ref{sec:agg}, we show that the regression function $f$ can be approximated by the general structured linear model.

\item \textit{Linear regression with approximate sparsity.} For the linear regression problem $Y=X\beta+W$, assume that $\beta$ is in a weak $\ell_q$ ball so that it has an approximate sparse pattern. Then, $X\beta$ can be approximated by the structured linear model with an exact sparse pattern.

\item \textit{Wavelet estimation under Besov space.} Consider the Gaussian sequence model $Y_{jk}=\theta_{jk}+n^{-1/2}W_{jk}$ for $k=1,...,2^j$ and $j=0,1,2,...$. The signal $\theta$ belongs to a Besov ball $\Theta_{p,q}^{\alpha}(L)$. Then, we can use the general structured linear model to approximate the signal at each resolution separately. This strategy leads to a minimax optimal procedure for a large collection of Besov balls.
\end{enumerate}

\section{The prior distribution}\label{sec:prior}

In this section, we introduce a prior distribution on the structured linear model (\ref{eq:SLMs}). The prior distribution has a two-step sampling procedure. First, we are going to sample a structure $Z$. Second, given $Z$, we sample the parameter $Q$.
Let us first present the prior distribution on the parameter $Q\in\mathbb{R}^{\ell(\Z_{\tau})}$. We propose an elliptical Laplace distribution with density function proportion to $\exp\left(-\lambda\norm{\X_Z(Q)}\right)$. By direct calculations of its normalizing constant, the density function is
\begin{equation}
f_{\ell(\Z_{\tau}),\X_Z,\lambda}(Q)=\frac{\sqrt{\det(\X_Z^T\X_Z)}}{2}\left(\frac{\lambda}{\sqrt{\pi}}\right)^{\ell(\Z_{\tau})}\frac{\Gamma(\ell(\Z_{\tau})/2)}{\Gamma(\ell(\Z_{\tau}))}\exp\left(-\lambda\norm{\X_Z(Q)}\right).\label{eq:laplace}
\end{equation}
A derivation of the normalizing constant with details is given in Section \ref{sec:ELD} of the supplement. Note that (\ref{eq:laplace}) is well-defined when $\det(\X_Z^T\X_Z)>0$.
Recall that $\X_Z$ is understood as a matrix in $\mathbb{R}^{N\times\ell(\Z_{\tau})}$ whenever a matrix operation is applied. The elliptical Laplace distribution belongs to the elliptical family \citep{fang1990symmetric} with scatter matrix proportional to $(\X_Z^T\X_Z)^{-1}$. Compared with a product measure on $Q$, the density function (\ref{eq:laplace}) involves an extra factor $\frac{\Gamma(\ell(\Z_{\tau})/2)}{\Gamma(\ell(\Z_{\tau}))}$ in the normalizing constant. This factor needs to be corrected in the model selection step.

Let $\epsilon(\Z_{\tau})$ be a function satisfying
\begin{equation}
\epsilon(\Z_{\tau}) \geq \ell(\Z_{\tau})+\log|\Z_{\tau}|.\label{eq:larger}
\end{equation}
The sampling procedure of the prior distribution $\Pi$ on $\X_Z(Q)$ is given by:
\begin{enumerate}
\item Sample $\tau\sim\pi$ from $\T$, where $\pi(\tau)\propto \frac{\Gamma(\ell(\Z_{\tau}))}{\Gamma(\ell(\Z_{\tau})/2)}\exp\left(-D\epsilon(\Z_{\tau})\right)$;
\item Conditioning on $\tau$, sample $Z$ uniformly from the set $\bar{\Z}_{\tau}=\{Z\in\Z_{\tau}: \det(\X_Z^T\X_Z)>0\}$;
\item Conditioning on $(\tau,Z)$, sample $Q\sim f_{\ell(\Z_{\tau}),\X_Z,\lambda}$.
\end{enumerate}
Step 1 weighs the structure index $\tau$ by the function $\epsilon(\Z_{\tau})$ that satisfies (\ref{eq:larger}). For all the examples considered in the paper, $\epsilon(\Z_{\tau})$ is chosen to be at the same order of the model complexity (\ref{eq:complexity}). The quantity $\frac{\Gamma(\ell(\Z_{\tau}))}{\Gamma(\ell(\Z_{\tau})/2)}$ is a correction factor that is imposed to compensate the influence of $\frac{\Gamma(\ell(\Z_{\tau})/2)}{\Gamma(\ell(\Z_{\tau}))}$ in the elliptical Laplace distribution. Without the correction factor, $\exp\left(-D\epsilon(\Z_{\tau})\right)$ is the complexity prior used by \cite{castillo2012needles,castillo2014} in Gaussian sequence model and sparse linear regression. Since the support $\T$ is a finite set, $\pi$ is a valid probability mass function. Step 2 samples a structure $Z$ uniformly in $\bar{\Z}_{\tau}$. It is sufficient to consider such $Z$ that $\det(\X_Z^T\X_Z)>0$ for all the examples considered in this paper. Such restriction leads to a proper density function (\ref{eq:laplace}) and thus Step 3 is well defined.

After defining the prior, we need to specify the likelihood function. The examples in Section \ref{sec:model} have different distributions. For example, the stochastic block model usually assumes a Bernoulli random graph, while sparse linear regression often works with general sub-Gaussian noise distributions. To pursue a unified approach, we propose to use the Gaussian likelihood $Y|(Z,Q)\sim N(\X_Z(Q),I_N)$ throughout the paper. Then, the posterior distribution is
\begin{eqnarray*}
&& \Pi\left(\X_Z(Q)\in U| Y\right) \\
&=& \frac{\sum_{\tau\in\T}e^{-D\epsilon(\Z_{\tau})}\sum_{Z\in\bar{\Z}_{\tau}}\frac{\sqrt{\det(\X_Z^T\X_Z)}}{|\bar{\Z}_{\tau}|}\left(\frac{\lambda}{\sqrt{\pi}}\right)^{\ell(\Z_{\tau})}\int_{\X_Z(Q)\in U} e^{-\frac{1}{2}\norm{Y-\X_Z(Q)}^2-\lambda\norm{\X_Z(Q)}} dQ}{\sum_{\tau\in\T}e^{-D\epsilon(\Z_{\tau})}\sum_{Z\in\bar{\Z}_{\tau}}\frac{\sqrt{\det(\X_Z^T\X_Z)}}{|\bar{\Z}_{\tau}|}\left(\frac{\lambda}{\sqrt{\pi}}\right)^{\ell(\Z_{\tau})}\int e^{-\frac{1}{2}\norm{Y-\X_Z(Q)}^2-\lambda\norm{\X_Z(Q)}} dQ}.
\end{eqnarray*}
Note that in the above formula of posterior distribution, the factor $\frac{\Gamma(\ell(\Z_{\tau})/2)}{\Gamma(\ell(\Z_{\tau}))}$ in the Laplace normalizing constant has been cancelled out by the correction factor $\frac{\Gamma(\ell(\Z_{\tau}))}{\Gamma(\ell(\Z_{\tau})/2)}$ in the model selection prior.

\section{Main results}\label{sec:main}

In this section, we analyze the posterior distribution for the general structured linear model. Though the prior specifies a model $\X_Z(Q)$, we do not need to assume that data is generated from the same model. Instead, we allow data to be generated by an arbitrary signal with sub-Gaussian noise. That is,
$$Y=\theta^*+W,$$
where $W=Y-\theta^*$ is a noise vector with a sub-Gaussian tail satisfying
\begin{equation}
\mathbb{P}\left(\left|\iprod{W}{K}\right|>t\right) \leq e^{-\rho t^2/2}\text{ for all }\norm{K}=1.\label{eq:subG}
\end{equation}
The sub-Gaussianity number $\rho>0$ is assumed to be a constant throughout the paper. It worths noting that $1/\rho$ is a bound on the noise level. We also assume a mild condition on the function $\epsilon(\Z_{\tau})$,
\begin{equation}
\left|\left\{\tau\in\T: t-1<\epsilon(\Z_{\tau})\leq t\right\}\right|\leq t \text{ for all }t\in\mathbb{N}. \label{eq:capacity}
\end{equation}
The condition (\ref{eq:capacity}) is satisfied for all examples considered in the paper. The main result of the paper is stated in the following theorem.
Recall that $\lambda$ and $D$ are parameters of the prior distribution $\Pi$.
\begin{thm} \label{thm:main}
Assume (\ref{eq:larger}), (\ref{eq:subG}) and (\ref{eq:capacity}). Given any $\theta^*\in\mathbb{R}^N$, $\tau^*\in\T$, $Z^*\in\bar{\Z}_{\tau^*}$,  $Q^*\in\mathbb{R}^{\ell(\Z_{\tau^*})}$, any constants $\lambda,\rho>0$ and any sufficiently small constant $\delta\in (0,1)$, there exists some constant $D_{\lambda,\delta,\rho}>0$ only depending on $\lambda,\delta,\rho$, such that
\begin{eqnarray}
\nonumber && \mathbb{E}_{\theta^*}\Pi\left(\epsilon(\Z_{\tau})>(1+\delta_1)\epsilon(\Z_{\tau^*})+\delta_1\norm{\X_{Z^*}(Q^*)-\theta^*}^2\Big|Y\right) \\
\label{eq:main1} &\leq& \exp\left(-C'\left(\epsilon(\Z_{\tau^*})+\norm{\X_{Z^*}(Q^*)-\theta^*}^2\right)\right),
\end{eqnarray}
\begin{eqnarray}
\nonumber && \mathbb{E}_{\theta^*}\Pi\left(\norm{\X_Z(Q)-\theta^*}^2>(1+\delta_2)\norm{\X_{Z^*}(Q^*)-\theta^*}^2+M\epsilon(\Z_{\tau^*})\Big|Y\right) \\
\label{eq:main2} &\leq& \exp\left(-C''\left(\epsilon(\Z_{\tau^*})+\norm{\X_{Z^*}(Q^*)-\theta^*}^2\right)\right),
\end{eqnarray}
and
\begin{eqnarray}
\nonumber && \mathbb{E}_{\theta^*}\|\mathbb{E}_{\Pi}(\X_Z(Q)|Y)-\theta^*\|^2 \\
\label{eq:main33} &\leq& (1+\delta_2)\norm{\X_{Z^*}(Q^*)-\theta^*}^2 +M\epsilon(\Z_{\tau^*}) \\
\nonumber&& + \exp\left(-C'''\left(\epsilon(\Z_{\tau^*})+\norm{\X_{Z^*}(Q^*)-\theta^*}^2\right)\right),
\end{eqnarray}
for any constant $D>D_{\lambda,\delta,\rho}$ with $\delta_1=\delta$, $\delta_2=8\sqrt{14\delta/\rho}$ and some constants $M,C',C'',C'''$ only depending on $\lambda,\delta,\rho,D$.
\end{thm}

Theorem \ref{thm:main} contains three results of an oracle type. The object $\X_{Z^*}(Q^*)$ can be chosen with arbitrary $Q^*$ and $Z^*$, but is usually taken as the oracle model that best approximates the true signal $\theta^*$ in many applications. The first result (\ref{eq:main1}) shows that the model complexity selected by the posterior distribution is not greater than the sum of the complexity of the oracle and a model misspecification term quantified by $\norm{\X_{Z^*}(Q^*)-\theta^*}^2$. The second result (\ref{eq:main2}) is a posterior oracle inequality for the squared error loss $\norm{\X_Z(Q)-\theta^*}^2$. Compared with that of the oracle $\X_{Z^*}(Q^*)$, the squared error loss of $\X_Z(Q)$ has an extra term proportional to $\epsilon(\Z_{\tau^*})$. The third result is an oracle inequality for the posterior mean $\mathbb{E}_{\Pi}(\X_Z(Q)|Y)$. It is worth noting that $\exp\left(-C'''\left(\epsilon(\Z_{\tau^*})+\norm{\X_{Z^*}(Q^*)-\theta^*}^2\right)\right)$ is negligible compared with $(1+\delta_2)\norm{\X_{Z^*}(Q^*)-\theta^*}^2 +M\epsilon(\Z_{\tau^*})$ in all the examples considered in the paper.

When the model is well specified in the sense that $\theta^*=\X_{Z^*}(Q^*)$, Theorem \ref{thm:main} reduces to the following results on posterior contraction.

\begin{corollary} \label{cor:main}
Assume (\ref{eq:larger}), (\ref{eq:subG}) and (\ref{eq:capacity}). For any $\theta^*=\X_{Z^*}(Q^*)$ with any $Z^*\in\bar{\Z}_{\tau^*}$, any $\tau^*\in\T$, any $Q^*\in\mathbb{R}^{\ell(\Z_{\tau^*})}$, any constants $\lambda,\rho>0$ and any sufficiently small constant $\delta\in (0,1)$, there exists some constant $D_{\lambda,\delta,\rho}>0$ only depending on $\lambda,\delta,\rho$, such that
$$\mathbb{E}_{\theta^*}\Pi\left(\epsilon(\Z_{\tau})>(1+\delta)\epsilon(\Z_{\tau^*})\Big|Y\right) \leq \exp\left(-C'\epsilon(\Z_{\tau^*})\right),$$
$$ \mathbb{E}_{\theta^*}\Pi\left(\norm{\X_Z(Q)-\theta^*}^2>M\epsilon(\Z_{\tau^*})\Big|Y\right) \leq \exp\left(-C''\epsilon(\Z_{\tau^*})\right),$$
and
$$\mathbb{E}_{\theta^*}\|\mathbb{E}_{\Pi}(\X_Z(Q)|Y)-\theta^*\|^2\leq M\epsilon(\Z_{\tau^*})+\exp\left(-C'''\epsilon(\Z_{\tau^*})\right)$$
for any constant $D>D_{\lambda,\delta,\rho}$ with some constants $M,C',C'',C'''$ only depending on $\lambda,\delta,\rho,D$.
\end{corollary}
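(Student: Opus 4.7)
The plan is to derive Corollary \ref{cor:main} as an immediate specialization of Theorem \ref{thm:main}. Because the conclusion of the theorem is stated for \emph{any} $\theta^* \in \mathbb{R}^N$, I would simply instantiate it at $\theta^* = \X_{Z^*}(Q^*)$. Under this choice the model-misspecification term $\norm{\X_{Z^*}(Q^*) - \theta^*}^2$ vanishes, and the two conclusions of the theorem collapse precisely to those of the corollary. All hypotheses of the corollary, namely (\ref{eq:larger}), (\ref{eq:subG}), and (\ref{eq:capacity}), are identical to those of the theorem, so nothing extra needs to be verified.

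More concretely, substituting $\theta^* = \X_{Z^*}(Q^*)$ into (\ref{eq:main1}) kills the $\delta_1 \norm{\X_{Z^*}(Q^*) - \theta^*}^2$ summand inside the event as well as the $\norm{\X_{Z^*}(Q^*) - \theta^*}^2$ contribution in the exponent on the right-hand side; since $\delta_1 = \delta$, this yields exactly the first display of the corollary. Substituting the same identity into (\ref{eq:main2}) erases the $(1+\delta_2)\norm{\X_{Z^*}(Q^*) - \theta^*}^2$ term from the event, leaving only $M \epsilon(\Z_{\tau^*})$ on the right-hand side of the inequality, and the exponential bound likewise reduces to $\exp(-C'' \epsilon(\Z_{\tau^*}))$, which is the second display. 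The constants $D_{\lambda,\delta,\rho}$, $M$, $C'$, $C''$ are inherited verbatim from the theorem.

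There is no genuine obstacle: the corollary is a purely cosmetic restatement of the well-specified case. The one point worth flagging for the reader is that the constant $\delta_2 = 8\sqrt{\delta/\rho}$ appearing in Theorem \ref{thm:main} disappears from the corollary's statement precisely because the quantity it multiplied has been set to zero, which is why a single parameter $\delta$ is enough to describe the posterior contraction rate in the well-specified regime.
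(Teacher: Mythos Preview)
Your proposal is correct and matches the paper's own proof exactly: the paper simply states that Corollary \ref{cor:main} is a direct consequence of Theorem \ref{thm:main} by letting $\theta^*=\X_{Z^*}(Q^*)$. Your additional remarks on how the $\delta_2$ term disappears are accurate and add helpful clarification beyond what the paper writes.
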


\begin{remark}
The above results hold for all $\epsilon(\Z_{\tau})$ satisfying (\ref{eq:larger}). By choosing $\epsilon(\Z_{\tau})$ at the same order of (\ref{eq:complexity}), we obtain the contraction rate $\ell(\Z_{\tau^*})+\log|\Z_{\tau^*}|$ for the posterior distribution. As we are going to show in the next section, this rate is minimax optimal for all the examples considered in the paper.
From now on, we refer to both (\ref{eq:complexity}) and $\epsilon(\Z_{\tau})$ as the complexity function.
\end{remark}

\begin{remark}\label{rmk:trivial}
By carefully examining the proof, the assumption (\ref{eq:capacity}) can be weakened. In fact, we only require $\left|\left\{\tau\in\T: t-1<\epsilon(\Z_{\tau})\leq t\right\}\right|\leq at^b$ for arbitrary constants $a,b>0$ for the result of Theorem \ref{thm:main} to hold. However, the condition  (\ref{eq:capacity}) is simpler and is sufficient for all the examples considered in the paper. For example, $|\{k\in[n]: t-1<k^2+n\log k\leq t\}|\leq 1$ for stochastic block model,  and $\left|s\in[p]:t-1<2s\log\frac{ep}{s}\leq t\right|\leq 1$ for sparse linear regression.
\end{remark}

\begin{remark}\label{rmk:1+delta}
It is worth noting that the constant $(1+\delta_2)$ in (\ref{eq:main2}) can be arbitrarily close to $1$, as long as $D$ is chosen sufficiently large. Since our procedure involves a model selection step, an oracle inequality with constant exactly $1$ may be impossible, which is suggested by a counter-example in \cite{rigollet2012sparse} for sparse linear regression. 
\end{remark}

\begin{remark}
Note that we do not impose any assumption on the operator $\X_Z(\cdot)$ besides its linearity (\ref{eq:linearity}). In the regression model, this means the results are assumption-free for the design matrix, as those in the frequentist literature \citep{yang1999model,barron1999risk}.
\end{remark}

\section{Applications}\label{sec:app}

\subsection{Stochastic block model}\label{sec:SBM}

The stochastic block model was proposed by \cite{holland1983stochastic} to model random graphs with a community structure. Given a symmetric adjacency matrix $A=A^T\in\{0,1\}^{n\times n}$ that codes an undirected network with no self loop in the sense that $A_{ii}=0$ for all $i\in[n]$, the stochastic block model assumes $\{A_{ij}\}_{i>j}$ are independent Bernoulli random variables with mean $\theta_{ij}=Q_{z(i)z(j)}\in[0,1]$ for some matrix $Q\in[0,1]^{k\times k}$ and some label vector $z\in [k]^n$. In other words, the probability that there is an edge between the $i$th and the $j$th nodes only depends on their community labels $z(i)$ and $z(j)$. Recently, the problem of estimating the success matrix $\theta$  receives some attention. The minimax rate of estimating $\theta$ under the Frobenius norm was established by \cite{gao2014rate}. However, the upper bound in \cite{gao2014rate} was achieved by a procedure assuming the knowledge of the true number of community $k^*$, which is not adaptive. The Bayes framework proposed in this paper provides a natural solution to adaptive estimation for stochastic block model.

Let us write the stochastic block model in a general from as $\theta_{ij}=[\X_Z(Q)]_{ij}=Q_{z(i)z(j)}$ for all $i\neq j$. We do not need to model the diagonal entries because $A_{ii}=0$ for all $i\in[n]$ as convention. Then, $Z=z$, $\tau=k$, $\T=[n]$ and $\Z_k=[k]^n$. Though the true parameter $Q^*$ is symmetric, we do not impose symmetry for the prior distribution. Hence, $\ell(\Z_k)=k^2$ and (\ref{eq:larger}) is satisfied with $\epsilon(\Z_k)=k^2+n\log k$. The general prior distribution $\Pi$ can be specialized to this case as follows,
\begin{enumerate}
\item Sample $k\sim\pi$ from $[n]$, where $\pi(k)\propto\frac{\Gamma(k^2)}{\Gamma(k^2/2)}\exp\left(-D(k^2+n\log k)\right)$;
\item Conditioning on $k$, sample $z$ uniformly from the set $\{z\in [k]^n: \min_{u\in[k]}|\{i\in[n]:z(i)=u\}|>0\}$;
\item Conditioning on $(k,z)$, sample $Q\sim f_{k,z,\lambda}$, where $f_{k,z,\lambda}(Q)\propto e^{-\lambda\sqrt{\sum_{i\neq j}Q_{z(i)z(j)}^2}}$;
\item Set $\theta_{ij}=Q_{z(i)z(j)}$ for all $i\neq j$ and $\theta_{ii}=0$ for all $i\in[n]$.
\end{enumerate}
Note that in Step 2, $\bar{\Z}_k=\{z\in [k]^n: \min_{u\in[k]}|\{i\in[n]:z(i)=u\}|>0\}$. In other words, $\bar{\Z}_k$ is the set of label assignments that induce $k$ clusters. For each $u\in[k]$, $|\{i\in[n]:z(i)=u\}|$ is the size of the $u$th cluster. If for some $u\in[k]$, $|\{i\in[n]:z(i)=u\}|=0$, then there must exists some $k_1<k$ such that $z\in\bar{\Z}_{k_1}$.
Moreover, it is easy to see that for any $z\in\bar{\Z}_k$, $(Q_1)_{z(i)z(j)}=(Q_2)_{z(i)z(j)}$ for all $i\neq j$ implies $Q_1=Q_2$. This indicates that the corresponding linear operator $\X_Z(\cdot)$ is not degenerate.
To help understand the density function $f_{k,z,\lambda}$ in Step 3, consider the case of equal community sizes, i.e., $|\{i\in[n]:z(i)=u\}|=n/k$ for all $u\in[k]$. Then $f_{k,z,\lambda}(Q)\propto e^{-\frac{n\lambda}{k}\fnorm{Q}}$, if we include the diagonal entries and treat $\theta_{ii}$ as $Q_{z(i)z(i)}$.

To study the posterior distribution, we assume that the adjacency matrix is generated by the true mean $\theta_{ij}^*=Q^*_{z^*(i)z^*(j)}=Q^*_{z^*(j)z^*(i)}\in[0,1]$ for $i\neq j$ and $\theta_{ii}^*=0$ for all $i\in[n]$, where $z^*\in\bar{\Z}_{k^*}$ for some $k^*\in[n]$. It can be shown that the noise $W=A-\theta^*$ satisfies (\ref{eq:subG}) for some constant $\rho>0$ by Hoeffding's inequality, and the complexity function $\epsilon(\Z_{\tau})=k^2+n\log k$ satisfies (\ref{eq:capacity}). Hence, Corollary \ref{cor:main} can be specialized for the stochastic block model as follows.
\begin{corollary}\label{cor:SBM}
For any $\theta^*$ and $k^*$ specified above, any constant $\lambda>0$ and any sufficiently small constant $\delta\in(0,1)$, there exists some constant $D_{\lambda,\delta}>0$ only depending on $\lambda,\delta$ such that
$$\mathbb{E}_{\theta^*}\Pi\left(k^2+n\log k>(1+\delta)\left((k^*)^2+n\log k^*\right)\Big|A\right)\leq \exp\left(-C'((k^*)^2+n\log k^*)\right)$$
and
$$\mathbb{E}_{\theta^*}\Pi\left(\fnorm{\theta-\theta^*}^2>M((k^*)^2+n\log k^*)\Big|A\right)\leq \exp\left(-C''((k^*)^2+n\log k^*)\right)$$
for any constant $D>D_{\lambda,\delta}$ with some constants $M,C',C''$ only depending on $\lambda,\delta,D$.
\end{corollary}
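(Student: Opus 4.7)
The plan is to derive Corollary \ref{cor:SBM} as a direct specialization of Corollary \ref{cor:main}, so the proof reduces to matching notation and verifying the three hypotheses (\ref{eq:larger}), (\ref{eq:subG}), and (\ref{eq:capacity}). Under the identifications $Z = z$, $\tau = k$, $\T = [n]$, $\Z_k = [k]^n$, $\ell(\Z_k) = k^2$, and $|\Z_k| = k^n$, the complexity (\ref{eq:complexity}) is exactly $k^2 + n\log k$, so taking $\epsilon(\Z_k) = k^2 + n\log k$ satisfies (\ref{eq:larger}) with equality. Since the true mean has the form $\theta^* = \X_{z^*}(Q^*)$ with $z^* \in [k^*]^n$, Corollary \ref{cor:main} applies in the well-specified regime with $\tau^* = k^*$ and $Z^* = z^*$, yielding $\epsilon(\Z_{\tau^*}) = (k^*)^2 + n\log k^*$ as the target rate.

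Next I would verify the remaining two conditions. For (\ref{eq:subG}): the noise entries $W_{ij} = A_{ij} - \theta^*_{ij}$ are bounded in $[-1,1]$ and independent across the upper triangle $\{i > j\}$ (with $W_{ji} = W_{ij}$ and $W_{ii} = 0$), so Hoeffding's inequality applied to $\iprod{W}{K}$ for any $\norm{K} = 1$ produces a sub-Gaussian tail of the required form with a universal constant $\rho > 0$ independent of $n$. For (\ref{eq:capacity}): the map $k \mapsto k^2 + n\log k$ is strictly increasing on $[n]$, so for each integer $t$ the set $\{k \in [n] : t - 1 < \epsilon(\Z_k) \leq t\}$ has cardinality at most one, and hence at most $t$ for every $t \in \mathbb{N}$.

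With all hypotheses in place, I would invoke Corollary \ref{cor:main}. Its first conclusion gives the first displayed inequality of Corollary \ref{cor:SBM} immediately. For the second, I note that $\norm{\X_Z(Q) - \theta^*}^2 = \fnorm{\theta - \theta^*}^2$ under the convention $\theta_{ii} = \theta_{ii}^* = 0$, so the squared-error oracle statement of Corollary \ref{cor:main} translates directly into the Frobenius-norm statement for the adjacency mean. I do not anticipate any real obstacle beyond bookkeeping; the only point deserving some care is checking that $\rho$ does not degrade with $n$ in spite of the symmetry constraint on $W$, which is immediate because only $n(n-1)/2$ independent bounded coordinates contribute to the Hoeffding bound for any unit $K$.
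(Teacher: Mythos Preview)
Your proposal is correct and follows exactly the paper's approach: the paper's proof simply states that Corollary \ref{cor:SBM} is a special case of Corollary \ref{cor:main}, and the verification of (\ref{eq:larger}), (\ref{eq:subG}) via Hoeffding, and (\ref{eq:capacity}) is already sketched in the text of Section \ref{sec:SBM} preceding the corollary. Your write-up is in fact more detailed than the paper's one-line proof, and the only small addition worth noting is that the paper explicitly records $\Z_k=\bar{\Z}_k$ (so $z^*\in\bar{\Z}_{k^*}$ automatically), which you use implicitly when invoking Corollary \ref{cor:main}.
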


A previous result on Bayes estimation for the stochastic block model by \cite{pati2015optimal} assumes the knowledge of $k^*$, and the rate is sub-optimal.
To the best of our knowledge, our result is the first adaptive Bayes estimator for the stochastic block model with a posterior contraction rate $(k^*)^2+n\log k^*$, which is optimal according to \cite{gao2014rate}. When $k^*\leq\sqrt{n\log n}$, the rate is dominated by $n\log k^*$, which grows only logarithmically as $k^*$ grows. When $k^*>\sqrt{n\log n}$, the rate is dominated by $(k^*)^2$, corresponding to the number of parameters. Corollary \ref{cor:main} also implies that the posterior mean achieves the minimax rate $(k^*)^2+n\log k^*$.

While our result uses a prior distribution that does not impose symmetry on the mean matrix $\theta$, it may be more desirable to incorporate symmetry from a practical point of view. This can be achieved within our framework of structured linear models. To be specific, we can consider the object $\X_Z(Q)$ to be a triangle array with entries $\{[\X_Z(Q)]_{ij}: 1\leq i<j\leq n\}$. Then, $Q$ is also a triangle array, but it is of dimension $k(k+1)/2$ and has entries $\{Q_{ij}: 1\leq i\leq j\leq k\}$. The linear operator $\X_Z(\cdot)$ that maps from $Q$ to $\X_Z(Q)$ is specified by $\X_Z(Q)=Q_{z(i)z(j)}$. In other words, the symmetric SBM is also a special case of our structured linear models with $Z=z$, $\tau=k$, $\T=[n]$, $\Z_k=[k]^n$, $\ell(\Z_k)=k(k+1)/2$ and $N=n(n-1)/2$.

To close this section, we also mention an important problem of community detection, which is equivalent to estimating the structure $Z$ in our general framework. The posterior distribution of Bayesian community detection was recently analyzed by \cite{van2016bayesian}.

\subsection{Biclustering}

The biclustering model, originated in \cite{hartigan1972direct}, can be viewed as a precursor and an asymmetric extension of the stochastic block model. The data matrix $Y\in\mathbb{R}^{n\times m}$ is  generated by a signal matrix $\theta=(\theta_{ij})$ with $\theta_{ij}=Q_{z_1(i)z_2(j)}$ for some label vectors $z_1\in[k]^n$ and $z_2\in[l]^m$, i.e., the rows of $\theta$ have $k$ clusters and the columns of $\theta$ have $l$ clusters, and the values of $(\theta_{ij})$ that belong to the same row-cluster and the same column-cluster are identical. The goal is to recover the true signal matrix $\theta^*$ from the observation $Y$.

To put the biclustering model in our general framework, we have $Z=(z_1,z_2)$, $\tau=(k,l)$, $\T=[n]\times [m]$, $\Z_{k,l}=[k]^n\times [l]^m$ and $\ell(\Z_{n,l})=kl$. Moreover, the complexity function is $\epsilon(\Z_{k,l})=kl+k\log n+l\log m$, which satisfies (\ref{eq:larger}) and (\ref{eq:capacity}). The general prior $\Pi$ can be specialized to this case as follows,
\begin{enumerate}
\item Sample $(k,l)\sim\pi$ from $[n]\times[m]$, where $\pi(k,l)\propto \frac{\Gamma(kl)}{\Gamma(kl/2)}\exp\left(-D(kl+n\log k+m\log l)\right)$;
\item Conditioning on $(k,l)$, sample $(z_1,z_2)$ uniformly from $\bar{\Z}_{k,l}$;
\item Conditioning on $(k,l,z_1,z_2)$, sample $Q\sim f_{k,l,z_1,z_2,\lambda}$ with $f_{k,l,z_1,z_2,\lambda}(Q)\propto e^{-\lambda\sqrt{\sum_{ij}Q_{z_1(i)z_2(j)}^2}}$;
\item Set $\theta_{ij}=Q_{z_1(i)z_2(j)}$ for all $(i,j)$.
\end{enumerate}
In Step 2,
$$\bar{\Z}_{k,l}=\left\{(z_1,z_2)\in[k]^n\times [l]^m:\min_{u\in[k]}|\{i\in[n]:z_1(i)=u\}|>0, \min_{v\in[l]}|\{j\in[m]:z_2(j)=v\}|>0\right\}.$$
In other words, for any $(z_1,z_2)\in\bar{\Z}_{k,l}$, $z_1$ and $z_2$ induce row and column clustering structures with numbers of clusters being $k$ and $l$, respectively.

To analyze the posterior distribution, assume $Y=\theta^*+W$, where $\theta_{ij}^*=Q^*_{z_1^*(i)z_2^*(j)}$ for $Q^*\in\mathbb{R}^{k^*\times l^*}$ and $(z_1^*,z_2^*)\in[k^*]^n\times [l^*]^m$, and the noise $W$ is assumed to satisfy (\ref{eq:subG}).
\begin{corollary}\label{cor:biclustering}
For any $\theta^*$ and $(k^*,l^*)$ specified above, any constants $\lambda,\rho>0$ and any sufficiently small constant $\delta\in(0,1)$, there exists some constant $D_{\lambda,\delta,\rho}>0$ only depending on $\lambda,\delta,\rho$ such that
\begin{eqnarray*}
&& \mathbb{E}_{\theta^*}\Pi\left(kl+n\log k+m\log l>(1+\delta)\left(k^*l^*+n\log k^*+m\log l^*\right)\Big|Y\right)\\
&\leq& \exp\left(-C'(k^*l^*+n\log k^*+m\log l^*)\right)
\end{eqnarray*}
and
$$\mathbb{E}_{\theta^*}\Pi\left(\fnorm{\theta-\theta^*}^2>M(k^*l^*+n\log k^*+m\log l^*)\Big|Y\right)\leq \exp\left(-C''(k^*l^*+n\log k^*+m\log l^*)\right)$$
for any constant $D>D_{\lambda,\delta,\rho}$ with  some constants $M,C',C''$ only depending on $\lambda,\delta,\rho,D$.
\end{corollary}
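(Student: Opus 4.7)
The plan is to derive Corollary \ref{cor:biclustering} as a direct specialization of the well-specified posterior contraction statement in Corollary \ref{cor:main}. Since the biclustering model is already cast in the abstract framework with $Z=(z_1,z_2)$, $\tau=(k,l)$, $\T=[n]\times[m]$, $\Z_{k,l}=[k]^n\times[l]^m$, $\ell(\Z_{k,l})=kl$, and complexity function $\epsilon(\Z_{k,l})=kl+n\log k+m\log l$, and the data is generated from $\theta^*=\X_{Z^*}(Q^*)$ with $Z^*=(z_1^*,z_2^*)$ and $Q^*\in\mathbb{R}^{k^*\times l^*}$, the task reduces to verifying the three structural hypotheses (\ref{eq:larger}), (\ref{eq:subG}), (\ref{eq:capacity}) of the general theorem. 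Once these are in place, the two displayed bounds follow by substituting $\epsilon(\Z_{\tau^*})=k^*l^*+n\log k^*+m\log l^*$ into the conclusion of Corollary \ref{cor:main}.

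The sub-Gaussian condition (\ref{eq:subG}) on $W$ is part of the hypothesis. For (\ref{eq:larger}), observe $\log|\Z_{k,l}|=n\log k+m\log l$, so $\ell(\Z_{k,l})+\log|\Z_{k,l}|=kl+n\log k+m\log l=\epsilon(\Z_{k,l})$ with equality. The main computation is (\ref{eq:capacity}). I will fix $t\in\mathbb{N}$ and count the pairs $(k,l)\in[n]\times[m]$ with $t-1<\epsilon(\Z_{k,l})\leq t$. For each fixed $k\geq 1$, the map $l\mapsto kl+m\log l$ is strictly increasing and its unit-step increments $k+m\log(1+1/l)$ are at least $k\geq 1$, so at most one integer $l$ can land in a window of length one. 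The admissible $k$ values themselves satisfy $k\leq kl\leq\epsilon(\Z_{k,l})\leq t$ (using $l\geq 1$ and $n\log k,m\log l\geq 0$), hence $k\in[t]$. Summing one $l$ per $k$ over at most $t$ values of $k$ gives the desired bound $\leq t$.

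The one piece of bookkeeping is the identification $\bar{\Z}_{k,l}=\Z_{k,l}$ justifying uniform sampling in Step 2 of the prior; this mirrors the stochastic block model argument, since the linear map $Q\mapsto(Q_{z_1(i)z_2(j)})_{ij}$ has full column rank $kl$ when every row- and column-cluster is populated, so distinct $Q$'s yield distinct $\X_Z(Q)$. I do not expect a genuine obstacle in this proof; beyond the routine assumption verification, the only mildly delicate step is the capacity count above, and everything else is plain substitution into Corollary \ref{cor:main}.
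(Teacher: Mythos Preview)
Your proposal is correct and follows exactly the paper's approach: the paper simply states that Corollary~\ref{cor:biclustering} is a special case of Corollary~\ref{cor:main}, with the assumptions (\ref{eq:larger}) and (\ref{eq:capacity}) asserted in the surrounding text without proof. Your write-up is in fact more complete than the paper's, since you supply an explicit counting argument for the capacity condition (\ref{eq:capacity}) that the paper omits.
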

The posterior contraction rate for recovering a signal matrix with a biclustering structure is $k^*l^*+n\log k^*+m\log l^*$, which is minimax optimal according to \cite{gao2014rate}. To the best of our knowledge, this is the first adaptive estimation result for biclustering with an optimal rate.

\subsection{Sparse linear regression} \label{sec:slr}

Consider a regression problem with fixed design $X\beta$, where $X\in\mathbb{R}^{n\times p}$ and $\beta\in\mathbb{R}^{p}$. The regression coefficient is assumed to be sparse so that $\beta^T=(\beta_S^T,0_{S^c}^T)$ for some $S\subset [p]$. Recovering the mean vector $X\beta$ and the regression vector $\beta$ with a sparse prior has been considered in \cite{castillo2014}. However, the results of \cite{castillo2014} imposed a stronger assumption that is used for the Lasso estimator \cite{bickel2009simultaneous}. In this section, we show that the prior distribution that we propose in Section \ref{sec:prior} leads to optimal posterior contraction rates with minimal assumptions.

The sparse linear regression model is a special case of the general structured linear model (\ref{eq:SLMs}) with $Z=S$, $\tau=s$, $\T=[p]$, $\Z_s=\{S\subset[p]:|S|=s\}$, $\ell(\Z_s)=s$ and $Q=\beta_S$. Then, we have the representation $\X_Z(Q)=X_{*S}\beta_S=X\beta$. Since $\log|\Z_s|=\log{p\choose s}\leq s\log\frac{ep}{s}$, the complexity function $\epsilon(\Z_s)=2s\log\frac{ep}{s}$ satisfies the condition (\ref{eq:larger}). It can be shown that $\epsilon(\Z_{\tau})$ satisfies (\ref{eq:capacity}). We specialize the general prior $\Pi$ in Section \ref{sec:prior} as follows,
\begin{enumerate}
\item Sample $s\sim \pi$ from $[p]$, where $\pi(s)\propto \frac{\Gamma(s)}{\Gamma(s/2)}\exp\left(-2Ds\log\frac{ep}{s}\right)$;
\item Conditioning on $s$, sample $S$ uniformly from $\{S\subset[p]: |S|=s, \det(X_{*S}^TX_{*S})>0\}$;
\item Conditioning on $(s,S)$, sample $\beta_S\sim f_{s,S,\lambda}$ with $f_{s,S,\lambda}(\beta_S)\propto e^{-\lambda\norm{X_{*S}\beta_S}}$ and set $\beta_{S^c}=0$.
\end{enumerate}
In Step 1, we set $\epsilon(\Z_s)=2s\log\frac{ep}{s}$ instead of the exact form of $\ell(\Z_{\tau})+\log|\Z_{\tau}|$ in the exponent for simplicity.
In Step 2, we sample $S$ from the set $\bar{\Z}_s=\{S\subset[p]: |S|=s, \det(X_{*S}^TX_{*S})>0\}$ instead of $\Z_s$ such that the density $f_{s,S,\lambda}$ in Step 3 is not degenerate. Since $X_{*S}\in\mathbb{R}^{n\times s}$, when $s>n$, we have $\bar{\Z}_s=\varnothing$. Note that the exponent on the density of $\beta_S$ is $-\lambda\norm{X_{*S}\beta_S}$, different from $-\lambda\norm{\beta_S}_1$ in \cite{castillo2014}. We allow the prior to depend on the design matrix $X$ to obtain an assumption-free optimal posterior  prediction rate. The idea of design-dependent prior was also employed by \cite{martin2014empirical} in an empirical pseudo-Bayes framework. Since $e^{-\lambda\norm{X_{*S}\beta_S}}$ has an exponential tail, it is capable of modeling a large regression coefficient. We expect that an elliptical distribution with heavier tails than Laplace also works here.

The prior distribution involves a correction factor $\frac{\Gamma(s)}{\Gamma(s/2)}$ in the model selection step to compensate the normalizing constant of the elliptical Laplace distribution. Without this factor, $\exp\left(-2Ds\log\frac{ep}{s}\right)$ is the common prior distribution on the model dimension used in \cite{rigollet2011exponential,castillo2012needles,gao2013rate,castillo2014,martin2014empirical}. Since $\exp\left(-2Ds\log\frac{ep}{s}\right)$ is a decreasing function of $s$, it gives less weights for more complex models. However, with the correction factor, $\pi(s)\propto \frac{\Gamma(s)}{\Gamma(s/2)}\exp\left(-2Ds\log\frac{ep}{s}\right)$ is not necessarily a decreasing function of $s$. For a large $D>0$, it can be shown that $\pi(\sqrt{p})<\pi(p)$, which leads to a counter-intuitive prior modeling strategy. Nevertheless, it is worth noting that the $\pi$ in Step 1 is only part of the prior $\Pi$. The elliptical Laplace distribution used later also contributes to the prior modeling on the dimension. The combination of the two gives a correct prior weight on the model dimension.

Let  $Y=X\beta^*+W$ for some $\beta^*$ with support $S^*$ and sparsity $|S^*|=s^*$, where the noise vector $W$ is assumed to be sub-Gaussian (\ref{eq:subG}). Without loss of generality, we may assume $S^*\in \bar{\Z}_{s^*}$. If $X_{*S^*}$ is collinear in the sense that $\det(X_{*S^*}^TX_{*S^*})=0$, there always exists a $\beta_1$ with support $S_1$ and sparsity $s_1=|S_1|$ such that $X\beta^*=X\beta_1$ and $\det(X_{*S_1}^TX_{*S_1})>0$. We may simply redefine $(s^*,S^*)$ by $(s_1,S_1)$.

\begin{corollary}\label{cor:slr-pred}
For any $\beta^*$, $S^*\in\bar{\Z}_{s^*}$ and $s^*$ specified above, any constants $\lambda,\rho>0$ and any sufficiently small constant $\delta\in(0,1)$, there exists some constant $D_{\lambda,\delta,\rho}>0$ only depending on $\lambda,\delta,\rho$ such that
\begin{equation}
\mathbb{E}_{X\beta^*}\Pi\left(s>(1+\delta)s^*\Big|Y\right)\leq \exp\left(-C's^*\log\frac{ep}{s^*}\right)\label{eq:slr-comp}
\end{equation}
and
\begin{equation}
\mathbb{E}_{X\beta^*}\Pi\left(\norm{X\beta-X\beta^*}^2>Ms^*\log\frac{ep}{s^*}\Big|Y\right)\leq\exp\left(-C''s^*\log\frac{ep}{s^*}\right)\label{eq:slr-pred}
\end{equation}
for any constant $D>D_{\lambda,\delta,\rho}$ with some constants $M,C',C''$ only depending on $\lambda,\delta,\rho,D$.
\end{corollary}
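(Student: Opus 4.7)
The plan is to reduce the claim to Corollary~\ref{cor:main} by verifying that sparse linear regression fits the general framework of Sections~\ref{sec:model}--\ref{sec:prior}, and then to perform a short algebraic translation between a bound on the complexity function $\epsilon(\Z_\tau)$ and a bound on the sparsity $s$ itself. First I would check the hypotheses: with $Z=S$, $\tau=s$, $\T=[p]$, $\Z_s=\{S\subset[p]:|S|=s\}$, $Q=\beta_S$, and $\X_S(\beta_S)=X_{*S}\beta_S$, the operator $\X_S$ is clearly linear. The choice $\epsilon(\Z_s)=2s\log(ep/s)$ satisfies $\epsilon(\Z_s)\geq\ell(\Z_s)+\log|\Z_s|=s+\log\binom{p}{s}$, using the standard bound $\log\binom{p}{s}\leq s\log(ep/s)$ together with $\log(ep/s)\geq 1$. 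The continuous derivative $2\log(p/s)$ is positive for $s<p$, so $s\mapsto 2s\log(ep/s)$ is strictly increasing on $\{1,\dots,p\}$, which makes the capacity condition (\ref{eq:capacity}) trivial (at most one $s$ per unit interval). Sub-Gaussianity of $W$ is assumed, and since $S^*\in\bar{\Z}_{s^*}$, the model is well-specified: $\theta^*=X\beta^*=\X_{S^*}(\beta^*_{S^*})$.

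The prediction bound (\ref{eq:slr-pred}) then follows directly from the second conclusion of Corollary~\ref{cor:main}, since $\X_Z(Q)-\theta^*=X\beta-X\beta^*$ and $\epsilon(\Z_{s^*})=2s^*\log(ep/s^*)$; the factor of $2$ is absorbed into the constants $M$ and $C''$.

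For the sparsity statement (\ref{eq:slr-comp}), the plan is to show that, after excluding a trivial case, the event $\{s>(1+\delta)s^*\}$ is contained in $\{\epsilon(\Z_s)>(1+\delta_0)\epsilon(\Z_{s^*})\}$ for some $\delta_0>0$, and then invoke the first conclusion of Corollary~\ref{cor:main} with $\delta_0$ in place of $\delta$. If $s^*\geq p/(1+\delta)$ the event is empty (since $s\leq p$), so nothing has to be proved. Otherwise $\log(ep/s^*)>1+\log(1+\delta)$, and by the monotonicity of $\epsilon$ in $s$ a direct computation gives, on $\{s>(1+\delta)s^*\}$,
\[
\epsilon(\Z_s)\geq (1+\delta)\,\epsilon(\Z_{s^*})\pth{1-\frac{\log(1+\delta)}{\log(ep/s^*)}}\geq (1+\delta_0)\,\epsilon(\Z_{s^*}),
\]
where $\delta_0:=(\delta-\log(1+\delta))/(1+\log(1+\delta))>0$ (of order $\delta^2$ for small $\delta$).

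The main thing to watch out for is that $\epsilon$ grows sublinearly in $s$, so the naive implication $s>(1+\delta)s^*\Rightarrow\epsilon(\Z_s)>(1+\delta)\epsilon(\Z_{s^*})$ is false in general. One has to use the side constraint $s\leq p$ to keep $\log(ep/s^*)$ bounded away from $1$, which restores a positive (though only quadratically small) multiplicative gap $\delta_0$. Apart from this algebraic step, no further probabilistic or design-related argument is needed beyond what is already encoded in Theorem~\ref{thm:main} and Corollary~\ref{cor:main}; in particular, the elliptical Laplace prior's dependence on the design matrix $X$ is what lets Corollary~\ref{cor:main} apply with no assumptions on $X$.
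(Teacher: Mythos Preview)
Your proposal is correct and follows the paper's approach: the paper simply declares Corollary~\ref{cor:slr-pred} to be a special case of Corollary~\ref{cor:main}, and in the surrounding text remarks that (\ref{eq:slr-comp}) is implied by the bound $s\log(ep/s)\leq(1+\delta_1)s^*\log(ep/s^*)$ without spelling out the algebra, which you have carefully supplied. One small slip to fix: strict monotonicity of $s\mapsto 2s\log(ep/s)$ does \emph{not} give ``at most one $s$ per unit interval'' (near $s=p$ the successive increments are of order $1/p$); the clean way to verify (\ref{eq:capacity}) is to note $\epsilon(\Z_s)=2s\log(ep/s)\geq 2s$, so $|\{s:\epsilon(\Z_s)\leq t\}|\leq t/2\leq t$.
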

The result (\ref{eq:slr-comp}) is a consequence of (\ref{eq:main1}) since $s\log\frac{ep}{s}> (1+\delta_1)s^*\log\frac{ep}{s^*}$ is equivalent to $s>(1+\delta)s^*$. It improves the corresponding bounds in \cite{castillo2012needles,castillo2014} at a constant level. The result (\ref{eq:slr-pred}) achieves the minimax optimal prediction rate with no assumption on the design matrix $X$, which is comparable to the frequentist result in \cite{birge2001gaussian}. A slight improvement of (\ref{eq:slr-pred}) will be discussed in Section \ref{sec:agg}.

Besides the optimal prediction rate, we are ready to obtain optimal estimation rates given (\ref{eq:slr-comp}) and (\ref{eq:slr-pred}). Define
\begin{equation}
\kappa_2=\min_{\{b\neq 0:\norm{b}_0\leq (2+\delta)s^*\}}\frac{\norm{Xb}}{\sqrt{n}\norm{b}}\quad\text{and}\quad\kappa_1=\min_{\{b\neq 0:\norm{b}_0\leq (2+\delta)s^*\}}\frac{\sqrt{s^*}\norm{Xb}}{\sqrt{n}\norm{b}_1}.\label{eq:RE21}
\end{equation}
Note that $\kappa_2$ is the restricted eigenvalue constant \citep{candes2005decoding,bickel2009simultaneous} and $\kappa_1$ is the compatibility constant \citep{buhlmann2011statistics}.
\begin{corollary}\label{cor:slr-esti}
Under the setting of Corollary \ref{cor:slr-pred}, we have
$$\mathbb{E}_{X\beta^*}\Pi\left(\norm{\beta-\beta^*}^2>M\frac{s^*\log\frac{ep}{s^*}}{n\kappa_2^2}\Big|Y\right)\leq 2\exp\left(-(C'+ C'')s^*\log\frac{ep}{s^*}\right)$$
and
$$\mathbb{E}_{X\beta^*}\Pi\left(\norm{\beta-\beta^*}_1^2>M\frac{(s^*)^2\log\frac{ep}{s^*}}{n\kappa_1^2}\Big|Y\right)\leq 2\exp\left(-(C'+ C'')s^*\log\frac{ep}{s^*}\right)$$
for the same constants $M,C',C''$ in Corollary \ref{cor:slr-pred}.
\end{corollary}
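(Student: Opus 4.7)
The plan is to convert the posterior bounds in Corollary \ref{cor:slr-pred} into estimation bounds by exploiting the sparsity control that is bundled with the prior: every posterior draw $\beta$ is supported on a set of cardinality exactly $s$, so the difference $\beta - \beta^*$ is automatically sparse once we know $s$ is small. I would introduce the two events
\begin{align*}
A_1 &= \{s \le (1+\delta) s^*\}, \\
A_2 &= \{\norm{X\beta - X\beta^*}^2 \le M s^* \log(ep/s^*)\},
\end{align*}
where the constant $M$ is the one from Corollary \ref{cor:slr-pred}. By that corollary, the posterior probabilities of $A_1^c$ and $A_2^c$ are each bounded in expectation by $\exp(-C' s^*\log(ep/s^*))$ and $\exp(-C'' s^*\log(ep/s^*))$, respectively.

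On the intersection $A_1 \cap A_2$, the posterior draw $\beta$ is supported on a set of size at most $s \le (1+\delta)s^*$, so $b := \beta - \beta^*$ satisfies $\norm{b}_0 \le s + s^* \le (2+\delta)s^*$. Hence $b$ belongs to the feasible set defining $\kappa_2$ and $\kappa_1$ in (\ref{eq:RE21}), and the definitions give
\begin{align*}
n \kappa_2^2\, \norm{\beta-\beta^*}^2 &\le \norm{X\beta - X\beta^*}^2, \\
n \kappa_1^2\, \norm{\beta-\beta^*}_1^2 &\le s^* \norm{X\beta - X\beta^*}^2.
\end{align*}
Combining these with the prediction bound available on $A_2$ yields $\norm{\beta-\beta^*}^2 \le M s^*\log(ep/s^*)/(n\kappa_2^2)$ and $\norm{\beta-\beta^*}_1^2 \le M (s^*)^2 \log(ep/s^*)/(n\kappa_1^2)$ throughout $A_1 \cap A_2$.

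Finally, a union bound on the complementary event $A_1^c \cup A_2^c$ produces the factor $2$ in the stated probability bounds, with the same constants $M,C',C''$ as in Corollary \ref{cor:slr-pred}. There is no real obstacle here: the argument is a mechanical translation of a prediction bound into an estimation bound via the restricted eigenvalue and compatibility conditions. The only subtlety worth checking is that the sparsity control from (\ref{eq:slr-comp}) enters with the right constant so that the support of $\beta - \beta^*$ falls inside the set over which $\kappa_2$ and $\kappa_1$ in (\ref{eq:RE21}) are defined, which is exactly why the definition in (\ref{eq:RE21}) uses the threshold $(2+\delta)s^*$ rather than, say, $2s^*$.
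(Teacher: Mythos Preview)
Your proposal is correct and follows essentially the same approach as the paper: use the sparsity control \eqref{eq:slr-comp} to ensure $\beta-\beta^*$ is $(2+\delta)s^*$-sparse, then apply the definitions of $\kappa_2$ and $\kappa_1$ to convert the prediction bound \eqref{eq:slr-pred} into $\ell_2$ and $\ell_1$ estimation bounds. The paper's proof is a one-line version of exactly this argument; your write-up is simply more explicit about the intersection of the two events and the resulting union bound.
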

Compared with the minimax rates \citep{donoho1994minimax,verzelen2012minimax}, Corollary \ref{cor:slr-esti} obtains optimal estimation rates for both $\ell_2$ and $\ell_1$ loss functions. Moreover, the dependence on the quantities $\kappa_2$ and $\kappa_1$ are optimal \citep{raskutti2011minimax}, compared with the Lasso estimator and the spike and slab prior \citep{castillo2014}. When $\kappa\asymp\kappa_1\asymp\kappa_2$, the rates of the Lasso estimator are $\frac{s^*\log p}{n\kappa^4}$ and $\frac{(s^*)^2\log p}{n\kappa^4}$ for the loss $\norm{\cdot}^2$ \citep{bickel2009simultaneous} and the loss $\norm{\cdot}_1^2$ \citep{van2009conditions}, respectively, and the rates of the spike and slab prior are $\frac{s^*\log\frac{ep}{s^*}}{n\kappa^6}$ and $\frac{(s^*)^2\log\frac{ep}{s^*}}{n\kappa^8}$ for the loss $\norm{\cdot}^2$ and $\norm{\cdot}^2_1$ \citep{castillo2014}, respectively.

The results on $\ell_{\infty}$ convergence and model selection consistency for sparse linear regression are not implied by the general theory. We are going to treat it separately in Section \ref{sec:more}.

To close this section, we briefly discuss the computational issue of the proposed prior distribution. A recent theoretical result by \cite{yang2016computational} shows that the mixing-time of a simple MCMC algorithm is polynomial in the setting of Bayesian sparse linear regression. They also use a two-step model selection prior, but the distribution on model parameters is $e^{-\lambda \|X_{*S}\beta_S\|^2}$, compared with our $e^{-\lambda \|X_{*S}\beta_S\|}$. Given the similarity between the two prior distributions, it is conceivable that similar results in \cite{yang2016computational} can also be established in our setting. More interestingly, whether a general theory of computation can be established under our framework of structured linear models will be an important topic to study in the future.

\subsection{Multiple linear regression with group sparsity}\label{sec:group}

Let us consider a multiple regression setting $XB$ for $X\in\mathbb{R}^{n\times p}$ and $B\in\mathbb{R}^{p\times m}$. The matrix $B$ collects regression coefficients from $m$ regression problems. We assume the $m$ regression coefficients share the same support. There is some $S\subset[p]$ such that $B_{S^c*}=0$, i.e., $S$ is the nonzero rows of $B$. The concept of group sparsity was proposed by \cite{bakin1999adaptive,yuan2006model}, and frequentist statistical properties were analyzed by \cite{lounici2011oracle}.

To put the problem in the general framework, we have $Z=S$, $\tau=s$, $\T=[p]$, $\Z=\{S\subset[p]:|S|=s\}$, $\ell(\Z_s)=ms$ and $Q=B_{S*}$. Then,  $\X_Z(Q)=X_{*S}B_{S*}=XB$. The choice $\epsilon(\Z_s)= s\left(m+\log\frac{ep}{s}\right)$  satisfies the conditions (\ref{eq:larger}) and (\ref{eq:capacity}). The prior distribution $\Pi$ is similar to that used in Section \ref{sec:slr},
\begin{enumerate}
\item Sample $s\sim \pi$ from $[p]$, where $\pi(s)\propto \frac{\Gamma(s)}{\Gamma(s/2)}\exp\left(-Ds\left(m+\log\frac{ep}{s}\right)\right)$;
\item Conditioning on $s$, sample $S$ uniformly from $\bar{\Z}_s=\{S\subset[p]: |S|=s, \det(X_{*S}^TX_{*S})>0\}$;
\item Conditioning on $(s,S)$, sample $B_{S*}\sim f_{s,S,\lambda}$ with $f_{s,S,\lambda}(B_{S*})\propto e^{-\lambda\fnorm{X_{*S}B_{S*}}}$ and set $B_{S^c*}=0$.
\end{enumerate}
Note that we sample $S$ from $\bar{\Z}_s$ in Step 2 as for sparse linear regression. Assume the data is generated by $Y=XB^*+W$ for some matrix $B^*$ with support $S^*$ and sparsity $s^*$. Without loss of generality, we assume $S^*\in\bar{\Z}_{s^*}$. The noise matrix $W$ is assumed to be the sub-Gaussian in the sense of (\ref{eq:subG}).

\begin{corollary}\label{cor:gslr-pred}
For any $B^*$, $S^*\in\bar{\Z}_{s^*}$ and $s^*$ specified above, any constants $\lambda,\rho>0$ and any sufficiently small constant $\delta\in(0,1)$, there exists some constant $D_{\lambda,\delta,\rho}>0$ only depending on $\lambda,\delta,\rho$ such that
$$
\mathbb{E}_{XB^*}\Pi\left(s>(1+\delta)s^*\Big|Y\right)\leq \exp\left(-C's^*\left(m+\log\frac{ep}{s^*}\right)\right)$$
and
$$
\mathbb{E}_{XB^*}\Pi\left(\fnorm{XB-XB^*}^2>Ms^*\left(m+\log\frac{ep}{s^*}\right)\Big|Y\right)\leq\exp\left(-C''s^*\left(m+\log\frac{ep}{s^*}\right)\right)$$
for any constant $D>D_{\lambda,\delta,\rho}$ with some constants $M,C',C''$ only depending on $\lambda,\delta,\rho,D$.
\end{corollary}

The posterior contraction rate for the prediction loss is $s^*\left(m+\log\frac{ep}{s^*}\right)$, which is minimax optimal according to \cite{lounici2011oracle,ma2013volume}. Posterior contraction for various estimation loss functions can also be derived in a similar way as in Section \ref{sec:slr}.

\subsection{Multi-task learning}\label{sec:mtl}

Multi-task learning is another name for multiple linear regression in the form of $XB$ with $X\in\mathbb{R}^{n\times p}$ and $B\in\mathbb{R}^{p\times m}$. Compared with $m$ independent linear regression problems, a typical multi-task learning setting assumes some dependent structure among the columns of the coefficient matrix $B$. The group sparsity assumption considered in Section \ref{sec:group} is an example where the columns of $B$ share the same support.

In this section, we consider another special but important class of multi-task learning problems. We assume a clustering structure among the columns of $B$, i.e., $B_{*j}=Q_{*z(j)}$ for some $z\in[k]^m$ and $Q\in\mathbb{R}^{p\times k}$. In other words, the $m$ regression coefficient vectors are allowed to choose from $k$ possibilities. When the design $X$ is an identity matrix, it reduces to an ordinary clustering problem.

Let us write the multi-task learning problem in the general framework. This can be done by letting $Z=z$, $\tau=k$, $\T=[m]$, $\Z_k=[k]^m$ and $\ell(\Z_k)=pk$. Moreover, we have the representation $[\X_Z(Q)]_{*j}=XQ_{*z(j)}$. The complexity function $\epsilon(\Z_{\tau})=pk+m\log k$ satisfies the conditions (\ref{eq:larger}) and (\ref{eq:capacity}).  We consider a full rank design matrix with $\det(X^TX)>0$. The general prior distribution $\Pi$ in Section \ref{sec:prior} can be specialized to this case,
\begin{enumerate}
\item Sample $k\sim \pi$ from $[p]$, where $\pi(k)\propto\frac{\Gamma(pk)}{\Gamma(pk/2)}\exp\left(-D(pk+m\log k)\right)$;
\item Conditioning on $k$, sample $z$ uniformly from the set $\{z\in [k]^m: \min_{u\in[k]}|\{j\in[m]:z(j)=u\}|>0\}$;
\item Conditioning on $(k,z)$, sample $Q\sim f_{k,z,\lambda}$ with $f_{k,z,\lambda}(Q)\propto e^{-\lambda\sqrt{\sum_j\norm{XQ_{z(j)*}}^2}}$;
\item Set $B_{*j}=Q_{*z(j)}$ for all $j\in[m]$.
\end{enumerate}
Note that in Step 2, we have $\bar{\Z}_k=\{z\in [k]^m: \min_{u\in[k]}|\{j\in[m]:z(j)=u\}|>0\}$, which is due to $\det(X^TX)>0$. The full rankness of the design matrix implicitly implies $p\leq n$. In fact, there is no loss to assume $\det(X^TX)>0$, because whenever $\det(X^TX)=0$, one can simply use a subset of the variables that are linearly independent without affecting the prediction error.

We assume that the data is generated as $Y=XB^*+W$ for some matrix $B^*$ satisfying $B^*_{*j}=Q^*_{*z^*(j)}$ with some $Q^*$ and $z^*\in[k^*]^m$. The noise matrix is assumed to satisfy (\ref{eq:subG}).
\begin{corollary}\label{cor:mtl}
For any $B^*$ and $k^*$ specified above, any constants $\lambda,\rho>0$ and any sufficiently small constant $\delta\in(0,1)$, there exists some constant $D_{\lambda,\delta,\rho}>0$ only depending on $\lambda,\delta,\rho$ such that
$$
\mathbb{E}_{XB^*}\Pi\left(pk+m\log k>(1+\delta)(pk^*+m\log k^*)\Big|Y\right)\leq \exp\left(-C'(pk^*+m\log k^*)\right)$$
and
$$
\mathbb{E}_{XB^*}\Pi\left(\fnorm{XB-XB^*}^2>M(pk^*+m\log k^*)\Big|Y\right)\leq\exp\left(-C''(pk^*+m\log k^*)\right)$$
for any constant $D>D_{\lambda,\delta,\rho}$ with some constants $M,C',C''$ only depending on $\lambda,\delta,\rho,D$.
\end{corollary}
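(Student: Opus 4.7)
The plan is to invoke Corollary \ref{cor:main} directly. The multi-task model has already been cast in the general framework with $Z=z$, $\tau=k$, $\T=[p]$, $\Z_k=[k]^m$, $\ell(\Z_k)=pk$, and $[\X_Z(Q)]_{*j}=XQ_{*z(j)}$; after vectorization, $\X_Z$ is a linear map $\mathbb{R}^{pk}\to\mathbb{R}^{nm}$, so (\ref{eq:linearity}) holds. Condition (\ref{eq:larger}) holds with equality, since $\log|\Z_k|=m\log k$ yields $\ell(\Z_k)+\log|\Z_k|=pk+m\log k=\epsilon(\Z_k)$; condition (\ref{eq:capacity}) holds because $k\mapsto pk+m\log k$ is strictly increasing on $[p]$, so the level set $\{k:t-1<\epsilon(\Z_k)\leq t\}$ has cardinality at most $1\leq t$ for every $t\in\mathbb{N}$; and condition (\ref{eq:subG}) is part of the assumption on the noise $W$.

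Second, I would argue that $z^*\in\bar{\Z}_{k^*}$ can be assumed without loss of generality. If the label vector $z^*$ is not surjective onto $[k^*]$, I would relabel it to take values in $[k']$ with $k'=|z^*([m])|\leq k^*$ and correspondingly reduce $Q^*$ to a $p\times k'$ matrix; this does not change $\X_{z^*}(Q^*)=XB^*$, and the replacement only shrinks the target rate $pk^*+m\log k^*$, making the final bounds at least as strong. Under surjectivity and $\det(X^TX)>0$, the map $\X_{z^*}$ is injective (from $\X_{z^*}(Q)=0$ one gets $XQ_{*u}=0$ for every $u$ in the range of $z^*$, hence $Q=0$), so $\det(\X_{z^*}^T\X_{z^*})>0$. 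The same reasoning gives $\bar{\Z}_k=\Z_k$ whenever attention is restricted to surjective labels, consistent with the prior construction of Step 2.

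Applying Corollary \ref{cor:main} with $\theta^*=XB^*=\X_{z^*}(Q^*)$ and $\epsilon(\Z_{\tau^*})=pk^*+m\log k^*$ then yields both displayed inequalities, with the constants $M,C',C''$ and the threshold $D_{\lambda,\delta,\rho}$ inherited from the general result. I expect the only subtle point to be the surjectivity reduction for the oracle label $z^*$, which is the direct analogue of the argument placing $S^*\in\bar{\Z}_{s^*}$ in the sparse linear regression application of Section \ref{sec:slr}; modulo this, the proof is a mechanical instantiation of the general theorem.
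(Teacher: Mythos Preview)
Your proposal is correct and follows exactly the paper's approach: the corollary is obtained as a direct specialization of Corollary~\ref{cor:main} after verifying (\ref{eq:linearity}), (\ref{eq:larger}), (\ref{eq:capacity}) and (\ref{eq:subG}) in the multi-task setting. Your surjectivity reduction for $z^*$ (to ensure $z^*\in\bar{\Z}_{k^*}$) is in fact more careful than the paper's brief claim that $\bar{\Z}_k=\Z_k$; otherwise the argument is a mechanical instantiation, as you say.
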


The posterior contraction rate for multi-task learning is $pk^*+m\log k^*$.  According to \cite{lu2015}, the rate $pk^*+m\log k^*$ is minimax optimal when $pk^*+m\log k^*\leq pm$. The minimax rate for the case $pk^*+m\log k^*>pm$ is simply $pm$, the dimension of $B$. In that case, even the ordinary least-squares estimator $\hat{B}=\argmin_{B}\fnorm{Y-XB}^2$ can achieve the rate.

\subsection{Dictionary learning}\label{sec:dicL}

Dictionary learning  can be viewed as a linear regression problem without knowing the design matrix. Mathematically, the signal matrix $\theta\in\mathbb{R}^{n\times d}$ can be represented as $\theta=QZ$ for some $Q\in\mathbb{R}^{n\times p}$ and $Z\in\mathbb{R}^{p\times d}$. Both the dictionary $Q$ and the coefficient matrix $Z$ are unknown. A common assumption is that each column of $Z$ is sparse, and the goal is to learn the latent sparse representation of the signal. The problem is also referred to as sparse coding \citep{olshausen1996emergence}. Recently, the minimax rate of dictionary learning has been established by \cite{lu2015} for estimating the true signal matrix $\theta^*$. In this section, we provide a Bayes solution to the adaptive estimation problem of dictionary learning. Following \cite{agarwal2013exact}, we consider a discrete version of the problem. Namely, $Z\in\{-1,0,1\}^{p\times d}$. Then, the problem can be represented in a general form by letting $\tau=(p,s)$, $\T=\left\{(p,s)\in[n\wedge d]\times[n]: s\leq p\right\}$, $\Z_{p,s}=\{Z\in\{-1,0,1\}^{p\times d}:\max_{j\in[d]}|\supp(Z_{*j})|\leq s\}$ and $\ell(\Z_{p,s})=np$. Moreover, we have the representation $\X_Z(Q)=QZ$. The complexity function is $\ell(\Z_{p,s})+\log|\Z_{p,s}|=np+d\left(\log{p\choose s}+3\log s\right)$. With $\epsilon(\Z_{p,s})=3\left(np+ds\log\frac{ep}{s}\right)$, (\ref{eq:larger}) and (\ref{eq:capacity}) are satisfied. The general prior distribution $\Pi$ can be specialized into the following sampling procedures:
\begin{enumerate}
\item Sample $(p,s)\sim \pi$ from $\T$ with $\pi(p,s)\propto \frac{\Gamma(np)}{\Gamma(np/2)}\exp\left(-3D\left(np+ds\log\frac{ep}{s}\right)\right)$;
\item Given $(p,s)$, sample $Z$ uniformly from $\bar{\Z}_{p,s}=\left\{Z\in\Z_{p,s}: \det(ZZ^T)>0\right\}$;
\item Given $(p,s,Z)$, sample $Q\sim f_{p,s,Z,\lambda}$ with $f_{p,s,Z,\lambda}(Q)\propto e^{-\lambda\fnorm{QZ}}$;
\item Set $\theta=QZ$.
\end{enumerate}
Note that we have used $\epsilon(\Z_{p,s})=3\left(np+ds\log\frac{ep}{s}\right)$ instead of the exact $\ell(\Z_{\tau})+\log|\Z_{\tau}|$ in Step 1 for simplicity.

We assume that the data is generated by $Y=\theta^*+W$ for some noise matrix $W$ satisfying (\ref{eq:subG}) and $\theta^*=Q^*Z^*$. Without loss of generality, we assume the matrix $Z^*$ belongs to the set $\bar{\Z}_{p^*,s^*}$. If $\det(Z^*(Z^*)^T)=0$, there must exist some $Q_1\in\mathbb{R}^{n\times p_1}$ and $Z_1\in\bar{\Z}_{p_1,s_1}$ such that $\theta^*=Q^*Z^*=Q_1Z_1$.

\begin{corollary}\label{cor:dic}
For any $\theta^*=Q^*Z^*$ with $Z^*\in\bar{\Z}_{p^*,s^*}$ specified above, any constants $\lambda,\rho>0$ and any sufficiently small constant $\delta\in(0,1)$, there exists some constant $D_{\lambda,\delta,\rho}>0$ only depending on $\lambda,\delta,\rho$ such that
$$
\mathbb{E}_{\theta^*}\Pi\left(np+ds\log\frac{ep}{s}>(1+\delta)\left(np^*+ds^*\log\frac{ep^*}{s^*}\right)\Big|Y\right)\leq \exp\left(-C'\left(np^*+ds^*\log\frac{ep^*}{s^*}\right)\right)$$
and
$$
\mathbb{E}_{\theta^*}\Pi\left(\fnorm{\theta-\theta^*}^2>M\left(np^*+ds^*\log\frac{ep^*}{s^*}\right)\Big|Y\right)\leq\exp\left(-C''\left(np^*+ds^*\log\frac{ep^*}{s^*}\right)\right)$$
for any constant $D>D_{\lambda,\delta,\rho}$ with some constants $M,C',C''$ only depending on $\lambda,\delta,\rho,D$.
\end{corollary}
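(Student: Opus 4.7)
The plan is to cast this as a direct application of Corollary~\ref{cor:main} with $\tau^*=(p^*,s^*)$, $Z^*\in\bar{\Z}_{p^*,s^*}$, and $Q^*$ viewed as an element of $\mathbb{R}^{np^*}=\mathbb{R}^{\ell(\Z_{p^*,s^*})}$, using the complexity function $\epsilon(\Z_{p,s})=3\bigl(np+ds\log\frac{ep}{s}\bigr)$. The linearity requirement (\ref{eq:linearity}) holds trivially because $\X_Z(Q)=QZ$ is linear in $Q$ for each fixed $Z$. The sub-Gaussianity condition (\ref{eq:subG}) on $W$ is assumed, so the only substantive work is to check the two structural conditions (\ref{eq:larger}) and (\ref{eq:capacity}) on the complexity function, and then to read off the rate.

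First I would verify (\ref{eq:larger}). Since $\ell(\Z_{p,s})=np$, it suffices to control $\log|\Z_{p,s}|$. Each of the $d$ columns of $Z$ has entries in $\{-1,0,1\}$ with support of size at most $s$, so the number of choices per column is $\sum_{k=0}^s\binom{p}{k}2^k\le (2ep/s)^s$ by the standard binomial-coefficient bound, giving $\log|\Z_{p,s}|\le ds\log(2ep/s)\le 2ds\log(ep/s)$. Combined with $\ell(\Z_{p,s})=np$, this yields $\ell(\Z_{p,s})+\log|\Z_{p,s}|\le np+2ds\log(ep/s)\le 3\bigl(np+ds\log\frac{ep}{s}\bigr)=\epsilon(\Z_{p,s})$, so (\ref{eq:larger}) holds.

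Next I would check (\ref{eq:capacity}). Fix $t\in\mathbb{N}$ and consider pairs $(p,s)\in\T$ with $\epsilon(\Z_{p,s})\in(t-1,t]$. Since $\epsilon(\Z_{p,s})\ge 3np$, we must have $p\le t/(3n)\le t/3$, so there are at most $t/3$ admissible values of $p$. For each such $p$, the map $s\mapsto 3ds\log(ep/s)$ has derivative $3d\log(ep/s)\ge 3d\ge 3$ on $s\in[1,p]$, so the function increases by more than $1$ between consecutive integer values of $s$, and there can be at most one integer $s$ satisfying $3ds\log(ep/s)\in(t-1-3np,t-3np]$. Hence the total count is at most $t/3<t$, so (\ref{eq:capacity}) holds.

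The last step is to invoke Corollary~\ref{cor:main} with $\epsilon(\Z_{\tau^*})=3\bigl(np^*+ds^*\log\frac{ep^*}{s^*}\bigr)$, which immediately yields the two displayed inequalities in the corollary up to the absorbed constant $3$ in $\delta$, $M$, $C'$, $C''$. I expect the main (minor) obstacle to be the verification of (\ref{eq:capacity}), because $\tau=(p,s)$ is multi-dimensional and one has to separate the roles of $p$ and $s$ carefully; the argument above handles this by projecting onto $p$ first and then using strict monotonicity in $s$. Combinatorial bookkeeping for $|\Z_{p,s}|$ is straightforward but requires the coarse-but-adequate bound $(2ep/s)^s$ per column rather than the sharper form in the excerpt's comment on the complexity.
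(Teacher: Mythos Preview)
Your approach is precisely the paper's: the proof there is a one-line remark that Corollaries~\ref{cor:SBM}--\ref{cor:dic} are special cases of Corollary~\ref{cor:main}, with the verification of (\ref{eq:larger}) and (\ref{eq:capacity}) left implicit. Your added checks of these conditions are welcome but not strictly required by the paper's standard.

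There is one slip in your check of (\ref{eq:capacity}). The derivative of $s\mapsto s\log(ep/s)$ is $\log(ep/s)-1=\log(p/s)$, not $\log(ep/s)$; hence the derivative of $s\mapsto 3ds\log(ep/s)$ is $3d\log(p/s)$, which vanishes at $s=p$ and is not bounded below by $3d$. The continuous argument as written therefore fails near $s=p$. The conclusion (at most one admissible $s$ per fixed $p$) is nevertheless correct: for integers $1\le s\le p-1$ one can show the discrete increment $3d\bigl[(s+1)\log\tfrac{ep}{s+1}-s\log\tfrac{ep}{s}\bigr]$ exceeds $1$, crucially using that $p\le n\wedge d\le d$ in the definition of $\T$ (e.g.\ the worst case $s=p-1$ gives increment $3d\bigl[1-(p-1)\log\tfrac{p}{p-1}\bigr]\ge d/(p-1)\ge p/(p-1)>1$). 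Alternatively, since the Remark after Theorem~\ref{thm:main} relaxes (\ref{eq:capacity}) to a polynomial bound $at^b$, the crude count of at most $p\le t/(3n)$ values of $s$ for each of at most $t/(3n)$ values of $p$ already gives a bound of order $t^2$, which suffices.
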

The rate we have obtained from (\ref{cor:dic}) is $np^*+ds^*\log\frac{ep^*}{s^*}$, which is minimax optimal when $np^*+ds^*\log\frac{ep^*}{s^*}\leq nd$ according to \cite{lu2015}. When $np^*+ds^*\log\frac{ep^*}{s^*}> nd$, the minimax rate is just $nd$, the dimension of $\theta$. It can be achieved by the naive estimator $\hat{\theta}=Y$, and thus this is not an interesting case to us. The term $ds^*\log\frac{ep^*}{s^*}$ in the rate is the error for recovering $d$ sparse regression coefficient vectors, and $np^*$ is the price to pay for not knowing the design matrix $Q^*$. The result can be extended to the case where the entries of $Z^*$ are allowed to take values in an arbitrary discrete set with finite cardinality. To the best of our knowledge, this is the first adaptive estimation result for dictionary learning with an optimal prediction rate.

\subsection{Nonparametric graphon estimation}\label{sec:graphon}

Consider a random graph with adjacency matrix $\{A_{ij}\}\in\{0,1\}^{n\times n}$, whose sampling procedure is determined by
\begin{equation}
(\xi_1,...,\xi_n)\sim\mathbb{P}_{\xi},\quad A_{ij}|(\xi_i,\xi_j)\sim \text{Bernoulli}(\theta^*_{ij}),\quad\text{where }\theta^*_{ij}=f^*(\xi_i,\xi_j).\label{eq:graphon-gen}
\end{equation}
For $i\in[n]$, $A_{ii}=\theta_{ii}^*=0$. Conditioning on $(\xi_1,...,\xi_n)$, $A_{ij}=A_{ji}$ is independent across $i>j$. The function $f^*$ on $[0,1]^2$, which is assumed to be symmetric, is called graphon. The concept of graphon is originated from graph limit theory \citep{hoover79,lovasz06,diaconis07,lovasz12} and the studies of exchangeable arrays \citep{aldous81,kallenberg89}. It is the underlying nonparametric object that generates the random graph.

Let us proceed to specify the function class of graphons. Define the derivative operator by
$$\nabla_{jk}f(x,y)=\frac{\partial^{j+k}}{(\partial x)^j(\partial y)^k}f(x,y),$$
and we adopt the convention $\nabla_{00}f(x,y)=f(x,y)$.
The H\"{o}lder norm is defined as
$$||f||_{\mathcal{H}_{\alpha}}=\max_{j+k\leq\floor{\alpha}}\sup_{x,y\in\mathcal{D}}\left|\nabla_{jk}f(x,y)\right|+\max_{j+k=\floor{\alpha}}\sup_{(x,y)\neq (x',y')\in\mathcal{D}}\frac{\left|\nabla_{jk}f(x,y)-\nabla_{jk}f(x',y')\right|}{||(x-x',y-y')||^{\alpha-\floor{\alpha}}},$$
where $\mathcal{D}=\{(x,y)\in[0,1]^2:x\geq y\}$. Then, the graphon class with H\"{o}lder smoothness $\alpha$ is defined by
$$\mathcal{F}_{\alpha}(L)=\left\{0\leq f\leq 1: \norm{f}_{\mathcal{H}_{\alpha}}\leq L, f(x,y)=f(y,x)\text{ for all }x\in\mathcal{D}\right\},$$
where $L>0$ is the radius of the class, which is assumed to be a constant. Recently, a minimax optimal estimator of  $f^*$ was proposed by \cite{gao2014rate} given the knowledge of $\alpha$. In this section, we solve the adaptive graphon estimation problem via a Bayes procedure.

As shown in \cite{gao2014rate}, it is sufficient to approximate a graphon with H\"{o}lder smoothness by a blockwise constant function. In the random graph setting, a blockwise constant function is the stochastic block model. Therefore, we apply the prior distribution in Section \ref{sec:SBM} by equating $f(\xi_i,\xi_j)=\theta_{ij}$. The oracle inequality in Theorem \ref{thm:main} gives the desired bias-variance tradeoff of the problem.

\begin{corollary}\label{cor:graphon}
Consider the prior distribution specified in Section \ref{sec:SBM}.
For the class $\mathcal{F}_{\alpha}(L)$ with $\alpha,L>0$ defined above and any constant $\lambda>0$, there exists some constant $D_{\lambda}>0$ only depending on $\lambda$ such that
\begin{eqnarray*}
&& \sup_{f^*\in\mathcal{F}_{\alpha}(L)}\sup_{\mathbb{P}_{\xi}}\mathbb{E}_{f^*}\Pi\left(\frac{1}{n^2}\sum_{i,j\in[n]}\left(f(\xi_i,\xi_j)-f^*(\xi_i,\xi_j)\right)^2>M\left(n^{-\frac{2\alpha}{\alpha+1}}+\frac{\log n}{n}\right)\Big| A\right) \\
&\leq& \exp\left(-C'\left(n^{\frac{1}{\alpha+1}}+n\log n\right)\right)
\end{eqnarray*}
for any constant $D>D_{\lambda}$ with some constants $M,C'$ only depending on $\lambda,D,L$.
\end{corollary}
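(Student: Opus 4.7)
The plan is to invoke the oracle inequality (\ref{eq:main2}) of Theorem \ref{thm:main} under the stochastic block model prior from Section \ref{sec:SBM}, using a piecewise-constant approximation of $f^*$ as the oracle. Conditional on $\xi=(\xi_1,\ldots,\xi_n)$, the data admit the representation $A=\theta^*+W$ with $\theta^*_{ij}=f^*(\xi_i,\xi_j)$ for $i\neq j$ and $\theta^*_{ii}=0$. Hoeffding's inequality verifies (\ref{eq:subG}) with a universal $\rho>0$, while the validity of (\ref{eq:larger}) and (\ref{eq:capacity}) for $\epsilon(\Z_k)=k^2+n\log k$ was already checked in Section \ref{sec:SBM}, so Theorem \ref{thm:main} applies.

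Given $f^*\in\mathcal{F}_\alpha(L)$ and a realization of $\xi$, I would take $k^*=\ceil{n^{1/((\alpha\wedge 1)+1)}}$, partition $[0,1]$ into $k^*$ intervals $B_1,\ldots,B_{k^*}$ of equal length, set $z^*(i)=u$ whenever $\xi_i\in B_u$, and define $Q^*_{uv}=f^*(x_u,y_v)$ at midpoints $x_u,y_v$ of $B_u,B_v$. For $\alpha\leq 1$, direct H\"older control yields $|f^*(\xi_i,\xi_j)-Q^*_{z^*(i)z^*(j)}|\leq L(k^*)^{-\alpha}$, while for $\alpha>1$ the mean value theorem with $\|\nabla f^*\|_\infty\leq L$ yields $|f^*(\xi_i,\xi_j)-Q^*_{z^*(i)z^*(j)}|\leq L(k^*)^{-1}$. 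Summing over $i\neq j$,
\[
\|\X_{Z^*}(Q^*)-\theta^*\|^2 \;\lesssim\; L^2 n^2 (k^*)^{-2(\alpha\wedge 1)} \;\asymp\; n^{2/((\alpha\wedge 1)+1)}.
\]
Simultaneously $\epsilon(\Z_{k^*})=(k^*)^2+n\log k^*\asymp n^{2/((\alpha\wedge 1)+1)}+n\log n$, so both the bias and the complexity terms of (\ref{eq:main2}) are dominated by $n^2(n^{-2\alpha/(\alpha+1)}+\log n/n)$ in both regimes (for $\alpha>1$, the sieve rate $n^{-1}$ is absorbed into $\log n/n$).

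Inserting these estimates into (\ref{eq:main2}) for $D$ large enough produces the conditional bound
\[
\mathbb{E}\bigl[\Pi(\|\theta-\theta^*\|^2>M' n^2(n^{-2\alpha/(\alpha+1)}+\log n/n)\mid A,\xi)\bigr]\leq \exp(-C'(n^{1/(\alpha+1)}+n\log n)),
\]
where the exponent is controlled by our lower estimate on $\epsilon(\Z_{k^*})+\|\X_{Z^*}(Q^*)-\theta^*\|^2$. To pass from the Frobenius loss to the loss appearing in the corollary, I would note that $\sum_{i,j}(f(\xi_i,\xi_j)-f^*(\xi_i,\xi_j))^2$ differs from $\|\theta-\theta^*\|^2$ only by the diagonal contribution $\sum_i f^*(\xi_i,\xi_i)^2\leq nL^2\ll n\log n$, which is absorbed into the tolerance. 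Since the entire argument is uniform in the realization of $\xi$ and in $f^*\in\mathcal{F}_\alpha(L)$, integrating over $\xi\sim\mathbb{P}_\xi$ preserves the bound uniformly in $\mathbb{P}_\xi$ and in the graphon. The main technical point I anticipate is the bias analysis for $\alpha>1$, where the SBM sieve cannot exploit smoothness beyond order one; this forces the effective choice $k^*\asymp\sqrt n$ and explains why the rate carries the $\log n/n$ residual alongside $n^{-2\alpha/(\alpha+1)}$.
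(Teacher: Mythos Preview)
Your proposal is correct and follows essentially the same route as the paper: apply the oracle inequality (\ref{eq:main2}) of Theorem \ref{thm:main} conditionally on $\xi$ with a block-constant oracle at resolution $k^*=\ceil{n^{1/((\alpha\wedge 1)+1)}}$, then balance the approximation error against $\epsilon(\Z_{k^*})$. The only difference is cosmetic: the paper outsources the bias bound $\sum_{i,j}(Q^*_{z^*(i)z^*(j)}-f^*(\xi_i,\xi_j))^2\lesssim L^2 n^2 (k^*)^{-2(\alpha\wedge 1)}$ to Lemma~2.1 of \cite{gao2014rate}, whereas you construct the oracle explicitly via an equispaced partition and midpoint evaluation; and you are slightly more careful than the paper in isolating the diagonal contribution and in making the conditioning on $\xi$ explicit before integrating.
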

\begin{remark}
The expectation in Corollary \ref{cor:graphon} is associated with the joint distribution (\ref{eq:graphon-gen}) over both $\{A_{ij}\}$ and $\{\xi_i\}$. Moreover, we do not need any assumption on the distribution on $\{\xi_i\}$, and the result of Corollary \ref{cor:graphon} holds uniformly over all $\mathbb{P}_{\xi}$.
\end{remark}

The posterior contraction rate we have obtained for graphon estimation is $n^{-\frac{2\alpha}{\alpha+1}}+\frac{\log n}{n}$, which is minimax optimal for the worst-case design according to \cite{gao2014rate}. When $\alpha\in (0,1)$, the rate is dominated by $n^{-\frac{2\alpha}{\alpha+1}}$, which is the typical two-dimensional nonparametric regression rate. When $\alpha\geq 1$, the rate becomes $\frac{\log n}{n}$, which does not depend on $\alpha$ anymore. The key difference between graphon estimation and nonparametric regression lies in the knowledge of the design sequence $\{\xi_i\}$. A nonparametric regression problem observes the pair $\{(\xi_i,\xi_j),A_{ij}\}$, while graphon estimation only observes the adjacency matrix $\{A_{ij}\}$, resulting in an extra term $\frac{\log n}{n}$ in the rate. To the best of our knowledge, Corollary \ref{cor:graphon} is the first adaptive estimation result on graphon estimation with an optimal convergence rate.

\subsection{Aggregation} \label{sec:agg}

Aggregation in nonparametric regression has been considered by \cite{yang2000combining,nemirovski2000topics,tsybakov2003optimal,catoni2004statistical,yang2004aggregating,leung2006information} among others. Let us start with the nonparametric regression setting with fixed design. The data is generated by
\begin{equation}
Y_i=f^*(x_i)+W_i,\quad i=1,...,n,\label{eq:Y-agg}
\end{equation}
where the noise vector $W=\{W_i\}$ is assumed to satisfy (\ref{eq:subG}). The goal of nonparametric regression is to estimate the true regression function $f^*$ by some estimator $\hat{f}$ under the loss
$$\norm{\hat{f}-f}_n^2=\frac{1}{n}\sum_{i=1}^n\left(\hat{f}(x_i)-f^*(x_i)\right)^2,$$
where $\norm{\cdot}_n$ stands for the empirical $\ell_2$ norm. Assume we are given a collection of functions $\{f_1,...,f_p\}$, called the dictionary. Given a subset $\Theta\subset\mathbb{R}^p$, for $\beta\in\Theta$, define $f_{\beta}=\sum_{j=1}^p\beta_jf_j$. The goal of aggregation is to find an estimator $\hat{f}$ such that its error $\norm{\hat{f}-f^*}_n^2$ is comparable to that given by the best among the class $\{f_{\beta}:\beta\in\Theta\}$. To be specific, one seeks an estimator $\hat{f}$ to satisfy the following oracle inequality,
\begin{equation}
\norm{\hat{f}-f^*}^2_n\leq (1+\delta)\inf_{\beta\in\Theta}\norm{f_{\beta}-f^*}_n^2+\Delta_{n,p}(\Theta)\label{eq:oracle-agg}
\end{equation}
with high probability for some arbitrarily small constant $\delta\in(0,1)$ and some optimal rate function $\Delta_{n,p}(\Theta)$ determined by the class $\Theta$. The right hand side of (\ref{eq:oracle-agg}) is also called the index of resolvability of $f^*$ \citep{barron1991minimum,yang1999model}. Various types of aggregation problems include linear, convex, model selection aggregation, etc., which are determined by the choice of the class $\Theta$. In this section, we provide a single Bayes solution to various types of aggregation problems simultaneously and establish the oracle inequality (\ref{eq:oracle-agg}) under the posterior distribution.

Since the vector $f_{\beta}=(f_{\beta}(x_1),...,f_{\beta}(x_n))$ can be represented as $X\beta$ with $X_{ij}=f_j(x_i)$ for all $(i,j)\in[n]\times[p]$, the aggregation problem can be recast as a linear regression problem. Define $r=\text{rank}(X)$. Without loss of generality, we assume the first $r$ columns of $X$ span the column space of $X$, i.e., $\text{span}(\{X_{*j}\}_{j\in[r]})=\text{span}(\{X_{*j}\}_{j\in[p]})$. We are going to use a modified version of the prior distribution defined in Section \ref{sec:slr}:
\begin{enumerate}
\item Sample $s\sim \pi$ from $[r]$, where $\pi(s)=\mathcal{N}\frac{\Gamma(s)}{\Gamma(s/2)}\exp\left(-Ds\log\frac{ep}{s}\right)$ for $s<r$ and $\pi(r)=\mathcal{N}\frac{\Gamma(r)}{\Gamma(r/2)}\exp(-Dr)$ with some normalizing constant $\mathcal{N}$;
\item Conditioning on $s$, sample $S$ uniformly from $\bar{\Z}_s=\{S\subset[p]: |S|=s, \det(X_{*S}^TX_{*S})>0\}$ if $s<r$ and set $S=[r]$ if $s=r$;
\item Conditioning on $(s,S)$, sample $\beta_S\sim f_{s,S,\lambda}$ with $f_{s,S,\lambda}(\beta_S)\propto e^{-\lambda\norm{X_{*S}\beta_S}}$ and set $\beta_{S^c}=0$.
\end{enumerate}
The prior $\Pi$ is similar to the exponential weights used for sparsity pattern aggregation by \cite{rigollet2011exponential,rigollet2012sparse}. Compared with the prior in Section \ref{sec:slr}, it has a modified weight on the model $S=[r]$, which captures the intrinsic dimension of the matrix $X$. Assuming the data generating process (\ref{eq:Y-agg}), we have the following result implied by Theorem \ref{thm:main}.

\begin{corollary}\label{cor:general-agg}
For any $\beta^*$ with support $S^*\in\bar{\Z}_{s^*}$ and sparsity $s^*=|S^*|\leq r$, any $f^*$, any constants $\lambda,\rho>0$ and any sufficiently small constant $\delta\in(0,1)$, there exists some constant $D_{\lambda,\delta,\rho}$ only depending on $\lambda,\delta,\rho$ such that
\begin{eqnarray*}
&& \mathbb{E}_{f^*}\Pi\left(\norm{f_{\beta}-f^*}_n^2> (1+\delta)\norm{f_{\beta^*}-f^*}_n^2+M\left(\frac{r}{n}\wedge\frac{s^*\log(ep/s^*)}{n}\right)\Big|Y\right) \\
&\leq& \exp\left(-C'\left(n\norm{f_{\beta}-f^*}_n^2+r\wedge s^*\log\frac{ep}{s^*}\right)\right)
\end{eqnarray*}
for any constant $D>D_{\lambda,\delta,\rho}$ with some constants $M,C'$ only depending on $\lambda,\delta,\rho,D$.
\end{corollary}
Since $\text{rank}(X)=r$, it is sufficient to establish the posterior oracle inequality for all $\beta^*$ with sparsity $s^*\leq r$.
Due to the modified prior weight on the model $S=[r]$, Corollary \ref{cor:general-agg} has a better convergence rate than Corollary \ref{cor:slr-pred}. The corresponding frequentist results \citep{rigollet2011exponential,rigollet2012sparse} have leading constant $1$ instead of the $(1+\delta)$ in Corollary \ref{cor:general-agg}. Since our prior and postetior have a subset selection step, the result in \cite{rigollet2012sparse} suggests that the extra constant $\delta$ may be necessary.

Let us specialize Corollary \ref{cor:general-agg} to various types of aggregation problems. Following the notation in \cite{tsybakov2014aggregation}, define the simplex $\Lambda^p=\{\beta\in\mathbb{R}^p:\sum_j\beta_j=1,\beta_j\geq 0\}$ and the $\ell_0$ ball $\mathcal{B}_0(s^*)=\{\beta\in\mathbb{R}^p:|\supp(\beta)|\leq s^*\}$. Then, we consider model selection aggregation $\Theta_{\textsf{(MS)}}=\mathcal{B}_0(1)\cap\Lambda^p$, convex aggregation $\Theta_{\textsf{(C)}}=\Lambda^p$, linear aggregation $\Theta_{\textsf{(L)}}=\mathbb{R}^p$, sparse aggregation $\Theta_{\textsf{($\textsf{L}_s$)}}=\mathcal{B}_0(s^*)$ and sparse convex aggregation $\Theta_{\textsf{($\textsf{C}_s$)}}=\mathcal{B}_0(s^*)\cap\Lambda^p$. For these aggregation problems, define the rate function
$$\Delta_{n,p}(\Theta)=\begin{cases}
\frac{\log p}{n}, & \Theta=\Theta_{\textsf{(MS)}};\\
\sqrt{\frac{1}{n}\log\left(1+\frac{p}{\sqrt{n}}\right)} & \Theta=\Theta_{\textsf{(C)}};\\
\frac{r}{n}, & \Theta=\Theta_{\textsf{(L)}};\\
\frac{s*\log\frac{ep}{s^*}}{n}, & \Theta=\Theta_{\textsf{($\textsf{L}_s$)}};\\
\sqrt{\frac{1}{n}\log\left(1+\frac{p}{\sqrt{n}}\right)}\wedge\frac{s*\log\frac{ep}{s^*}}{n}, & \Theta=\Theta_{\textsf{($\textsf{C}_s$)}}.
\end{cases}$$
\begin{corollary}\label{cor:universal}
Assume $\max_{j\in[p]}\norm{f_j}_n\leq 1$.
For any $f^*$, any $\Theta\in\left\{\Theta_{\textsf{(MS)}},\Theta_{\textsf{(C)}},\Theta_{\textsf{(L)}},\Theta_{\textsf{($\textsf{L}_s$)}},\Theta_{\textsf{($\textsf{C}_s$)}}\right\}$, any constants $\lambda,\rho>0$ and any sufficiently small constant $\delta\in(0,1)$, there exists some constant $D_{\lambda,\delta,\rho}$ only depending on $\lambda,\delta,\rho$ such that
\begin{eqnarray*}
&& \mathbb{E}_{f^*}\Pi\left(\norm{f_{\beta}-f^*}_n^2> (1+\delta)\inf_{\beta\in\Theta}\norm{f_{\beta}-f^*}_n^2+M\left(\Delta_{n,p}(\Theta)\wedge\frac{r}{n}\right)\Big|Y\right) \\
&\leq& \exp\left(-C'n\left(\inf_{\beta\in\Theta}\norm{f_{\beta}-f^*}_n^2+\Delta_{n,p}(\Theta)\wedge\frac{r}{n}\right)\right)
\end{eqnarray*}
for any constant $D>D_{\lambda,\delta,\rho}$ with some constants $M,C'$ only depending on $\lambda,\delta,\rho,D$.
\end{corollary}
Corollary \ref{cor:universal} provides a universal aggregation result with a single posterior distribution. The rate is minimax optimal according to \cite{rigollet2011exponential,wang2011adaptive}. Bayes aggregation was recently studied by \cite{yang2014minimax} under the model misspecification framework \citep{kleijn2006misspecification}. Corollary \ref{cor:universal} is a stronger result of posterior oracle inequality under weaker assumptions compared with that of \cite{yang2014minimax}. Other types of aggregation results such as $\ell_q$ aggregation can also be derived directly from Corollary \ref{cor:general-agg}.

\subsection{Linear regression under weak $\ell_q$ ball}\label{sec:weak-ball}

Section \ref{sec:slr} studied high dimensional linear regression under exact sparsity. In this section, we assume that regression coefficients are approximately sparse. Theorem \ref{thm:main} allows us to derive optimal posterior rates of contraction via a bias variance tradeoff argument. Assume the data is generated by $Y=X\beta^*+W\in\mathbb{R}^p$ with some design $X\in\mathbb{R}^{n\times p}$ and some sub-Gaussian noise vector $W$ satisfying (\ref{eq:subG}). We assume $\beta^*$ is approximately sparse,
$$\beta^*\in\mathcal{B}_q(k)=\left\{\beta\in\mathbb{R}^p: \max_{j\in[p]}j|\beta|_{(j)}^q\leq k\right\}$$
with some $q\in(0,1]$, where we order the absolute values of the entries of $\beta$ by $|\beta|_{(1)}\geq |\beta|_{(2)}\geq ... \geq |\beta|_{(p)}$, i.e., $\beta^*$ is in a weak $\ell_q$ ball with radius at most $k$. For $q=0$,
$$\mathcal{B}_0(k)=\left\{\beta\in\mathbb{R}^p:\sum_{j=1}^p\mathbb{I}\{\beta_j\neq 0\}\leq k\right\},$$
which is reduced to the case of exact sparsity.
To facilitate the presentation, we define the effective sparsity by $s^*=\ceil{x^*}$, where
$$x^*=\max\left\{0\leq x\leq p: x\leq k\left(\frac{n}{\log(ep/x)}\right)^{q/2}\right\}.$$
The effective sparsity $s^*$ is a function of $q,k,p,n$.
In the exact sparse case where $q=0$, we have $s^*=k$. For the prior distribution specified in Section \ref{sec:slr}, we have the following result.
\begin{corollary}\label{cor:slrq-pred}
Assume $\max_{j\in[p]}n^{-1/2}\norm{X_{*j}}\leq L$ for some constant $L>0$. For any $q\in[0,1]$, $k$ and $s^*$ specified above and any constants $\lambda,\rho>0$, there exists some constant $D_{\lambda,\rho}>0$ only depending on on $\lambda,\rho$ such that
$$\sup_{\beta^*\in\mathcal{B}_q(k)}\mathbb{E}_{X\beta^*}\Pi\left(\norm{X\beta-X\beta^*}^2>Ms^*\log\frac{ep}{s^*}\Big|Y\right)\leq\exp\left(-C's^*\log\frac{ep}{s^*}\right)$$
for any constant $D>D_{\lambda,\rho}$ with some constants $M,C'$ only depending on $\lambda,\rho,D,L$.
\end{corollary}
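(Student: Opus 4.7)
My plan is to deduce Corollary \ref{cor:slrq-pred} from the posterior oracle inequality (\ref{eq:main2}) of Theorem \ref{thm:main}, specialized to the sparse linear regression prior of Section \ref{sec:slr} (with complexity function $\epsilon(\Z_s)=2s\log(ep/s)$). The sub-Gaussian assumption (\ref{eq:subG}) is given by hypothesis and (\ref{eq:capacity}) has been verified in Section \ref{sec:slr}, so the theorem applies and produces, for any sparsity level $s^0\in[p]$, any support $S^0\in\bar{\Z}_{s^0}$, and any $\beta^0\in\mathbb{R}^{s^0}$,
\[
\mathbb{E}\,\Pi\!\left(\|X\beta-X\beta^*\|^2>(1+\delta_2)\|X_{*S^0}\beta^0-X\beta^*\|^2+2Ms^0\log(ep/s^0)\,\Big|\,Y\right)\le \exp\!\bigl(-C''\bigl(s^0\log(ep/s^0)+\|X_{*S^0}\beta^0-X\beta^*\|^2\bigr)\bigr).
\]
Hence it suffices to exhibit, uniformly over $\beta^*\in\mathcal{B}_q(k)$, an oracle pair $(S^0,\beta^0)$ of sparsity $s^0\lesssim s^*$ with prediction bias $\|X_{*S^0}\beta^0-X\beta^*\|^2\lesssim s^*\log(ep/s^*)$.

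The oracle will be constructed by combining hard thresholding with a Maurey / Nemirovski empirical-averaging step. Let $S_0$ consist of the indices of the $s^*$ largest coordinates of $|\beta^*|$, keep $\beta^*_{S_0}$ unchanged, and approximate the tail $\beta^*_{S_0^c}$ by Maurey's method: writing $X\beta^*_{S_0^c}$ as $\|\beta^*_{S_0^c}\|_1$ times a convex combination of $\{\pm X_{*j}\}_{j\in S_0^c}$ and averaging $t$ i.i.d.\ draws yields a $t$-sparse $\tilde\beta$ supported on $S_0^c$ with $\|X\tilde\beta-X\beta^*_{S_0^c}\|^2\le L^2 n\|\beta^*_{S_0^c}\|_1^2/t$, by the column-norm hypothesis $\max_j\|X_{*j}\|/\sqrt n\le L$. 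Setting $\beta^0=\beta^*_{S_0}+\tilde\beta$ on $S^0=S_0\cup\mathrm{supp}(\tilde\beta)$ gives $s^0\le s^*+t$ and bias exactly equal to the Maurey error.

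Choosing $t\asymp s^*$ and plugging in the weak-$\ell_q$ tail estimate $\|\beta^*_{S_0^c}\|_1\le\sum_{j>s^*}(k/j)^{1/q}\lesssim k^{1/q}(s^*)^{1-1/q}$ (valid for $q\in(0,1)$; a logarithmic variant handles $q=1$) reduces the bias to a constant multiple of $L^2 n\,k^{2/q}(s^*)^{1-2/q}$. The final step is to invoke the maximality defining $s^*$: when $s^*<p$, the index $s^*+1$ must violate the defining inequality for $x^*$, forcing $k^{2/q}n\asymp(s^*)^{2/q}\log(ep/s^*)$, which gives
\[
L^2 n\,k^{2/q}(s^*)^{1-2/q}\lesssim L^2\,(s^*)^{2/q}\log(ep/s^*)\cdot(s^*)^{1-2/q}=L^2\, s^*\log(ep/s^*),
\]
as required. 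The boundary case $s^*=p$ is trivial (take $\beta^0=\beta^*$, so the bias is zero).

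The main obstacle will be the sharp control of the bias at the order $s^*\log(ep/s^*)$: the naive hard-thresholding bound $\|X\beta^*_{S_0^c}\|^2\le L^2 n\|\beta^*_{S_0^c}\|_1^2$, combined with the weak-$\ell_q$ tail, is too loose by a factor of $s^*$ under mere column normalization, and it is the Maurey averaging that redistributes the $\ell_1$ mass across a fresh support of size $\asymp s^*$ and recovers the correct rate. Once the oracle is in place the rest of the proof is direct substitution into (\ref{eq:main2}) and the defining relation for $s^*$; the claimed uniformity over $\mathcal{B}_q(k)$ is automatic because every bound depends on $\beta^*$ only through $k$.
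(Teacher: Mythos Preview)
Your proposal is correct and follows essentially the same route as the paper's proof: apply the oracle inequality (\ref{eq:main2}) of Theorem~\ref{thm:main} with the sparse-regression prior, construct an $s^*$-sparse oracle via the Maurey argument to bound the bias by $L^2 n\,k^{2/q}(s^*)^{1-2/q}$, and conclude using the defining relation for the effective sparsity $s^*$. The only cosmetic difference is that the paper cites the Maurey bound directly (Lemma~7.2 of \cite{tsybakov2014aggregation}) to obtain an $s^*$-sparse oracle in one stroke, whereas you unpack it as hard thresholding on the top $s^*$ coordinates followed by Maurey averaging on the tail, yielding sparsity at most $2s^*$; since $2s^*\log(ep/(2s^*))\asymp s^*\log(ep/s^*)$ this changes only constants.
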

With $s^*$ being the effective sparsity, the posterior rate of contraction has the same form as that of Corollary \ref{cor:slr-pred}. The rate is known to be minimax optimal \citep{donoho1994minimax,raskutti2011minimax}. In the special case when $k\leq p^{1-\eta}\left(\frac{\log p}{n}\right)^{q/2}$ for some constant $\eta\in(0,1)$, the rate has an explicit formula in terms of $k$, which is $\frac{s^*\log(ep/s^*)}{n}\asymp k\left(\frac{\log p}{n}\right)^{1-q/2}$. When $X$ is an identity matrix, Corollary \ref{cor:slrq-pred} reduces to the results for sparse Gaussian sequence model in \cite{castillo2012needles}. Besides the prediction error, estimation error under approximate sparsity can be derived in the same way as Corollary \ref{cor:slr-esti}. Finally, we remark that in practice, the assumption $\max_{j\in[p]}n^{-1/2}\norm{X_{*j}}\leq L$ can be met by column normalization of the design matrix.

\subsection{Wavelet estimation in Besov space}\label{sec:wavelet}

In this section, we apply the general prior distribution in Section \ref{sec:prior} to establish optimal Bayes wavelet estimation under Besov space. Assume the data is generated as
\begin{equation}
Y_{jk}=\theta^*_{jk}+\frac{1}{\sqrt{n}}W_{jk},\quad k=1,...,2^j;\quad j=0,1,2,...,\label{eq:sequence}
\end{equation}
where $\{W_{jk}\}$ are i.i.d. $N(0,1)$ variables. It is well known that the sequence model is equivalent to Gaussian white noise model \cite{johnstone2011gaussian}, and it is closely related to nonparametric regression and density estimation \citep{brown1996asymptotic,nussbaum1996asymptotic}. Under a wavelet basis, $\{\theta_{jk}\}$ are understood as wavelet coefficients. We assume the true signal $\theta^*=\{\theta_{jk}^*\}$ belongs to the Besov ball defined by
\begin{equation}
\Theta_{p,q}^{\alpha}(L) = \left\{\theta: \sum_j2^{ajq}\norm{\theta_{j*}}_p^q\leq L^q\right\}\label{eq:besov}
\end{equation}
for some $p,q,\alpha,L>0$ and $a=\alpha+\frac{1}{2}-\frac{1}{p}$. The Besov ball (\ref{eq:besov}) naturally induces a multi-resolution structure of the signal. This inspires us to use a sparse prior distribution independently at each resolution level. That is, we consider a prior distribution $\Pi$ on $\theta$ satisfying
$$\Pi(d\theta)=\prod_j\Pi_j(d\theta_{j*}).$$
The prior distribution $\Pi_j$ on the $j$th level for $j< \log_2n$ is specified as follows:
\begin{enumerate}
\item Sample $s_j\sim\pi$ from $[2^j]$, where $\pi(s_j)\propto \frac{\Gamma(s_j)}{\Gamma(s_j/2)}\exp\left(-Ds_j\log\frac{e2^j}{s_j}\right)$;
\item Conditioning on $s_j$, sample $S_j$ uniformly from $\{S_j\subset[2^j]: |S_j|=s_j\}$;
\item Conditioning on $(s_j,S_j)$, sample $\theta_{jS_j}\sim f_{s_j,S_j,\lambda}$ with $f_{s_j,S_j,\lambda}(\theta_{jS_j})\propto e^{-\lambda \sqrt{n}\norm{\theta_{jS_j}}}$ and set $\theta_{jS_j^c}=0$.
\end{enumerate}
For $j\geq \log_2n$, let $\Pi_j(\theta_{j*}=0)=1$. Using Theorem \ref{thm:main} at each resolution level, we are able to establish the posterior contraction rate in the following corollary.
\begin{corollary}\label{cor:wavelet}
For any costants $p,q,\alpha$ satisfying $0< p,q\leq \infty$, $L>0$ and $\alpha\geq\frac{1}{p}$ and any constant $\lambda>0$, there exists some constant $D_{\lambda}$ only depending on $\lambda$ such that
$$\sup_{\theta^*\in\Theta_{p,q}^{\alpha}(L)}\mathbb{E}_{\theta^*}\Pi\left(\norm{\theta-\theta^*}^2>Mn^{-\frac{2\alpha}{2\alpha+1}}\Big|Y\right)\leq\exp\left(-C'n^{\frac{1}{2\alpha+1}}/\log n\right)$$
for any $D>D_{\lambda}$ with some constants $M,C'$ only depending on $\lambda,D,\alpha,p,L$.
\end{corollary}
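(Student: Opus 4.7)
The plan exploits the product structure of the model: since the noise $n^{-1/2}W_{jk}$ is independent across $(j,k)$ and $\Pi=\prod_j\Pi_j$, the posterior also factorizes as $\Pi(d\theta|Y)=\prod_j\Pi_j(d\theta_{j*}|Y_{j*})$. I would analyse each resolution level separately and then combine. For $j\geq\log_2 n$ the prior forces $\theta_{j*}=0$, and the assumption $\alpha\geq 1/p$ together with the Besov constraint makes the tail contribution $\sum_{j\geq\log_2 n}\|\theta_{j*}^*\|^2$ of order at most $n^{-2\alpha}$, negligible compared with $n^{-2\alpha/(2\alpha+1)}$.

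For each $j<\log_2 n$, the level-$j$ subproblem is a sparse Gaussian sequence problem on $\mathbb{R}^{2^j}$ with noise standard deviation $n^{-1/2}$. After rescaling by $\sqrt{n}$, it falls into the sparse linear regression framework of Section \ref{sec:slr} with identity design of size $2^j$. Theorem \ref{thm:main} then gives, for any oracle sparsity $s_j^*\in\{0,\ldots,2^j\}$ and support $S_j^*\subset[2^j]$ of size $s_j^*$ with the optimal oracle value $\theta_{jS_j^*}^*$,
\[
\mathbb{E}\Pi_j\!\left(\|\theta_{j*}-\theta_{j*}^*\|^2>(1+\delta_2)\|\theta_{j,S_j^{*c}}^*\|^2+\tfrac{M}{n}\,s_j^*\log\tfrac{e\,2^j}{s_j^*}\,\Big|\,Y\right)\leq\exp\!\left(-C'\bigl(s_j^*\log\tfrac{e\,2^j}{s_j^*}+n\|\theta_{j,S_j^{*c}}^*\|^2\bigr)\right).
\]
I would then pick $s_j^*$ to match the optimal Besov bias-variance tradeoff: $s_j^*=2^j$ for $j\leq j_*\asymp\frac{1}{2\alpha+1}\log_2 n$ (bias vanishes, variance $2^j/n$), and the largest-$s_j^*$-entries support for $j_*<j<\log_2 n$. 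The standard Besov bias-variance computation --- splitting into the dense case $p\geq 2$, where $\|\theta_{j*}^*\|_2^2\lesssim 2^{-2j\alpha}$ follows from the embedding $\ell_p\hookrightarrow\ell_2$ at each level, and the sparse case $p<2$, where the best $s$-term approximation error is controlled via weak-$\ell_p$ bounds --- then shows $\sum_{j<\log_2 n}E_j\lesssim n^{-2\alpha/(2\alpha+1)}$, giving the stated rate.

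The central obstacle is the probability bound $\exp(-C'n^{1/(2\alpha+1)}/\log n)$. A direct union bound of the per-level exponential bounds gives $\sum_{j<\log_2 n}\exp(-C'\text{stuff}_j)$; at levels near $j_*$ the exponent $\text{stuff}_j$ is already of order $n^{1/(2\alpha+1)}$, but at very low levels both the complexity $s_j^*\log(e2^j/s_j^*)$ and $n\|\theta_{j*}^*\|^2$ may be tiny, so Theorem \ref{thm:main} alone yields only $\exp(-O(1))$ there. To reach the claimed exponent I would, for very low levels, not pass through Theorem \ref{thm:main} but instead use the decomposition $\|\theta_{j*}-\theta_{j*}^*\|^2\leq 2\|\theta_{j*}-Y_{j*}\|^2+2\|W_{j*}\|^2/n$, controlling the noise part by a chi-squared tail bound on the aggregate $\sum_{j\leq j_*}\|W_{j*}\|^2$ (whose total dimension is $O(n^{1/(2\alpha+1)})$) and the posterior deviation part directly from the explicit posterior form at level $j$, both giving exponential bounds of the right order. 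The union bound over the $O(\log n)$ level-wise failure events introduces the additional $\log n$ factor that appears in the denominator of the exponent. The hardest part will be handling the sparse regime $p<2$, where the bias term requires careful weak-$\ell_p$ tail computations to identify the optimal oracle supports $S_j^*$ uniformly over $\Theta_{p,q}^\alpha(L)$ while preserving the tight probability bound.
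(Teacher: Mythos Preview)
Your overall architecture---factorize the posterior across resolution levels, apply the sparse-sequence oracle inequality levelwise, handle the tail $j\geq\log_2 n$ via the Besov constraint and $\alpha\geq 1/p$, then sum the per-level error budgets---matches the paper exactly. You also correctly identify the real difficulty: at low resolution levels the exponent coming straight out of Theorem~\ref{thm:main} can be $O(1)$, so a naive union bound does not yield $\exp(-C'n^{1/(2\alpha+1)}/\log n)$.

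Where you diverge is in the fix. The paper does \emph{not} treat low levels separately via a chi-square/posterior-deviation argument. Instead it observes (equation~\prettyref{eq:main3} in the paper) that the proof of Theorem~\ref{thm:main} actually gives a slightly stronger statement: for any $t\geq 0$,
\[
\mathbb{E}\Pi\!\left(\norm{\X_Z(Q)-\theta^*}^2>(1+\delta_2)\norm{\X_{Z^*}(Q^*)-\theta^*}^2+M\epsilon(\Z_{\tau^*})+t\,\Big|\,Y\right)\leq \exp\!\left(-C''\bigl(\epsilon(\Z_{\tau^*})+\norm{\X_{Z^*}(Q^*)-\theta^*}^2+t\bigr)\right).
\]
Taking $t=n^{1/(2\alpha+1)}/\log_2 n$ at every level $j<\log_2 n$ immediately forces every per-level failure probability down to $\exp(-C''n^{1/(2\alpha+1)}/\log_2 n)$, and the $\log_2 n$ levels are swallowed by the same factor. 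This buys the stated exponent with no case analysis. Your proposed workaround, by contrast, has a gap: bounding $\norm{\theta_{j*}-Y_{j*}}^2$ under the posterior is not ``direct from the explicit posterior form'', because conditional on a selected support $S_j$ you have $\theta_{jS_j^c}=0$ and hence $\norm{\theta_{j*}-Y_{j*}}^2\geq\norm{Y_{jS_j^c}}^2$, which can contain large signal components unless you first argue that the posterior selects (near-)full support at low levels. That is an extra model-selection step you have not supplied, and it is precisely the kind of argument Theorem~\ref{thm:main} already packages.

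A second, smaller difference: for the bias at each level the paper invokes Johnstone's control function $r_{2^j,p}(L2^{-aj},n^{-1/2})$ (Theorems~11.7 and~12.1 of \cite{johnstone2011gaussian}) as a single device covering both $p\geq 2$ and $p<2$, rather than your hand-split into dense/sparse regimes with explicit weak-$\ell_p$ computations. Your route would work, but the control-function shortcut avoids the ``hardest part'' you flag.
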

The result of Corollary \ref{cor:wavelet} can be regarded as a Bayes version of Theorem 12.1 of \cite{johnstone2011gaussian} under the same condition. The rate $n^{-\frac{2\alpha}{2\alpha+1}}$ is minimax optimal over the class $\Theta_{p,q}^{\alpha}(L)$. Posterior contraction for (\ref{eq:sequence}) over the class $\Theta_{p,q}^{\alpha}(L)$ has been investigated by \cite{van2008rates,rivoirard2012posterior,gao2013adaptive,hoffmann2013adaptive} only for a restricted configuration of $(p,q,\alpha)$. In comparison, Corollary \ref{cor:wavelet} obtains adaptive optimal posterior contraction rates to all possible combinations of $(p,q,\alpha)$ considered in the frequentist literature \citep{johnstone2011gaussian}.

When $p=q=2$, the class $\Theta_{p,q}^{\alpha}(L)$ is equivalent to a Sobolev ball. It is worth noting that in this case the prior distribution can be greatly simplified. Let us recast (\ref{eq:sequence}) into the sequence model with single index. That is, consider data generated by
$$Y_j=\theta^*_j+\frac{1}{\sqrt{n}}W_j,\quad j=1,2,3,...,$$
with $\{W_j\}$ being i.i.d. $N(0,1)$ variables. Assume the true signal $\theta^*=\{\theta_j^*\}$ belongs to the Sobolev ball defined by
$$\mathcal{S}_{\alpha}(L)=\left\{\theta: \sum_ja_j^2\theta_j^2\leq L^2\right\},$$
for some sequence $a_j\asymp j^{\alpha}$.
We use the following version of the general prior $\Pi$ in Section \ref{sec:prior}.
\begin{enumerate}
\item Sample $k\sim \pi$ from $[n]$, where $\pi(k)\propto\frac{\Gamma(k)}{\Gamma(k/2)}\exp\left(-Dk\right)$;
\item Conditioning on $k$, sample $\theta_{[k]}=(\theta_1,...,\theta_k)\sim f_{k,\lambda}$ with $f_{k,\lambda}(\theta_{[k]})\propto e^{-\lambda\sqrt{n}\norm{\theta_{[k]}}}$ and set $\theta_j=0$ for all $j>k$.
\end{enumerate}
Note that the prior distribution has a missing step compared with the general prior in Section \ref{sec:prior}, since $\Z_k=\{[k]\}$ is a singleton set and we do not need to perform a further model selection. Specializing Theorem \ref{thm:main} to this case, we obtain the following result.
\begin{corollary}\label{cor:sobolev}
For any constants $\alpha,L>0$ and any constant $\lambda>0$, there exists some constant $D_{\lambda}$ only depending on $\lambda$ such that
$$\sup_{\theta^*\in\mathcal{S}_{\alpha}(L)}\mathbb{E}_{\theta^*}\Pi\left(\norm{\theta-\theta^*}^2>Mn^{-\frac{2\alpha}{2\alpha+1}}\Big|Y\right)\leq\exp\left(-C'n^{\frac{1}{2\alpha+1}}\right)$$
for any $D>D_{\lambda}$ with some constants $M,C'$ only depending on $\lambda,D,\alpha,L$.
\end{corollary}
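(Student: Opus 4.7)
The plan is to view Corollary~\ref{cor:sobolev} as a direct specialization of the oracle inequality \eqref{eq:main2} of Theorem~\ref{thm:main}, combined with the standard bias--variance balance for Sobolev classes. First I would rescale by $\sqrt n$: setting $\tilde Y_j=\sqrt n\,Y_j$ and $\tilde\theta_j=\sqrt n\,\theta_j$, the model becomes $\tilde Y=\tilde\theta^*+W$ with $W\sim N(0,I)$ (hence sub-Gaussian with $\rho=1$), and the change of variables turns the density $\propto e^{-\lambda\sqrt n\,\|\theta_{[k]}\|}$ into $\propto e^{-\lambda\|\tilde\theta_{[k]}\|}$. This is the elliptical Laplace prior with $\X_Z$ the coordinate embedding $\mathbb{R}^k\hookrightarrow\mathbb{R}^N$, so $\X_Z^T\X_Z=I_k$ and $\det(\X_Z^T\X_Z)=1>0$. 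The general framework of Section~\ref{sec:model} is then satisfied with $\tau=k$, $\T=[n]$, $\Z_k=\{[k]\}$ a singleton (so Step~2 of the prior is vacuous, matching the simplified prior in the statement), $\ell(\Z_k)=k$, and $\epsilon(\Z_k)=k$; the conditions \eqref{eq:larger} and \eqref{eq:capacity} are immediate.

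Next I would apply \eqref{eq:main2} with oracle index $k^*:=\lceil n^{1/(2\alpha+1)}\rceil\in[n]$ (valid for $\alpha>0$) and oracle parameter $Q^*=\tilde\theta^*_{[k^*]}$, the truncation of $\tilde\theta^*$ to its first $k^*$ coordinates. The Sobolev constraint $\sum_j j^{2\alpha}(\theta_j^*)^2\le L^2$ yields the key bias estimate
\begin{equation*}
\|\X_{Z^*}(Q^*)-\tilde\theta^*\|^2=n\sum_{j>k^*}(\theta_j^*)^2\le n L^2 (k^*)^{-2\alpha}\le L^2\, n^{1/(2\alpha+1)},
\end{equation*}
which is of the same order as $\epsilon(\Z_{k^*})=k^*\asymp n^{1/(2\alpha+1)}$. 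Hence \eqref{eq:main2} gives, for sufficiently small $\delta\in(0,1)$ and any $D>D_{\lambda,\delta,1}$,
\begin{equation*}
\mathbb{E}\Pi\bigl(\|\tilde\theta-\tilde\theta^*\|^2>M'\,n^{1/(2\alpha+1)}\,\big|\,\tilde Y\bigr)\le \exp\bigl(-C'\,n^{1/(2\alpha+1)}\bigr),
\end{equation*}
and translating back via $\|\tilde\theta-\tilde\theta^*\|^2=n\|\theta-\theta^*\|^2$ produces the claimed rate $n^{-2\alpha/(2\alpha+1)}$ and exponential tail.

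The one technicality worth dwelling on is the infinite-dimensional ambient space: Theorem~\ref{thm:main} is stated for $Y\in\mathbb{R}^N$, whereas here $\{Y_j\}$ is an infinite sequence. Since the prior is supported on sequences with $\theta_j=0$ for $j>n$, I would apply the theorem with $N=n$ to control $\sum_{j\le n}(\theta_j-\theta_j^*)^2$, and then separately absorb the deterministic residual $\sum_{j>n}(\theta_j^*)^2\le L^2 n^{-2\alpha}\le L^2 n^{-2\alpha/(2\alpha+1)}$ into $\|\theta-\theta^*\|^2$. The main obstacle is therefore primarily bookkeeping---tracking the rescaling and verifying that the constants $M',C'$ inherited from Theorem~\ref{thm:main} depend only on $\lambda,\delta,D$ (and through $L$ on the Sobolev radius), not on the truncation level $N$; once this is confirmed, no new analytic difficulty arises.
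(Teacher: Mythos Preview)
Your proposal is correct and follows essentially the same route as the paper: cast the Sobolev sequence model into the general framework with $\tau=k$, $\Z_k=\{[k]\}$, $\ell(\Z_k)=k$, apply the oracle inequality \eqref{eq:main2} with $k^*=\lceil n^{1/(2\alpha+1)}\rceil$ and oracle parameter the truncation of $\theta^*$, and balance the Sobolev tail bias $L^2(k^*)^{-2\alpha}$ against the complexity $k^*$. Your explicit handling of the infinite-dimensional ambient space (truncating to $N=n$ and absorbing $\sum_{j>n}(\theta_j^*)^2\le L^2 n^{-2\alpha}$) is a detail the paper glosses over, and your choice $\epsilon(\Z_k)=k$ versus the paper's $\epsilon(\Z_k)=2k$ is immaterial.
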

Thus, we have obtained rate-optimal adaptive posterior contraction over the Sobolev ball through a very simple prior distribution. 

To close this section, we remark that the prior distributions used in this section depend on $n$. This is a consequence of writing the Gaussian sequence model in the form of structured linear models. For adaptive priors that do not depend on $n$ but still achieve optimal posterior contraction rates, we refer the readers to \cite{gao2013adaptive}.

\section{More results on sparse linear regression}\label{sec:more}

In this section, we provide some further results on posterior contraction rates for linear regression under the $\ell_{\infty}$ norm $\norm{\cdot}_{\infty}$. First, let us consider the sparse linear regression setting $Y=X\beta+W$ in Section \ref{sec:slr}. Convergence under the $\ell_{\infty}$ norm requires stronger assumptions than convergence under the $\ell_2$ norm. Following \cite{donoho2006stable,lounici2008sup}, we assume the mutual coherence condition:
\begin{equation}
n^{-1}X_{*j}^TX_{*j}=1\text{ for all }j\in[p]\quad\text{and}\quad \max_{j\neq k}n^{-1}X_{*j}^TX_{*k}\leq \tau.\label{eq:coherence}
\end{equation}
Assuming that data is generated by $Y=X\beta^*+W$ for some regression coefficient $\beta^*$ with sparsity $s^*$ and some noise vector $W$ satisfying (\ref{eq:subG}), the posterior contraction under the $\ell_{\infty}$ norm for the prior distribution specified in Section \ref{sec:slr} is given in the following theorem.
\begin{thm}\label{thm:sup}
For any $\tau>0$ and any $\beta^*$ with sparsity $s^*$ satisfying $\tau s^*\leq 1/9$ and any constants $\lambda,\rho>0$, there exists some constant $D_{\lambda,\rho}>0$ only depending on $\lambda,\rho$ such that
$$\mathbb{E}_{X\beta^*}\Pi\left(\norm{\beta-\beta^*}_{\infty}>M\sqrt{\frac{\log p}{n}}\Big|Y\right)\leq p^{-C'}$$
for any constant $D>D_{\lambda,\rho}$ with some constants $M,C'$ only depending on $\lambda,\rho,D$.
\end{thm}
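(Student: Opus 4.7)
The plan is to combine the posterior contraction established in Corollaries~\ref{cor:slr-pred} and~\ref{cor:slr-esti} with the algebraic structure imposed by mutual coherence. Under $\tau s^*\le 1/9$, every vector $h$ supported on a set of size at most $(2+\delta)s^*$ satisfies $\|Xh\|^2\ge n\bigl(1-((2+\delta)s^*-1)\tau\bigr)\|h\|^2\ge \tfrac{7}{9}n\|h\|^2$, so both the restricted eigenvalue $\kappa_2$ and the compatibility constant $\kappa_1$ of~(\ref{eq:RE21}) are bounded below by a universal constant. Applying Corollaries~\ref{cor:slr-pred}~and~\ref{cor:slr-esti} then yields, with posterior probability $\ge 1-\exp(-cs^*\log(ep/s^*))$, the three estimates $|\supp(\beta)|\le(1+\delta)s^*$, $\|\beta-\beta^*\|^2\lesssim s^*\log(ep/s^*)/n$, and $\|\beta-\beta^*\|_1\lesssim s^*\sqrt{\log(ep/s^*)/n}$.

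Next I would exploit a Lounici-type coordinate identity. For $h=\beta-\beta^*$, expanding $(n^{-1}X^\top Xh)_j=h_j+\sum_{k\ne j}(n^{-1}X_j^\top X_k)h_k$ and using $|n^{-1}X_j^\top X_k|\le\tau$ gives $(1+\tau)|h_j|\le |(n^{-1}X^\top Xh)_j|+\tau\|h\|_1$. Substituting $X^\top Xh=X^\top W-X^\top(Y-X\beta)$, one obtains
\[
\|h\|_\infty \le \|n^{-1}X^\top W\|_\infty + \|n^{-1}X^\top(Y-X\beta)\|_\infty + \tau\|h\|_1.
\]
The term $\tau\|h\|_1\le \tau s^*\cdot C\sqrt{\log p/n}\le \frac{C}{9}\sqrt{\log p/n}$ by Step~1 together with $\tau s^*\le 1/9$, and $\|n^{-1}X^\top W\|_\infty\lesssim\sqrt{\log p/n}$ with probability $1-p^{-c}$ by~(\ref{eq:subG}) and a union bound over $j\in[p]$. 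The entire problem therefore reduces to bounding the posterior-averaged residual $\|n^{-1}X^\top(Y-X\beta)\|_\infty$.

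For this residual I would condition on $S=\supp(\beta)$ and change variables via $\eta:=X_S\beta_S-P_SY$, with $P_S$ the orthogonal projector onto $\Span(X_S)$. The conditional posterior of $\eta$ has density proportional to $\exp(-\tfrac12\|\eta\|^2-\lambda\|P_SY+\eta\|)$, which is a $\lambda$-Lipschitz tilt of a standard Gaussian on $\Span(X_S)$ and hence sub-Gaussian with proxy essentially $1$. For $j\in S$, $X_j\in\Span(X_S)$ yields $X_j^\top(I-P_S)=0$, so $n^{-1}X_j^\top(Y-X\beta)=-n^{-1}X_j^\top\eta$, a linear functional with sub-Gaussian proxy $\|X_j\|^2/n^2=1/n$; a union bound over $j\in S$ (using $|S|\le p$) gives $\max_{j\in S}|n^{-1}X_j^\top(Y-X\beta)|\lesssim\sqrt{\log p/n}$ with posterior probability $\ge 1-p^{-C}$. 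For $j\in\supp(\beta^*)\setminus S$ one instead writes $n^{-1}X_j^\top(Y-X\beta)=n^{-1}X_j^\top(I-P_S)Y-n^{-1}X_j^\top\eta$; mutual coherence forces $n^{-1}\|P_SX_j\|^2\le 2s\tau^2\le 2/9$, the $(I-P_S)W$ piece is again $O(\sqrt{\log p/n})$, and the deterministic piece $n^{-1}X_j^\top(I-P_S)X\beta^*$ isolates $|\beta^*_j|$ with coefficient $n^{-1}\|(I-P_S)X_j\|^2\ge 7/9$, the remaining off-diagonal terms being bounded via $|n^{-1}X_j^\top X_k|\le\tau$ and the $\ell_1$ control from Step~1.

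The delicate point, and what I expect to be the main obstacle, is closing this last case without any beta-min hypothesis: the residual estimate must be fed back into the Lounici inequality and the bookkeeping around the cross-term $\tau\|\beta^*\|_1$ (which is not a priori bounded) handled carefully, exploiting both the model-size control and the fact that a large missed coordinate would violate the prediction bound of Step~1. Once this is done, combining all three bounds and choosing $M$ sufficiently large produces the $p^{-C'}$ tail; the expectation over $Y$ absorbs the high-probability events for $W$, and the union bound over the posterior-charged supports is comfortably absorbed by the exponential complexity weight $\exp(-Ds\log(ep/s))$ in the prior.
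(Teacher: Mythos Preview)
Your overall plan has real merit: leveraging Corollaries~\ref{cor:slr-pred}--\ref{cor:slr-esti} for dimension, $\ell_2$ and $\ell_1$ control, and then passing to $\ell_\infty$ via the Lounici coordinate identity is natural, and your treatment of coordinates $j\in S=\supp(\beta)$ through the sub-Gaussian behaviour of $\eta=X_{*S}\beta_S-P_SY$ essentially matches the MGF computation the paper carries out around display~(\ref{eq:sup-from}).

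The gap is in the case $j\in S^*\setminus S$, which you yourself flag as delicate. For such $j$ the identity is circular: $h_j=-\beta_j^*$, while the ``residual'' term you isolate satisfies
\[
n^{-1}X_{*j}^T(I-P_S)X\beta^*
=\Bigl(n^{-1}\norm{(I-P_S)X_{*j}}^2\Bigr)\beta_j^* + \text{(coherence-small cross terms)}
\approx \beta_j^*,
\]
so the inequality collapses to $|\beta_j^*|\lesssim|\beta_j^*|+O(\sqrt{\log p/n})$ and yields nothing. Your proposed escape---``a large missed coordinate would violate the prediction bound''---does not close the gap: the prediction or $\ell_2$ bound $\norm{h}^2\lesssim s^*\log(ep/s^*)/n$ only rules out a missed coordinate with $|\beta_j^*|\gtrsim\sqrt{s^*\log p/n}$, leaving the entire range $\sqrt{\log p/n}\lesssim|\beta_j^*|\lesssim\sqrt{s^*\log p/n}$ uncontrolled. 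The $\ell_1$ bound is likewise too weak here, since $\tau\norm{h}_1$ is small but $\norm{h}_\infty$ need not be.

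What is genuinely missing is a \emph{signal-capture} statement: the posterior puts negligible mass on supports $S$ that omit any $j$ with $|\beta_j^*|>C\sqrt{\log p/n}$. This is not a consequence of the general contraction results of Section~\ref{sec:main}. The paper obtains it by a direct comparison of posterior weights: for such $j\notin S$ one shows
\[
\frac{\omega(S)}{\omega(S\cup\{j\})}
\;\le\;
\frac{\sqrt{\pi}}{\lambda}(ep)^{2D+1}
\exp\Bigl(\lambda\norm{P_FY}-\tfrac12\norm{P_FY}^2\Bigr)
\;\le\;
p^{-c},
\]
where $P_F$ projects onto the one-dimensional complement of $\Span(X_{*S})$ in $\Span(X_{*S\cup\{j\}})$; the exponent is controlled because $\norm{P_FX\beta^*}\gtrsim\sqrt{n}|\beta_j^*|$ under mutual coherence. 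An induction over the missed large coordinates then bounds $\sum_{\norm{\beta^*_{S^c}}_\infty>C\sqrt{\log p/n}}\omega(S)$. You should either import this ratio argument, or replace your circular step for $j\in S^*\setminus S$ by some other device that controls $\Pi(\exists j\in S^*\setminus S:\,|\beta_j^*|>C\sqrt{\log p/n}\mid Y)$ at the level $p^{-C'}$.
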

The result of convergence under the $\ell_{\infty}$ norm is obtained under the assumption $\tau s^*\leq 1/9$. Such assumption was also made in \cite{donoho2006stable,bunea2008consistent,lounici2008sup,castillo2014}. It implies the restricted eigenvalue $\kappa_2$ defined in (\ref{eq:RE21}) to be bounded away from $0$ \citep{zhang2008sparsity}.
The convergence rate $\sqrt{\frac{\log p}{n}}$ is optimal under the $\ell_{\infty}$ norm. Moreover, with a standard minimal signal strength assumption, Theorem \ref{thm:sup} immediately implies model selection consistency under the posterior distribution.

While the optimal convergence result for $\ell_{\infty}$ norm is well known in the frequentist literature for sparse linear regression, an analogous result for regression with group sparsity is not stated in literature. We provide a Bayes solution to this problem. For simplicity of presentation, we consider the case of identity design $Y=B+W\in\mathbb{R}^{p\times m}$. The result for the case of a more general design can be derived in a similar way. For any subset $T\subset[p]\times[m]$, let $r(T)=\{i\in[p]:(\{i\}\times[m])\cap T\neq \varnothing\}$ denote the the rows selected by the set $T$. The prior $\Pi$ we use is defined through the following sampling procedure:
\begin{enumerate}
\item Sample $T\sim \pi$ in $\{T: T\subset[p]\times[m]\}$ with
\begin{equation}
\pi(T)\propto\frac{\Gamma(|T|)}{\Gamma(|T|/2)}\exp\left(-D\left(m|r(T)|+|r(T)|\log\frac{ep}{|r(T)|}+|T|\log\frac{em|r(T)|}{|T|}\right)\right);\label{eq:2l}
\end{equation}
\item Conditioning on $T$, sample $B_T\sim f_{T,\lambda}$ with $f_{T,\lambda}(B_T)\propto e^{-\lambda\sqrt{\sum_{(i,j)\in T}B_{ij}^2}}$ and set $B_{T^c}=0$.
\end{enumerate}
Compared with the prior distribution specified in Section \ref{sec:group}, the model selection step for the above prior has a two-level structure. Apart from the correction factor $\frac{\Gamma(|T|)}{\Gamma(|T|/2)}$, the probability mass (\ref{eq:2l}) can be viewed as the product of $e^{-D|S|\left(m+\log\frac{ep}{|S|}\right)}$ and $e^{-D|T|\log\frac{em|S|}{|T|}}$ with $S=r(T)$ denoting the row support. Therefore, (\ref{eq:2l}) can be understood as first picking a row support $S$, and then further selecting a finer support from $S\times[m]$. In comparison, the prior specified in Section \ref{sec:group} does not have the second step. While it only produces $B$ with support in the form of $S\times[m]$ for some $S$, (\ref{eq:2l}) can give an arbitrary support $T$, which is critical to obtain optimal convergence rate under the $\ell_{\infty}$ loss. Assume that the data is generated from $Y=B^*+W$ for some $B^*$ with row support $S^*$ and noise matrix $W$ satisfying (\ref{eq:subG}). The posterior contraction rate is given in the following theorem.

\begin{thm}\label{thm:sup-g}
For any $B^*$ with row support $S^*$ and sparsity $s^*=|S^*|$, any arbitrarily small constant $\delta>0$ and any constants $\lambda,\rho>0$, there exists some constant $D_{\lambda,\delta,\rho}>0$ only depending on $\lambda,\delta,\rho$ such that
\begin{equation}
\mathbb{E}_{B^*}\Pi\left(|r(T)|>(1+\delta)s^*\Big|Y\right)\leq \exp\left(-C's^*\left(m+\log\frac{ep}{s^*}\right)\right),\label{eq:sup-g-dim}
\end{equation}
\begin{equation}
\mathbb{E}_{B^*}\Pi\left(\fnorm{B-B^*}^2>Ms^*\left(m+\log\frac{ep}{s^*}\right)\Big|Y\right)\leq \exp\left(-C''s^*\left(m+\log\frac{ep}{s^*}\right)\right)\label{eq:sup-g-2}
\end{equation}
and
\begin{equation}
\mathbb{E}_{B^*}\Pi\left(\norm{B-B^*}_{\infty}>M\sqrt{\log(p+m)}\Big|Y\right)\leq (pm)^{-C'''}\label{eq:sup-g-inf}
\end{equation}
for any constant $D>D_{\lambda,\delta,\rho}$ with some constants $M,C',C'',C'''$ only depending on $\lambda,\delta,\rho,D$.
\end{thm}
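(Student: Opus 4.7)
The plan is to recast (\ref{eq:sup-g-dim}) and (\ref{eq:sup-g-2}) in the structured-linear-model framework of Theorem \ref{thm:main}, and to handle the $\ell_\infty$ bound (\ref{eq:sup-g-inf}) by a separate entrywise Bayes-factor argument. Under identity design, set $\tau=(s,t)\in[p]\times[pm]$, $\Z_{s,t}=\{T\subset[p]\times[m]:|r(T)|=s,\,|T|=t\}$, $\ell(\Z_{s,t})=t$, and let $\X_T:\mathbb{R}^{t}\hookrightarrow\mathbb{R}^{p\times m}$ be the coordinate embedding on $T$; since $\X_T^\top\X_T=I_t$, the elliptical Laplace density collapses to $\propto\exp(-\lambda\|B_T\|)$, and absorbing the uniform $1/|\Z_{s,t}|$ factor into the exponent aligns the joint prior on $T$ with (\ref{eq:2l}). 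Taking $\epsilon(\Z_{s,t})\asymp ms+s\log(ep/s)+t\log(esm/t)$, the estimate $\log|\Z_{s,t}|\leq s\log(ep/s)+t\log(esm/t)$ combined with $ms\geq t$ verifies (\ref{eq:larger}), while strict monotonicity of $\epsilon$ in each of $s,t$ verifies (\ref{eq:capacity}).

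Picking the oracle $T^*=\{(i,j):B^*_{ij}\neq 0\}\subset S^*\times[m]$, the model is well specified and $\epsilon(\Z_{\tau^*})\lesssim s^*(m+\log(ep/s^*))$, because $|T^*|\log(es^*m/|T^*|)\leq s^*m$. Corollary \ref{cor:main} then yields (\ref{eq:sup-g-2}) directly from the $\|\X_Z(Q)-\theta^*\|^2$ bound with $\X_T=\mathrm{id}$. For (\ref{eq:sup-g-dim}) the coarse inequality $m|r(T)|\leq\epsilon(\Z_\tau)\leq(1+\delta')\epsilon^*$ only yields $|r(T)|\lesssim s^*$, because the entrywise term $|T|\log(em|r(T)|/|T|)$ can inflate $\epsilon^*$ by a constant factor over $ms^*$; to sharpen this to $(1+\delta)s^*$ I revisit the Bayes-factor bookkeeping used inside the proof of Theorem \ref{thm:main}, observing that among configurations with $|r(T)|>(1+\delta)s^*$ the prior penalty carries an excess factor $e^{-Dm(|r(T)|-s^*)}$ that dominates the likelihood contribution once $D$ is chosen large relative to $\delta^{-1}$ and $\rho^{-1}$.

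For the $\ell_\infty$ bound (\ref{eq:sup-g-inf}) the key structural feature of identity design is that conditionally on $T$, $B_{T^c}=0$ and $B_T$ has density $\propto\exp(-\tfrac12\|Y_T-B_T\|^2-\lambda\|B_T\|)$, a spherically tilted Gaussian with mean $O(\lambda\|B_T\|^{-1})$-close to $Y_T$. For any $(i,j)\in T$ this concentrates $B_{ij}$ within $O(1)$ of $Y_{ij}$, so the union bound $\max_{(i,j)}|W_{ij}|\lesssim\sqrt{\log(pm)}$ gives $|B_{ij}-B^*_{ij}|\lesssim\sqrt{\log(pm)}$ on that event. For $(i,j)\notin T$ one has $|B_{ij}-B^*_{ij}|=|B^*_{ij}|$, and I split by $|Y_{ij}|$: if $|Y_{ij}|\lesssim\sqrt{\log(pm)}$ then $|B^*_{ij}|\leq|Y_{ij}|+|W_{ij}|\lesssim\sqrt{\log(pm)}$ automatically, whereas if $|Y_{ij}|\gtrsim\sqrt{\log(pm)}$ I compare posterior masses on $T$ and $T':=T\cup\{(i,j)\}$, showing that the likelihood gain $e^{Y_{ij}^2/2}$ dominates the prior ratio induced by (\ref{eq:2l}). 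A union bound over $[p]\times[m]$ then completes (\ref{eq:sup-g-inf}).

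The main obstacle is this last Bayes-factor comparison. Adjoining a single entry to $T$ may change $|r(T)|$ whenever row $i$ was previously empty, so the prior ratio involves all three terms $m|r(T)|,\,|r(T)|\log(ep/|r(T)|),\,|T|\log(em|r(T)|/|T|)$ together with the correction factor $\Gamma(|T|)/\Gamma(|T|/2)$. The second-level penalty $|T|\log(em|r(T)|/|T|)$ is exactly what keeps the entrywise refinement inexpensive when the row is already occupied, and tracking its interaction with the elliptical-Laplace normalising constants (which are cancelled by the correction factor) to yield a uniform $\mathrm{polylog}(pm)$ prior ratio is the technical heart of the argument, and is the step that forces the two-level prior (\ref{eq:2l}) over the simpler single-level prior of Section \ref{sec:group}.
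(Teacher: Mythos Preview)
Your overall architecture matches the paper's: both (\ref{eq:sup-g-dim}) and (\ref{eq:sup-g-2}) come from posterior-ratio bounds of the $R(T)/R(T^*)$ type, and (\ref{eq:sup-g-inf}) comes from the Bayes-factor comparison $\omega(T)/\omega(T\cup\{(i,j)\})$ under a sup-noise event. The $\ell_\infty$ argument you sketch---split on $\|B^*_{T^c}\|_\infty$, MGF bound for $\|B_T-Y_T\|_\infty$, and the single-entry adjoin---is exactly the paper's route; the paper handles the prior ratio by the crude bound $\alpha(T)/\alpha(T')\le(epm)^{3D}$ together with the normalising-constant inequality $\mathcal{N}_{Y_T,\lambda}/\mathcal{N}_{Y_{T'},\lambda}\le C_\lambda e^{\lambda|Y_{ij}|}$, so your worry about tracking all three penalty terms separately is unnecessary.

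Where your plan diverges is in trying to force (\ref{eq:sup-g-dim})--(\ref{eq:sup-g-2}) through Theorem~\ref{thm:main} with a two-dimensional index $\tau=(s,t)$. This works for (\ref{eq:sup-g-2}), but as you yourself note it cannot deliver the sharp constant $(1+\delta)$ in (\ref{eq:sup-g-dim}), and your patch (``revisit the Bayes-factor bookkeeping'') amounts to redoing the proof of Theorem~\ref{thm:main} by hand. The paper skips the detour: it mimics the proof of Theorem~\ref{thm:main} directly, but defines the noise events $E_T,F_T$ using only the \emph{row-level} complexity $m|r(T)|+|r(T)|\log(ep/|r(T)|)$ rather than the full two-level $\epsilon$, and takes the oracle $T^*=S^*\times[m]$. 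Because the upper bound on $R(T)\mathbb{I}_{E_T}$ then carries the factor $e^{C(m|r(T)|+|r(T)|\log(ep/|r(T)|))}$ with no $|T|\log(ems/|T|)$ contamination, summing over $\{|r(T)|>(1+\delta)s^*\}$ gives (\ref{eq:sup-g-dim}) with the sharp constant immediately. The second-level penalty $e^{-D|T|\log(em|r(T)|/|T|)}$ is used only to control the inner sum $\sum_{T:r(T)=S}(\cdot)$, not in the noise events.

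Two smaller issues: (i) monotonicity of $\epsilon$ in each coordinate does \emph{not} verify (\ref{eq:capacity}) for a two-dimensional index---many $(s,t)$ can land in the same unit interval---though the weakened form $|\{\tau:\epsilon\le t\}|\le at^b$ mentioned after Theorem~\ref{thm:main} would suffice; (ii) ``absorbing $1/|\Z_{s,t}|$ into the exponent'' does not literally align the general two-step prior with (\ref{eq:2l}), since $\log|\Z_{s,t}|$ is not a constant multiple of the penalty in (\ref{eq:2l}). These are fixable, but they reinforce that the cleanest path is the paper's direct argument rather than invoking Theorem~\ref{thm:main}.
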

To the best our knowledge, this is the first procedure that achieves the optimal rates simultaneously for both $\ell_2$ and $\ell_{\infty}$ losses in a group sparse signal recovery problem. The $e^{-D|S|\left(m+\log\frac{ep}{|S|}\right)}$ part in (\ref{eq:2l}) preserves the group sparse structure and results in the optimal $\ell_2$ result (\ref{eq:sup-g-2}). The $e^{-D|T|\log\frac{em|S|}{|T|}}$ part in (\ref{eq:2l}) does a further model selection in a finer resolution, thus giving optimal rate for each coordinate in (\ref{eq:sup-g-inf}). The subtlety of the simultaneous adaptation under both global and local loss functions is not reflected in an ordinary sparsity setting. When $m=1$, group sparsity reduces to ordinary sparsity and the two-level model selection prior $\Pi$ is equivalent to the prior in Section \ref{sec:slr}, so that a one-level model selection would be sufficient for the task.

\section{Proof of Theorem \ref{thm:main}}\label{sec:proof}

Let us first introduce some notation and give the outline of the proof. Define the following two sets,
$$\mathcal{A}(t)=\left\{\epsilon(\Z_{\tau})>(1+\delta_1)\epsilon(\Z_{\tau^*})+\delta_1\norm{\X_{\Z^*}(Q^*)-\theta^*}^2+ct\right\},$$
$$U(t)=\left\{\norm{\X_Z(Q)-\theta^*}^2>(1+\delta_2)\norm{\X_{Z^*}(Q^*)-\theta^*}^2+M(\epsilon(\Z_{\tau^*})+t)\right\}.$$
We will specify the numbers $\delta_1,\delta_2,c$ later. The goal of the proof is to derive bounds for both $\mathbb{E}\Pi(\tau\in\mathcal{A}(t)|Y)$ and $\mathbb{E}\Pi(\X_Z(Q)\in U(t)|Y)$ with any $t\geq 0$. Then, the conclusions (\ref{eq:main1}) and (\ref{eq:main2}) are deduced by setting $t=0$. The conclusion (\ref{eq:main33}) is then obtained by integrating out the tail bound of $\mathbb{E}\Pi(\X_Z(Q)\in U(t)|Y)$ over $t\geq 0$.

Using the fact that
$$\frac{e^{-\frac{1}{2}\norm{Y-\X_Z(Q)}^2}}{e^{-\frac{1}{2}\norm{Y-\X_{Z^*}(Q^*)}^2}}=e^{-\frac{1}{2}\norm{\X_Z(Q)-\X_{Z^*}(Q^*)}^2+\iprod{Y-\X_{Z^*}(Q^*)}{\X_Z(Q)-\X_{Z^*}(Q^*)}},$$
we can rewrite the posterior distribution as
\begin{equation}
\Pi\left(\X_Z(Q)\in U(t)|Y\right) = \frac{\sum_{\tau\in\T}\exp(-D\epsilon(\Z_{\tau}))\frac{1}{|\bar{\Z}_{\tau}|}\sum_{Z\in\bar{\Z}_{\tau}}R(Z,U(t))}{\sum_{\tau\in\T}\exp(-D\epsilon(\Z_{\tau}))\frac{1}{|\bar{\Z}_{\tau}|}\sum_{Z\in\bar{\Z}_{\tau}}R(Z)}, \label{eq:form-pred}
\end{equation}
where $R(Z,U(t))$ is defined by
\begin{eqnarray*}
&& \sqrt{\det(\X_Z^T\X_Z)}\left(\frac{\lambda}{\sqrt{\pi}}\right)^{\ell(\Z_{\tau})} \\
&& \times \int_{\X_Z(Q)\in U(t)} e^{-\frac{1}{2}\norm{\X_Z(Q)-\X_{Z^*}(Q^*)}^2+\iprod{Y-\X_{Z^*}(Q^*)}{\X_Z(Q)-\X_{Z^*}(Q^*)}-\lambda\norm{\X_Z(Q)}} dQ,
\end{eqnarray*}
and $R(Z)=R(Z,\mathbb{R}^N)$. Moreover, for a class of structure indexes $\mathcal{A}(t)\subset\T$, its posterior distribution can be written as
\begin{equation}
\Pi\left(\tau\in\mathcal{A}(t)|Y\right) = \frac{\sum_{\tau\in\mathcal{A}(t)}\exp(-D\epsilon(\Z_{\tau}))\frac{1}{|\bar{\Z}_{\tau}|}\sum_{Z\in\bar{\Z}_{\tau}}R(Z)}{\sum_{\tau\in\T}\exp(-D\epsilon(\Z_{\tau}))\frac{1}{|\bar{\Z}_{\tau}|}\sum_{Z\in\bar{\Z}_{\tau}}R(Z)}. \label{eq:form-dim}
\end{equation}
We are going to work with the formulas (\ref{eq:form-dim}) and (\ref{eq:form-pred}) to prove (\ref{eq:main1}) and (\ref{eq:main2}), respectively. The main strategy is to lower bound $R(Z^*)$ in the denominator and upper bound $R(Z)$ or $R(Z,U(t))$ in the numerator given some events holding with high probability. For each $Z\in\bar{\Z}_{\tau}$ and $t\geq 0$, consider the following events
\begin{eqnarray*}
E_Z(t) &=& \left\{\left|\iprod{W}{\X_Z(Q)-\X_{Z^*}(Q^*)}\right|\leq \sqrt{\epsilon^*(\Z_{\tau})+t}\norm{\X_Z(Q)-\X_{Z^*}(Q^*)}\text{ for all }Q\in\mathbb{R}^{\ell(\Z_{\tau})}\right\}, \\
F_Z(t) &=& \left\{\left|\iprod{W}{\X_Z(Q)-\X_{Z^*}(Q^*)}\right|\leq \sqrt{\epsilon^*(\Z_{\tau^*})+t}\norm{\X_Z(Q)-\X_{Z^*}(Q^*)}\text{ for all }Q\in\mathbb{R}^{\ell(\Z_{\tau})}\right\},
\end{eqnarray*}
where $\epsilon^*(\Z_{\tau})=C_1\epsilon(\Z_{\tau})+C_2\norm{\X_{Z^*}(Q^*)-\theta^*}^2$ and $\epsilon^*(\Z_{\tau^*})=C_1\epsilon(\Z_{\tau^*})+C_2\norm{\X_{Z^*}(Q^*)-\theta^*}^2$ for some constants $C_1,C_2$ to be specified later. The next lemma shows that both events hold with high probability.
\begin{lemma}\label{lem:noise}
For any constants $C_1>1$, $C_2>0$ and $t\geq 0$, the conditions (\ref{eq:larger}) and (\ref{eq:subG}) imply
\begin{eqnarray*}
\mathbb{P}(E_Z(t)^c) &\leq& 2\exp\left(-(\rho C_1/16-5)\epsilon(\Z_{\tau})-\rho C_2\norm{\X_{Z^*}(Q^*)-\theta^*}^2/16-\rho t/16\right), \\
\mathbb{P}(F_Z(t)^c) &\leq& 2\exp\left(5\ell(\Z_{\tau})-\rho C_1\epsilon(\Z_{\tau^*})/16-\rho C_2\norm{\X_{Z^*}(Q^*)-\theta^*}^2/16-\rho t/16\right).
\end{eqnarray*}
\end{lemma}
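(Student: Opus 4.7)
The plan is to recognize that both $E_Z$ and $F_Z$ concern the same supremum
\[S := \sup_{Q} \frac{|\iprod{W}{\X_Z(Q) - \X_{Z^*}(Q^*)}|}{\|\X_Z(Q) - \X_{Z^*}(Q^*)\|},\]
just with different thresholds, so both bounds follow from a single tail estimate. As $Q$ ranges over $\mathbb{R}^{\ell(\Z_\tau)}$, the vector $\X_Z(Q) - \X_{Z^*}(Q^*)$ lies in the affine set $A := -\X_{Z^*}(Q^*) + \text{col}(\X_Z)$, which is contained in the linear subspace $V := \text{col}(\X_Z) + \mathbb{R}\,\X_{Z^*}(Q^*)$ of dimension at most $\ell(\Z_\tau) + 1$. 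Since $\iprod{W}{v} = \iprod{P_V W}{v}$ for every $v \in V$, Cauchy--Schwarz gives $S \leq \|P_V W\|$, where $P_V$ denotes the orthogonal projection onto $V$.

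Next I would bound $\|P_V W\|$ by the standard $\delta$-net tail estimate. For a $\delta \in (0,1)$ to be chosen, fix a $\delta$-net $\mathcal{N}$ of the unit sphere of $V$ with $|\mathcal{N}| \leq (1 + 2/\delta)^{\dim V}$; a one-line discretization yields $\|P_V W\| \leq (1-\delta)^{-1} \max_{v \in \mathcal{N}} |\iprod{W}{v}|$. A union bound together with (\ref{eq:subG}) then gives, for every $s > 0$,
\[\mathbb{P}\!\left(\|P_V W\| > s\right) \leq 2 (1 + 2/\delta)^{\dim V} \exp\!\left(-\rho (1-\delta)^2 s^2 / 2\right).\]

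It remains to specialize the threshold. For $E_Z^c$ take $s^2 = \epsilon^*(\Z_\tau) = C_1 \epsilon(\Z_\tau) + C_2 \|\X_{Z^*}(Q^*) - \theta^*\|^2$; using $\dim V \leq \ell(\Z_\tau) + 1 \leq \epsilon(\Z_\tau) + 1$ by (\ref{eq:larger}) and fixing a single $\delta \in (0,1)$ that enforces both $\rho(1-\delta)^2/2 \geq \rho/16$ and $\log(1 + 2/\delta) \leq 5$, the entropy factor is absorbed as $5\epsilon(\Z_\tau)$ and combines with $-\rho C_1 \epsilon(\Z_\tau)/16 - \rho C_2 \|\X_{Z^*}(Q^*) - \theta^*\|^2/16$ to yield the first claim. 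For $F_Z^c$ take instead $s^2 = \epsilon^*(\Z_{\tau^*})$; since the threshold no longer scales with $\ell(\Z_\tau)$, the entropy cannot be absorbed and appears as the additive $5\ell(\Z_\tau)$ penalty in the second bound. The only real obstacle is bookkeeping of constants: one must pick a single $\delta$ that simultaneously delivers the target variance constant $\rho/16$ and the target entropy constant $5$, and absorb the benign $+1$ in $\dim V$ arising from the auxiliary direction $\X_{Z^*}(Q^*)$. Everything else is a routine discretization-plus-union-bound argument.
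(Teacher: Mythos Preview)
Your proposal is correct and follows essentially the same approach as the paper's proof: control the supremum by discretizing a low-dimensional subspace and applying a union bound with (\ref{eq:subG}). The only packaging difference is that the paper separates the fixed direction $\X_Z(\bar Q_Z)-\X_{Z^*}(Q^*)$ from the column space of $\X_Z$ via the Pythagorean identity (\ref{eq:pyth}) and handles them as two events, whereas you embed both into a single subspace $V$ of dimension $\ell(\Z_\tau)+1$; this is why you pick up the extra ``$+1$'' in the entropy, which (as you note) is harmless once $\ell(\Z_\tau)\geq 1$.
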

We need a lemma to characterize the growing rate of $\epsilon(\Z_{\tau})$.
\begin{lemma}\label{lem:growth}
For any $\beta\geq 2$ and $\alpha\geq 1$, the condition (\ref{eq:capacity}) implies
\begin{eqnarray*}
\sum_{\{\tau\in\T:\epsilon(\Z_{\tau})\leq\alpha\}}\exp\left(\beta\epsilon(\Z_{\tau})\right) &\leq& 4\ceil{\alpha}\exp(\beta\ceil{\alpha}); \\
\sum_{\{\tau\in\T:\epsilon(\Z_{\tau})>\alpha\}}\exp\left(-\beta\epsilon(\Z_{\tau})\right) &\leq& 4\alpha\exp\left(-\beta\floor{\alpha}\right); \\
\sum_{\{\tau\in\T:\epsilon(\Z_{\tau})\leq\alpha\}} \exp\left(-\beta\epsilon(\Z_{\tau})\right) &\leq& 6.
\end{eqnarray*}
\end{lemma}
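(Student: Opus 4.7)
The plan is to handle all three bounds uniformly by slicing the range of $\epsilon(\Z_\tau)$ along the dyadic layers $(t-1,t]$ for $t\in\mathbb{N}$ and invoking the capacity bound (\ref{eq:capacity}) inside each layer. Since $\epsilon(\Z_{\tau})\geq\ell(\Z_\tau)+\log|\Z_\tau|>0$ in every setting of interest, every relevant $\tau$ lies in some layer $(t-1,t]$ with $t\geq 1$, and (\ref{eq:capacity}) contributes at most $t$ indices per layer.

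For the first bound, I would write, with $T=\lceil\alpha\rceil$,
\[
\sum_{\{\tau:\epsilon(\Z_\tau)\leq\alpha\}}e^{\beta\epsilon(\Z_\tau)}\;\leq\;\sum_{t=1}^{T}t\,e^{\beta t}\;=\;T e^{\beta T}\sum_{k=0}^{T-1}\frac{T-k}{T}e^{-\beta k}\;\leq\;\frac{T e^{\beta T}}{1-e^{-\beta}},
\]
and then use $\beta\geq 2$ to get $1/(1-e^{-\beta})\leq 1/(1-e^{-2})<4$, which yields the desired $4\lceil\alpha\rceil e^{\beta\lceil\alpha\rceil}$.

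For the second bound, let $T_0=\lfloor\alpha\rfloor$; every $\tau$ with $\epsilon(\Z_\tau)>\alpha$ lies in some layer $(t-1,t]$ with $t\geq T_0+1$, and in that layer $e^{-\beta\epsilon(\Z_\tau)}<e^{-\beta(t-1)}$. Thus
\[
\sum_{\{\tau:\epsilon(\Z_\tau)>\alpha\}}e^{-\beta\epsilon(\Z_\tau)}\;\leq\;\sum_{t=T_0+1}^{\infty}t\,e^{-\beta(t-1)}\;=\;e^{-\beta T_0}\sum_{k=0}^{\infty}(T_0+1+k)e^{-\beta k}.
\]
Splitting the last sum into a geometric part and an arithmetic-geometric part gives
$e^{-\beta T_0}\bigl[(T_0+1)/(1-e^{-\beta})+e^{-\beta}/(1-e^{-\beta})^2\bigr]$, and plugging in $\beta\geq 2$ and $T_0\leq\alpha$ shows the bracket is at most $4\alpha$, giving the desired $4\alpha e^{-\beta\lfloor\alpha\rfloor}$. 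The third bound is essentially the easiest: the same slicing and the strict inequality $\epsilon(\Z_\tau)>t-1$ yield
\[
\sum_{\{\tau:\epsilon(\Z_\tau)\leq\alpha\}}e^{-\beta\epsilon(\Z_\tau)}\;\leq\;\sum_{t=1}^{\lceil\alpha\rceil}t\,e^{-\beta(t-1)}\;\leq\;\sum_{s=0}^{\infty}(s+1)e^{-\beta s}\;=\;\frac{1}{(1-e^{-\beta})^2},
\]
and the rightmost expression is at most $1/(1-e^{-2})^2<2<6$ under the assumption $\beta\geq 2$.

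There is no real obstacle; the only delicate point is carrying around ceilings versus floors at the endpoints (to match the claimed $\lceil\alpha\rceil$ and $\lfloor\alpha\rfloor$ in the right-hand sides) and verifying that the numerical constants $1/(1-e^{-2})$ and $1/(1-e^{-2})^2$ are small enough to absorb into the advertised constants $4$ and $6$. Both checks are immediate since $e^{-2}<0.14$, so the hypothesis $\beta\geq 2$ is used precisely to keep the geometric tails summable with room to spare.
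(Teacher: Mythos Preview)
Your proposal is correct and follows essentially the same approach as the paper: slice $\epsilon(\Z_\tau)$ into integer layers $(t-1,t]$, apply the capacity bound (\ref{eq:capacity}) to count at most $t$ indices per layer, and then sum the resulting (arithmetic--)geometric series. Your bookkeeping is in fact a bit tidier than the paper's---the paper carries around extra additive terms ($+e^{\beta}$ in the first bound, $+1$ in the third) and, for the second bound, uses the inequality $\log t \le \tfrac{\log\lfloor\alpha\rfloor}{\lfloor\alpha\rfloor}\,t$ rather than your direct geometric-series evaluation---but the idea and the final constants coincide.
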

The proofs of Lemma \ref{lem:noise} and Lemma \ref{lem:growth} are given in Section \ref{sec:pftech} of the supplement \citep{gao16}.

\paragraph{Lower bounding $R(Z^*)$.}
We first introduce some extra notation. For the matrix $\X_{Z^*}\in\mathbb{R}^{N\times\ell(\Z_{\tau^*})}$, its singular value decomposition is $\X_{Z^*}=\mathcal{U}\Lambda\mathcal{V}^T$, with $\mathcal{U}\in\mathbb{R}^{N\times \ell(\Z_{\tau^*})}$ and $\mathcal{V}\in\mathbb{R}^{\ell(\Z_{\tau^*})\times \ell(\Z_{\tau^*})}$ being orthonormal matrices, and $\Lambda$ is an $\ell(\Z_{\tau^*})\times \ell(\Z_{\tau^*})$ diagonal matrix with positive entries on the diagonal.

For $Z^*\in\bar{\Z}_{\tau^*}$ with any $\tau^*\in\T$, we lower bound $R(Z^*)$ by
\begin{eqnarray}
\nonumber && \left(\frac{\sqrt{\pi}}{\lambda}\right)^{\ell(\Z_{\tau^*})}R(Z^*) \\
\nonumber  &=& \sqrt{\det(\X_{Z^*}^T\X_{Z^*})}\int e^{-\frac{1}{2}\norm{\X_{Z^*}(Q)-\X_{Z^*}(Q^*)}^2+\iprod{Y-\X_{Z^*}(Q^*)}{\X_{Z^*}(Q)-\X_{Z^*}(Q^*)}-\lambda\norm{\X_{Z^*}(Q)}} dQ \\
\label{eq:denom1} &=& \sqrt{\det(\X_{Z^*}^T\X_{Z^*})}\int e^{-\frac{1}{2}\norm{\X_{Z^*}(Q)}^2+\iprod{Y-\X_{Z^*}(Q^*)}{\X_{Z^*}(Q)}-\lambda\norm{\X_{Z^*}(Q)+\X_{Z^*}(Q^*)}} dQ \\
\label{eq:denom2} &\geq& e^{-\lambda\norm{\X_{Z^*}(Q^*)}}\sqrt{\det(\X_{Z^*}^T\X_{Z^*})}\int e^{-\frac{1}{2}\norm{\X_{Z^*}(Q)}^2+\iprod{Y-\X_{Z^*}(Q^*)}{\X_{Z^*}(Q)}-\lambda\norm{\X_{Z^*}(Q)}} dQ \\
\label{eq:denom2.5} &=& e^{-\lambda\norm{\X_{Z^*}(Q^*)}}\sqrt{\det(\mathcal{V}\Lambda^2\mathcal{V}^T)}\int e^{-\frac{1}{2}\norm{\Lambda \mathcal{V}^TQ}^2+\iprod{\mathcal{U}^T(Y-\X_{Z^*}(Q^*))}{\Lambda\mathcal{V}^TQ}-\lambda\norm{\Lambda \mathcal{V}^TQ}}dQ  \\
\label{eq:denom3} &=& e^{-\lambda\norm{\X_{Z^*}(Q^*)}}\int e^{-\frac{1}{2}\norm{b}^2+\iprod{\mathcal{U}^T(Y-\X_{Z^*}(Q^*))}{b}-\lambda\norm{b}} db \\
\label{eq:denom4} &\geq& e^{-\lambda\norm{\X_{Z^*}(Q^*)}}\int e^{-\frac{1}{2}\norm{b}^2-\lambda\norm{b}} db \\
\nonumber && \times \exp\left(\int\iprod{\mathcal{U}^T(Y-\X_{Z^*}(Q^*))}{b}\frac{e^{-\frac{1}{2}\norm{b}^2-\lambda\norm{b}}}{\int e^{-\frac{1}{2}\norm{b}^2-\lambda\norm{b}} db}db\right) \\
\label{eq:denom5} &=& e^{-\lambda\norm{\X_{Z^*}(Q^*)}}\int e^{-\frac{1}{2}\norm{b}^2-\lambda\norm{b}} db.
\end{eqnarray}
The equalities (\ref{eq:denom1}) and (\ref{eq:denom3}) are due to changes of variables and the linearity (\ref{eq:linearity}), and we use the orthonormal property of $\mathcal{U}$ to get $\det(\X_{Z^*}^T\X_{Z^*})=\det(\mathcal{V}\Lambda^2\mathcal{V}^T)$ and $\norm{\X_{Z^*}(Q)}=\norm{\Lambda \mathcal{V}^TQ}$. We use triangle inequality and Jensen's inequality to derive (\ref{eq:denom2}) and (\ref{eq:denom4}), respectively. The last equality (\ref{eq:denom5}) uses the fact that the distribution $\frac{e^{-\frac{1}{2}\norm{b}^2-\lambda\norm{b}}}{\int e^{-\frac{1}{2}\norm{b}^2-\lambda\norm{b}} db}$ is spherically symmetric so that its mean is zero. Let us continue to lower bound the integral $\int e^{-\frac{1}{2}\norm{b}^2-\lambda\norm{b}} db$ by
\begin{eqnarray*}
\int e^{-\frac{1}{2}\norm{b}^2-\lambda\norm{b}} db &=& \frac{2\pi^{\ell(\Z_{\tau^*})/2}}{\Gamma(\ell(\Z_{\tau^*})/2)}\int_0^{\infty}r^{\ell(\Z_{\tau^*})-1}e^{-\frac{1}{2}r^2-\lambda r}dr \\
&\geq& \frac{2\pi^{\ell(\Z_{\tau^*})/2}}{\Gamma(\ell(\Z_{\tau^*})/2)}e^{-\frac{1}{2}\ell(\Z_{\tau^*})-\lambda\sqrt{\ell(\Z_{\tau^*})}}\int_0^{\sqrt{\ell(\Z_{\tau^*})}}r^{\ell(\Z_{\tau^*})-1}dr \\
&=& \frac{2\pi^{\ell(\Z_{\tau^*})/2}}{\ell(\Z_{\tau^*})}\frac{[\ell(\Z_{\tau^*})]^{\ell(\Z_{\tau^*})/2}}{\Gamma(\ell(\Z_{\tau^*})/2)}e^{-\frac{1}{2}\ell(\Z_{\tau^*})-\lambda\sqrt{\ell(\Z_{\tau^*})}} \\
&\geq& \frac{2(2\pi)^{\ell(\Z_{\tau^*})/2}}{\ell(\Z_{\tau^*})}e^{-\frac{1}{2}\ell(\Z_{\tau^*})-\lambda\sqrt{\ell(\Z_{\tau^*})}}.
\end{eqnarray*}
Combining the above lower bound with (\ref{eq:denom5}), we reach the conclusion
\begin{eqnarray}
\nonumber R(Z^*) &\geq& e^{-\lambda\norm{\X_{Z^*}(Q^*)}}\exp\left(-\frac{1}{2}\ell(\Z_{\tau^*})-\lambda\sqrt{\ell(\Z_{\tau^*})}+\ell(\Z_{\tau^*})\log\lambda-\log \ell(\Z_{\tau^*})\right)  \\
 \label{eq:ref3-confuse}&\geq& e^{-\lambda\norm{\X_{Z^*}(Q^*)}}\exp\left(-\ell(\Z_{\tau^*})-\lambda\sqrt{\ell(\Z_{\tau^*})}+\ell(\Z_{\tau^*})\log\lambda\right) \\
 \label{eq:denom-lower}&\geq& e^{-\lambda\norm{\X_{Z^*}(Q^*)}-(1+\lambda+\lambda^{-1})\ell(\Z_{\tau^*})}.
\end{eqnarray}
The inequality (\ref{eq:ref3-confuse}) is by $-\log \ell(\Z_{\tau^*})\geq -\frac{1}{2}\ell(\Z_{\tau^*})$ given the fact that $\ell(\Z_{\tau^*})$ is an integer. To obtain (\ref{eq:denom-lower}), we discuss two cases. When $\lambda \geq 1$,
$$-\lambda\sqrt{\ell(\Z_{\tau^*})}+\ell(\Z_{\tau^*})\log\lambda\geq -\lambda\sqrt{\ell(\Z_{\tau^*})}\geq -\lambda\ell(\Z_{\tau^*})\geq -(\lambda+\lambda^{-1})\ell(\Z_{\tau^*}).$$
When $\lambda <1$,
$$-\lambda\sqrt{\ell(\Z_{\tau^*})}+\ell(\Z_{\tau^*})\log\lambda\geq -\ell(\Z_{\tau^*})+\ell(\Z_{\tau^*})\log\lambda\geq -\lambda^{-1}\ell(\Z_{\tau^*})\geq -(\lambda+\lambda^{-1})\ell(\Z_{\tau^*}).$$
Note that (\ref{eq:denom-lower}) is a deterministic lower bound for the denominator $R(Z^*)$. The arguments we have used to derive (\ref{eq:denom-lower}) are greatly inspired by the corresponding ones in \cite{castillo2012needles,castillo2014}.

\paragraph{Upper bounding $R(Z)\mathbb{I}_{E_Z(t)}$.}
To facilitate the analysis, we introduce the object
\begin{equation}
\bar{Q}_Z=\argmin_{Q\in\mathbb{R}^{\ell(\Z_{\tau})}}\norm{\X_Z(Q)-\X_{Z^*}(Q^*)}^2.\label{eq:defQZ}
\end{equation}
The property of least squares implies the following Pythagorean identity,
\begin{equation}
\norm{\X_Z(Q)-\X_{Z^*}(Q^*)}^2 = \norm{\X_Z(Q)-\X_{Z}(\bar{Q}_Z)}^2+\norm{\X_Z(\bar{Q}_Z)-\X_{Z^*}(Q^*)}^2.\label{eq:pyth}
\end{equation}
We first analyze the exponent in the definition of $R(Z)$ on the event $E_Z(t)$ by
\begin{eqnarray}
\nonumber && -\frac{1}{2}\norm{\X_Z(Q)-\X_{Z^*}(Q^*)}^2+\iprod{Y-\X_{Z^*}(Q^*)}{\X_Z(Q)-\X_{Z^*}(Q^*)}-\lambda\norm{\X_Z(Q)} \\
\nonumber &=& -\frac{1}{2}\norm{\X_Z(Q)-\X_{Z^*}(Q^*)}^2+\iprod{W}{\X_Z(Q)-\X_{Z^*}(Q^*)} \\
\nonumber && +\iprod{\theta^*-\X_{Z^*}(Q^*)}{\X_Z(Q)-\X_{Z^*}(Q^*)}-\lambda\norm{\X_Z(Q)} \\
\label{eq:num1} &\leq& -\frac{1}{2}\norm{\X_Z(Q)-\X_{Z^*}(Q^*)}^2+(\sqrt{\epsilon^*(\Z_{\tau})+t}+\lambda)\norm{\X_Z(Q)-\X_{Z^*}(Q^*)} \\
\nonumber && +\norm{\theta^*-\X_{Z^*}(Q^*)}\norm{\X_Z(Q)-\X_{Z^*}(Q^*)}\\
\nonumber && -\lambda\norm{\X_Z(Q)}-\lambda\norm{\X_Z(Q)-\X_{Z^*}(Q^*)} \\
\label{eq:num2} &\leq& 2\left(\sqrt{\epsilon^*(\Z_{\tau})+t}+\lambda\right)^2-\left(\frac{1}{2}-\frac{1}{8}\right)\norm{\X_Z(Q)-\X_{Z^*}(Q^*)}^2 \\
\nonumber && +2\norm{\theta^*-\X_{Z^*}(Q^*)}^2+\frac{1}{8}\norm{\X_Z(Q)-\X_{Z^*}(Q^*)}^2 -\lambda\norm{\X_{Z^*}(Q^*)} \\
\label{eq:num3} &\leq& (4+2/C_2)\epsilon^*(\Z_{\tau})+4t+4\lambda^2-\frac{1}{4}\norm{\X_Z(Q)-\X_{Z^*}(Q^*)}^2 -\lambda\norm{\X_{Z^*}(Q^*)} \\
\label{eq:num4} &\leq& (4+2/C_2)\epsilon^*(\Z_{\tau})+4t+4\lambda^2-\frac{1}{4}\norm{\X_Z(Q)-\X_Z(\bar{Q}_Z)}^2-\lambda\norm{\X_{Z^*}(Q^*)}.
\end{eqnarray}
We have used Cauchy-Schwarz inequality and the event $E_Z(t)$ to get (\ref{eq:num1}). The inequality (\ref{eq:num2}) is due to the fact $ab\leq 2a^2+b^2/8$ for all $a,b\geq 0$ and triangle inequality. By rearrangement and the fact $C_2\norm{\theta^*-\X_{Z^*}(Q^*)}^2\leq \epsilon^*(\Z_{\tau})$, we obtain (\ref{eq:num3}). Finally, the inequality (\ref{eq:num4}) is due to the identity (\ref{eq:pyth}). The above upper bound implies
\begin{eqnarray}
\nonumber R(Z)\mathbb{I}_{E_Z(t)} &\leq& \left(\frac{\lambda}{\sqrt{\pi}}\right)^{\ell(\Z_{\tau})} e^{(4+2/C_2)\epsilon^*(\Z_{\tau})+4t+4\lambda^2-\lambda\norm{\X_{Z^*}(Q^*)}}\\
\nonumber &&\times \sqrt{\det(\X_Z^T\X_Z)}\int e^{-\frac{1}{4}\norm{\X_Z(Q)-\X_Z(\bar{Q}_Z)}^2} dQ \\
\label{eq:porsche} &=& \left(\frac{\lambda}{\sqrt{\pi}}\right)^{\ell(\Z_{\tau})} e^{(4+2/C_2)\epsilon^*(\Z_{\tau})+4t+4\lambda^2-\lambda\norm{\X_{Z^*}(Q^*)}}\int e^{-\frac{1}{4}\norm{b}^2}db \\
\nonumber &=& \left(2\lambda\right)^{\ell(\Z_{\tau})} e^{(4+2/C_2)\epsilon^*(\Z_{\tau})+4t+4\lambda^2-\lambda\norm{\X_{Z^*}(Q^*)}}.
\end{eqnarray}
The change of variable in (\ref{eq:porsche}) uses the same argument in (\ref{eq:denom1}) and (\ref{eq:denom3}).
Using the fact that $\ell(\Z_{\tau})\leq \epsilon^*(\Z_{\tau})$ by (\ref{eq:larger}), we reach the conclusion
\begin{equation}
R(Z)\mathbb{I}_{E_Z(t)}\leq e^{(4+2/C_2+|\log(2\lambda)|)\epsilon^*(\Z_{\tau})+4t+4\lambda^2-\lambda\norm{\X_{Z^*}(Q^*)}}.\label{eq:num-upper}
\end{equation}

\paragraph{Upper bounding $R(Z,U(t))\mathbb{I}_{F_Z(t)}$.}
We require $\delta_2\in(0,1/4)$ throughout the proof.
Let $\xi\in (0,1/4)$ be a constant to be specified later.
When both $F_Z(t)$ and $U(t)$ hold, the exponent in the definition of $R(Z,U(t))$ is bounded by
\begin{eqnarray}
\nonumber && -\frac{1}{2}\norm{\X_Z(Q)-\X_{Z^*}(Q^*)}^2+\iprod{Y-\X_{Z^*}(Q^*)}{\X_Z(Q)-\X_{Z^*}(Q^*)}-\lambda\norm{\X_Z(Q)} \\
\nonumber &=& -\frac{1}{2}\xi\norm{\X_Z(Q)-\X_{Z^*}(Q^*)}^2 + \iprod{W}{\X_Z(Q)-\X_{Z^*}(Q^*)} + \iprod{\theta^*-\X_{Z^*}(Q^*)}{\X_Z(Q)-\X_{Z^*}(Q^*)} \\
\nonumber && -\frac{1}{2}(1-\xi)\norm{\X_Z(Q)-\X_{Z^*}(Q^*)}^2 - \lambda\norm{\X_Z(Q)} \\
\label{eq:numU1} &\leq& -\frac{1}{2}\xi\norm{\X_Z(Q)-\X_{Z^*}(Q^*)}^2 + (\sqrt{\epsilon^*(\Z_{\tau^*})+t}+\lambda)\norm{\X_Z(Q)-\X_{Z^*}(Q^*)} \\
\nonumber && +\iprod{\theta^*-\X_{Z^*}(Q^*)}{\X_Z(Q)-\X_{Z^*}(Q^*)}-\frac{1}{2}(1-\xi)\norm{\X_Z(Q)-\X_{Z^*}(Q^*)}^2 \\
\nonumber && -\lambda\norm{\X_Z(Q)-\X_{Z^*}(Q^*)} - \lambda\norm{\X_Z(Q)} \\
\label{eq:numU2} &\leq& \xi^{-1}\left(\sqrt{\epsilon^*(\Z_{\tau^*})+t}+\lambda\right)^2-\frac{1}{4}\xi\norm{\X_Z(Q)-\X_{Z^*}(Q^*)}^2 \\
\nonumber && -\frac{1}{2}(1-\xi)\norm{\X_Z(Q)-\theta^*}^2+\frac{1}{2}(1+\xi)\norm{\X_{Z^*}(Q^*)-\theta^*}^2 + \xi\iprod{\X_Z(Q)-\theta^*}{\theta^*-\X_{Z^*}(Q^*)} \\
\nonumber && -\lambda\norm{\X_{Z^*}(Q^*)} \\
\label{eq:numU3} &\leq& \xi^{-1}\left(\sqrt{\epsilon^*(\Z_{\tau^*})+t}+\lambda\right)^2-\frac{1}{4}\xi\norm{\X_Z(Q)-\X_{Z^*}(Q^*)}^2-\lambda\norm{\X_{Z^*}(Q^*)} \\
\nonumber && -\frac{1}{2}(1-2\xi)\norm{\X_Z(Q)-\theta^*}^2+\frac{1}{2}(1+2\xi)\norm{\X_{Z^*}(Q^*)-\theta^*}^2 \\
\label{eq:numU4} &\leq& 16\delta_2^{-1}\lambda^2 - \frac{1}{8}M\epsilon(\Z_{\tau^*}) -\frac{1}{8}Mt -\frac{1}{16}\delta_2\norm{\X_{Z^*}(Q^*)-\theta^*}^2 \\
\nonumber && -\frac{1}{32}\delta_2\norm{\X_Z(Q)-\X_{Z}(\bar{Q}_Z)}^2-\lambda\norm{\X_{Z^*}(Q^*)}.
\end{eqnarray}
We have used the event $F_Z$ to get (\ref{eq:numU1}). Now we explain the inequality (\ref{eq:numU2}). Due to the fact that $ab\leq \xi^{-1}a^2+\xi b^2/4$, we have
\begin{eqnarray*}
&& -\frac{1}{2}\xi\norm{\X_Z(Q)-\X_{Z^*}(Q^*)}^2 + (\sqrt{\epsilon^*(\Z_{\tau^*})+t}+\lambda)\norm{\X_Z(Q)-\X_{Z^*}(Q^*)}\\
&\leq& \xi^{-1}\left(\sqrt{\epsilon^*(\Z_{\tau^*})+t}+\lambda\right)^2-\frac{1}{4}\xi\norm{\X_Z(Q)-\X_{Z^*}(Q^*)}^2.
\end{eqnarray*}
It is easy to check the following equality
\begin{eqnarray*}
&& \iprod{\theta^*-\X_{Z^*}(Q^*)}{\X_Z(Q)-\X_{Z^*}(Q^*)}-\frac{1}{2}(1-\xi)\norm{\X_Z(Q)-\X_{Z^*}(Q^*)}^2\\
&=&-\frac{1}{2}(1-\xi)\norm{\X_Z(Q)-\theta^*}^2+\frac{1}{2}(1+\xi)\norm{\X_{Z^*}(Q^*)-\theta^*}^2 + \xi\iprod{\X_Z(Q)-\theta^*}{\theta^*-\X_{Z^*}(Q^*)}.
\end{eqnarray*}
Finally, by triangle inequality, we get
$$-\lambda\norm{\X_Z(Q)-\X_{Z^*}(Q^*)} - \lambda\norm{\X_Z(Q)}\leq -\lambda\norm{\X_{Z^*}(Q^*)}.$$
Then, (\ref{eq:numU3}) is by rearranging (\ref{eq:numU2}) together with the inequality
$$\iprod{\X_Z(Q)-\theta^*}{\theta^*-\X_{Z^*}(Q^*)}\leq \frac{1}{2}\norm{\X_Z(Q)-\theta^*}^2+\frac{1}{2}\norm{\theta^*-\X_{Z^*}(Q^*)}^2.$$
Finally, we have set
\begin{equation}
\xi=\frac{1}{8}\delta_2\quad\text{and}\quad C_2=\frac{1}{128}\delta_2^2\label{eq:C2}
\end{equation}
and used (\ref{eq:pyth})
to obtain (\ref{eq:numU4}) on the event $U(t)$ for all $M>\max\left\{128\delta_2^{-1}C_1,128\delta_2^{-1}\right\}$. Note that we require $\delta_2\in (0,1/4)$ for the inequality (\ref{eq:numU4}). Using the above bound, we have
\begin{eqnarray*}
R(Z,U(t))\mathbb{I}_{F_Z(t)} &\leq& \left(\frac{\lambda}{\sqrt{\pi}}\right)^{\ell(\Z_{\tau})}e^{-\lambda\norm{\X_{Z^*}(Q^*)}+16\delta_2^{-1}\lambda^2 - \frac{1}{8}M\epsilon(\Z_{\tau^*})-\frac{1}{8}Mt-\frac{1}{16}\delta_2\norm{\X_{Z^*}(Q^*)-\theta^*}^2} \\
&& \times \sqrt{\det(\X_Z^T\X_Z)}\int e^{-\frac{1}{32}\delta_2\norm{\X_Z(Q)-\X_Z(\bar{Q}_Z)}^2} dQ \\
&=& \left(\frac{4\lambda}{\sqrt{\delta_2/2}}\right)^{\ell(\Z_{\tau})}e^{-\lambda\norm{\X_{Z^*}(Q^*)}+16\delta_2^{-1}\lambda^2 - \frac{1}{8}M\epsilon(\Z_{\tau^*})-\frac{1}{8}Mt-\frac{1}{16}\delta_2\norm{\X_{Z^*}(Q^*)-\theta^*}^2}.
\end{eqnarray*}
by the same argument in deriving (\ref{eq:num-upper}). By $\ell(\Z_{\tau})\leq \epsilon^*(\Z_{\tau})$ from (\ref{eq:larger}), we reach the conclusion
\begin{equation}
R(Z,U(t))\mathbb{I}_{F_Z(t)} \leq e^{-\lambda\norm{\X_{Z^*}(Q^*)} - \frac{1}{16}M\epsilon(\Z_{\tau^*})-\frac{1}{8}Mt-\frac{1}{16}\delta_2\norm{\X_{Z^*}(Q^*)-\theta^*}^2}, \label{eq:numU-upper}
\end{equation}
for all $M>\max\left\{128\delta_2^{-1}(C_1+1), 16\log(4\lambda/\sqrt{\delta_2/2})+256\delta_2^{-1}\lambda^2\right\}$.

After obtaining the bounds (\ref{eq:denom-lower}), (\ref{eq:num-upper}) and (\ref{eq:numU-upper}), we are ready to prove the main results.
\begin{proof}[Proof of (\ref{eq:main1})]
First, we use (\ref{eq:denom-lower}) and (\ref{eq:num-upper}) to bound the ratio $R(Z)\mathbb{I}_{E_Z(t)}/R(Z^*)$,
\begin{eqnarray*}
|\bar{\Z}_{\tau^*}|\frac{R(Z)\mathbb{I}_{E_Z(t)}}{R(Z^*)} &\leq& e^{4\lambda^2}|\Z_{\tau^*}|\frac{e^{\left[4C_1+2C_1/C_2+C_1|\log(2\lambda)|\right](\epsilon(\Z_{\tau}))+\left[4C_2+2+C_2|\log(2\lambda)|\right]\norm{\X_{Z^*}(Q^*)-\theta^*}^2+4t}}{e^{-(1+\lambda+\lambda^{-1})\ell(\Z_{\tau^*})}} \\
&\leq& e^{4\lambda^2}\exp\left((1+\lambda+\lambda^{-1})\epsilon(\Z_{\tau^*})+C_1'\epsilon(\Z_{\tau})+C_2'\norm{\X_{Z^*}(Q^*)-\theta^*}^2+4t\right),
\end{eqnarray*}
where $C_1'=4C_1+2C_1/C_2+C_1|\log(2\lambda)|$ and $C_2'=4C_2+2+C_2|\log(2\lambda)|$. Consider (\ref{eq:form-dim}) with $\mathcal{A}(t)$. Here, we require that $\delta_1\in(0,1/3)$.
By $Z^*\in\bar{\Z}_{\tau^*}$, we have
\begin{eqnarray}
\label{eq:pf1.1}\mathbb{E}\Pi(\tau\in\mathcal{A}(t)|Y) &\leq& \sum_{\tau\in\mathcal{A}(t)}\frac{\exp\left(-D\epsilon(\Z_{\tau})\right)}{\exp\left(-D\epsilon(\Z_{\tau^*})\right)}\frac{|\bar{\Z}_{\tau^*}|}{|\bar{\Z}_{\tau}|}\sum_{Z\in\bar{\Z}_{\tau}}\mathbb{E}\frac{R(Z)\mathbb{I}_{E_Z(t)}}{R(Z^*)} \\
\label{eq:pf1.2}&& +\sum_{\tau\in\mathcal{A}(t)}\sum_{Z\in\bar{\Z}_{\tau}}\mathbb{P}(E_Z(t)^c).
\end{eqnarray}
According to previous calculations, (\ref{eq:pf1.1}) can be bounded by
\begin{eqnarray}
\label{eq:esfr} && \exp\left(4\lambda^2+(D+\lambda+\lambda^{-1}+1)\epsilon(\Z_{\tau^*})+C_2'\norm{\X_{Z^*}(Q^*)-\theta^*}^2+4t\right) \\
\nonumber && \times\sum_{\tau\in\mathcal{A}(t)}\exp\left(-(D-C_1')\epsilon(\Z_{\tau})\right).
\end{eqnarray}
Then, we can bound the sum in the above display by Lemma \ref{lem:growth}. We take $\alpha=(1+\delta_1)\epsilon(\Z_{\tau^*})+\delta_1\norm{\X_{\Z^*}(Q^*)-\theta^*}^2+ct$ and $\beta=D-C_1'$. Then, Lemma \ref{lem:growth} gives
\begin{eqnarray*}
&& \sum_{\tau\in\mathcal{A}(t)}\exp\left(-(D-C_1')\epsilon(\Z_{\tau})\right) \leq 4\alpha\exp\left(-\beta\floor{\alpha}\right) \leq 4e^{\beta}\exp(-(\beta-1)\alpha) \\
&\leq& 4e^D\exp\left(-(D-C_1'-1)(1+\delta_1)\epsilon(\Z_{\tau^*})-(D-C_1'-1)\delta_1\norm{\X_{\Z^*}(Q^*)-\theta^*}^2-(D-C_1'-1)ct\right).
\end{eqnarray*}
This leads to a bound for (\ref{eq:esfr}) as
\begin{eqnarray*}
&& 4e^{D+4\lambda^2}\exp\left(-\left((D-C_1'-1)\delta_1-C_2'\right)\norm{\X_{Z^*}(Q^*)-\theta^*}^2\right) \\
&& \times \exp\left(-\left((D-C_1'-1)(1+\delta_1)-(D+\lambda+\lambda^{-1}+1)\right)\epsilon(\Z_{\tau^*})-((D-C_1'-1)c-4)t\right) \\
&\leq& 4e^{D+4\lambda^2}\exp\left(-\frac{\delta_1 D}{2}\norm{\X_{Z^*}(Q^*)-\theta^*}^2-\frac{\delta_1D}{2}\epsilon(\Z_{\tau^*})-\frac{Dc}{2}t\right),
\end{eqnarray*}
for $D>\max\left\{\frac{\lambda+\lambda^{-1}+1+2(C_1'+1)}{\delta_1/2}, 2(C_1'+1)+\frac{2C_2'}{\delta_1},\frac{8}{c}+2(C_1'+1)\right\}$.
Using Lemma \ref{lem:noise}, Lemma \ref{lem:growth} and (\ref{eq:larger}), we bound the second term (\ref{eq:pf1.2})  by
\begin{equation}
2\exp\left(-\rho C_2\norm{\X_{Z^*}(Q^*)-\theta^*}^2/16-\rho t/16\right)\sum_{\tau\in\mathcal{A}(t)}\exp\left(-(\rho C_1/16-6)\epsilon(\Z_{\tau})\right).\label{eq:raag}
\end{equation}
Again, we will bound the sum in the above display by Lemma \ref{lem:growth} with $\alpha=(1+\delta_1)\epsilon(\Z_{\tau^*})+\delta_1\norm{\X_{\Z^*}(Q^*)-\theta^*}^2+ct$ and $\beta=\rho C_1/16-6$. That is,
\begin{eqnarray*}
&& \sum_{\tau\in\mathcal{A}(t)}\exp\left(-(\rho C_1/16-6)\epsilon(\Z_{\tau})\right)  \leq 4\alpha\exp\left(-\beta\floor{\alpha}\right) \leq 4e^{\beta}\exp(-(\beta-1)\alpha) \\
&\leq& 4e^{\rho C_1/16}\exp\left(-(\rho C_1/16-7)(1+\delta_1)\epsilon(\Z_{\tau^*})-(\rho C_1/16-7)\delta_1\norm{\X_{\Z^*}(Q^*)-\theta^*}^2-(\rho C_1/16-7)ct\right).
\end{eqnarray*}
Therefore, (\ref{eq:raag}) can be bounded by
\begin{eqnarray*}
&& 8e^{\rho C_1/16}\exp\left(-(\rho C_1/16-7)(1+\delta_1)\epsilon(\Z_{\tau^*})\right)\\
&& \times \exp\left(-(\rho C_1/16+\rho C_2/16-7)\delta_1\norm{\X_{\Z^*}(Q^*)-\theta^*}^2-(\rho (C_1+1)/16-7)ct\right) \\
&\leq & 8\exp\left(-(\rho C_1/16-8)(1+\delta_1)\epsilon(\Z_{\tau^*})\right) \\
&& \times\exp\left(-(\rho C_1/16+\rho C_2/16-7)\delta_1\norm{\X_{\Z^*}(Q^*)-\theta^*}^2-(\rho (C_1+1)/16-7)ct\right) \\
&\leq& 8\exp\left(-7\delta_1\norm{\X_{Z^*}(Q^*)-\theta^*}^2-6\epsilon(\Z_{\tau^*})-7ct\right),
\end{eqnarray*}
for $C_1=\max\{1,224/\rho\}$. We obtain the desired result by combining the bounds of (\ref{eq:pf1.1}) and (\ref{eq:pf1.2}) and setting $t=0$.
\end{proof}

\begin{proof}[Proof of (\ref{eq:main2})]
Let us first use (\ref{eq:denom-lower}) and (\ref{eq:numU-upper}) to bound the ratio $R(Z,U(t))\mathbb{I}_{F_Z(t)}/R(Z^*)$, i.e.,
\begin{eqnarray*}
\frac{R(Z,U(t))\mathbb{I}_{F_Z(t)}}{R(Z^*)} &\leq& \exp\left(-\left(M/16-(1+\lambda+\lambda^{-1})\right)\epsilon(\Z_{\tau^*})-\frac{1}{8}Mt-\frac{1}{16}\delta_2\norm{\X_{Z^*}(Q^*)-\theta^*}^2\right) \\
&\leq& \exp\left(-\frac{M}{32}\epsilon(\Z_{\tau^*})-\frac{1}{8}Mt-\frac{1}{16}\delta_2\norm{\X_{Z^*}(Q^*)-\theta^*}^2\right),
\end{eqnarray*}
for $M>\max\left\{128\delta_2^{-1}(C_1+1), 16\log(4\lambda/\sqrt{\delta_2/2})+256\delta_2^{-1}\lambda^2, 32(1+\lambda+\lambda^{-1})\right\}$. By (\ref{eq:form-pred}), we have
\begin{eqnarray}
\label{eq:pf2.1} \mathbb{E}\Pi(\X_Z(Q)\in U(t)|Y) &\leq& \sum_{\tau\in\T\cap\mathcal{A}(t)^c }\frac{\exp\left(-D\epsilon(\Z_{\tau})\right)}{\exp\left(-D\epsilon(\Z_{\tau^*})\right)}\frac{|\bar{\Z}_{\tau^*}|}{|\bar{\Z}_{\tau}|}\sum_{Z\in\bar{\Z}_{\tau}}\mathbb{E}\frac{R(Z,U(t))\mathbb{I}_{F_Z}}{R(Z^*)} \\
\label{eq:pf2.2} && + \sum_{\tau\in\T\cap\mathcal{A}(t)^c }\sum_{Z\in\bar{\Z}_{\tau}}\mathbb{P}(F_Z(t)^c) \\
\label{eq:pf2.3} && + \mathbb{E}\Pi(\tau\in\mathcal{A}(t)|Y).
\end{eqnarray}
The bound for (\ref{eq:pf2.3}) has been derived in the proof of (\ref{eq:main1}).
Using Lemma \ref{lem:growth}, we bound (\ref{eq:pf2.1}) by
\begin{eqnarray*}
&& \exp\left(-\left(\frac{M}{32}-D-1\right)\epsilon(\Z_{\tau^*})-\frac{1}{8}Mt-\frac{1}{16}\delta_2\norm{\X_{Z^*}(Q^*)-\theta^*}^2\right)\sum_{\tau\in\T\cap\mathcal{A}(t)^c }\exp\left(-D\epsilon(\Z_{\tau})\right) \\
&\leq& 6\exp\left(-\frac{M}{64}\epsilon(\Z_{\tau^*})-\frac{1}{8}Mt-\frac{1}{16}\delta_2\norm{\X_{Z^*}(Q^*)-\theta^*}^2\right),
\end{eqnarray*}
for $M>\max\left\{128\delta_2^{-1}(C_1+1), 16\log(4\lambda/\sqrt{\delta_2/2})+256\delta_2^{-1}\lambda^2, 32(1+\lambda+\lambda^{-1}),64(D+1)\right\}$.
Using Lemma \ref{lem:noise} and (\ref{eq:larger}), the term (\ref{eq:pf2.2}) is bounded by
$$2\exp\left(-\rho C_1\epsilon(\Z_{\tau^*})/16-\rho C_2\norm{\X_{Z^*}(Q^*)-\theta^*}^2-\frac{\rho t}{16}\right)\sum_{\tau\in\T\cap\mathcal{A}(t)^c }\exp\left(5\epsilon(\Z_{\tau})\right).$$
We use Lemma \ref{lem:growth} to bound the sum in the above display with $\alpha=(1+\delta_1)\epsilon(\Z_{\tau^*})+\delta_1\norm{\X_{\Z^*}(Q^*)-\theta^*}^2+ct$ and $\beta=5$.
\begin{eqnarray*}
&& \sum_{\tau\in\T\cap\mathcal{A}(t)^c }\exp\left(5\epsilon(\Z_{\tau})\right) \leq 4(\alpha+1)\exp\left(\beta(\alpha+1)\right) \leq 4e^{\beta+1}\exp\left((\beta+1)\alpha\right) \\
&=& 4e^6\exp\left(6(1+\delta_1)\epsilon(\Z_{\tau^*})+6\delta_1\norm{\X_{\Z^*}(Q^*)-\theta^*}^2+6ct\right).
\end{eqnarray*}
Therefore, we can bound (\ref{eq:pf2.2}) by
\begin{eqnarray*}
&&  8e^6\exp\left(-\left(\frac{\rho C_1}{16}-8\right)\epsilon(\Z_{\tau^*})-\left(\rho C_2-6\delta_1\right)\norm{\X_{Z^*}(Q^*)-\theta^*}^2-\left(\frac{\rho}{16}-6c\right)t\right) \\
&\leq& 8e^6\exp\left(-6\epsilon(\Z_{\tau^*})-\delta_1\norm{\X_{Z^*}(Q^*)-\theta^*}^2-\frac{\rho}{32}t\right),
\end{eqnarray*}
where we set $C_2=\delta_2^2/128$, $C_1=\max\{1,224/\rho\}$, $\delta_2=8\sqrt{14\delta_1/\rho}=8\sqrt{14\delta/\rho}$, and $c=\rho/192$. The proof is complete by combining the bounds of (\ref{eq:pf2.1}), (\ref{eq:pf2.2}) and (\ref{eq:pf2.3}) and setting $t=0$.
\end{proof}

\begin{proof}[Proof of (\ref{eq:main33})]
In the proof of (\ref{eq:main2}), we obtain a general bound for $\mathbb{E}\Pi(\X_Z(Q)\in U(t)|Y)$ for any $t\geq 0$. The result of (\ref{eq:main33}) can be obtained by integrating out the tail probability $\mathbb{E}\Pi(\X_Z(Q)\in U(t)|Y)$. The details of the argument is given in Section \ref{sec:app-pf-9} in the supplement.
\end{proof}

\section*{Acknowledgement}

This work was done during the first author's visit in Leiden University in 2015. Many ideas were originated from the weekly problem sessions with Johannes Schmidt-Hieber and Kolyan Ray. Johannes Schmidt-Hieber suggested using $\ell_2$ norm in the exponent of the prior. Isma{\"e}l Castillo pointed out that the rate in Corollary \ref{cor:slr-pred} can be improved, which leads to Corollary \ref{cor:general-agg}.

\bibliographystyle{plainnat}
\bibliography{reference}

\newpage

\begin{center}
{\Large Supplement to ``A General Framework for Bayes Structured Linear Models''}\\
~\\
Chao Gao, Aad W. van der Vaart \& Harrison H. Zhou
\end{center}

\appendix

\section{Elliptical Laplace distribution}\label{sec:ELD}

Consider a distribution in $\mathbb{R}^{\ell(\Z_{\tau})}$, whose density function is proportional to $\exp\left(-\lambda\|\X_Z(Q)\|\right)$. The design matrix $\X_Z\in\mathbb{R}^{N\times \ell(\Z_{\tau})}$ is of rank $\ell(\Z_{\tau})$, which is no greater then $N$. Consider the singular value decomposition of $\X_Z$, which is $\X_Z=\mathcal{U}\Lambda \mathcal{V}^T$, where $\mathcal{U}\in\mathbb{R}^{N\times \ell(\Z_{\tau})}$ and $\mathcal{V}\in\mathbb{R}^{\ell(\Z_{\tau})\times \ell(\Z_{\tau})}$ are orthonormal matrices, and $\Lambda$ is an $\ell(\Z_{\tau})\times \ell(\Z_{\tau})$ diagonal matrix with positive diagonal entries. The normalizing constant of the distribution is determined by the integral $\int_{\mathbb{R}^{\ell(\Z_{\tau})}}\exp\left(-\lambda\|\X_Z(Q)\|\right)dQ$. Since $\mathcal{U}$ is orthonormal, we have
$$\int_{\mathbb{R}^{\ell(\Z_{\tau})}}\exp\left(-\lambda\|\X_Z(Q)\|\right)dQ=\int_{\mathbb{R}^{\ell(\Z_{\tau})}}\exp\left(-\lambda\|\Lambda \mathcal{V}^TQ\|\right)dQ.$$
Consider a change of variable $b=\Lambda \mathcal{V}^TQ\in\mathbb{R}^{\ell(\Z_{\tau})}$, and we have
$$\int_{\mathbb{R}^{\ell(\Z_{\tau})}}\exp\left(-\lambda\|\Lambda \mathcal{V}^TQ\|\right)dQ=\frac{1}{\sqrt{\det(\mathcal{V}\Lambda^2\mathcal{V}^T)}}\int_{\mathbb{R}^{\ell(\Z_{\tau})}}e^{-\lambda\|b\|}db.$$
It is easy to see that $\det(\mathcal{V}\Lambda^2\mathcal{V}^T)=\det(\X_Z^T\X_Z)$. Moreover,
$$\int_{\mathbb{R}^{\ell(\Z_{\tau})}}e^{-\lambda\|b\|}db=\frac{2\pi^{\ell(\Z_{\tau})/2}}{\Gamma(\ell(\Z_{\tau})/2)}\int r^{\ell(\Z_{\tau})-1}e^{-\lambda r}dr=2\left(\frac{\sqrt{\pi}}{\lambda}\right)^{\ell(\Z_{\tau})}\frac{\Gamma(\ell(\Z_{\tau}))}{\Gamma(\ell(\Z_{\tau})/2)}.$$
Therefore,
$$\int_{\mathbb{R}^{\ell(\Z_{\tau})}}\exp\left(-\lambda\|\X_Z(Q)\|\right)dQ=\frac{2}{\sqrt{\det(\X_Z^T\X_Z)}}\left(\frac{\sqrt{\pi}}{\lambda}\right)^{\ell(\Z_{\tau})}\frac{\Gamma(\ell(\Z_{\tau}))}{\Gamma(\ell(\Z_{\tau})/2)},$$
and thus, the density function of the elliptical Laplace distribution is
$$\frac{\sqrt{\det(\X_Z^T\X_Z)}}{2}\left(\frac{\lambda}{\sqrt{\pi}}\right)^{\ell(\Z_{\tau})}\frac{\Gamma(\ell(\Z_{\tau})/2)}{\Gamma(\ell(\Z_{\tau}))}e^{-\lambda\|\X_Z(Q)\|}.$$

\section{Proof of (\ref{eq:main33}) in Theorem \ref{thm:main}}\label{sec:app-pf-9}

In the proof of (\ref{eq:main2}), we obtain a general bound for $\mathbb{E}\Pi(\X_Z(Q)\in U(t)|Y)$ for any $t\geq 0$. That is,
\begin{eqnarray}
\nonumber && \mathbb{E}\Pi\left(\norm{\X_Z(Q)-\theta^*}^2>(1+\delta_2)\norm{\X_{Z^*}(Q^*)-\theta^*}^2+M\epsilon(\Z_{\tau^*})+Mt\Big|Y\right) \\
\label{eq:main3} &\leq& \exp\left(-\tilde{C}\left(\epsilon(\Z_{\tau^*})+\norm{\X_{Z^*}(Q^*)-\theta^*}^2+t\right)\right),
\end{eqnarray}
for some constant $\tilde{C}>0$.

Now we give a bound for the risk of the posterior mean. First, using Jensen's inequality, we get
$$\mathbb{E}\|\mathbb{E}_{\Pi}(\X_Z(Q)|Y)-\theta^*\|^2\leq \mathbb{E}\mathbb{E}_{\Pi}\left(\|\X_Z(Q)-\theta^*\|^2|Y\right).$$
By the property of expectation, we have
$$\mathbb{E}\mathbb{E}_{\Pi}\left(\|\X_Z(Q)-\theta^*\|^2|Y\right)=\int_0^{\infty}\mathbb{E}\Pi\left(\|\X_Z(Q)-\theta^*\|^2>x|Y\right)dx.$$
Therefore, it is sufficient to bound the integral on the right hand side of the above equality. Define
$$R=(1+\delta_2)\norm{\X_{Z^*}(Q^*)-\theta^*}^2+M\epsilon(\Z_{\tau^*}).$$
Then,
\begin{eqnarray*}
&& \int_0^{\infty}\mathbb{E}\Pi\left(\|\X_Z(Q)-\theta^*\|^2>x|Y\right)dx \\
&=& \int_0^{R}\mathbb{E}\Pi\left(\|\X_Z(Q)-\theta^*\|^2>x|Y\right)dx + \int_R^{\infty}\mathbb{E}\Pi\left(\|\X_Z(Q)-\theta^*\|^2>x|Y\right)dx \\
&\leq& R + \int_R^{\infty}\mathbb{E}\Pi\left(\|\X_Z(Q)-\theta^*\|^2>x|Y\right)dx \\
&=& R + \int_0^{\infty}\mathbb{E}\Pi\left(\|\X_Z(Q)-\theta^*\|^2>R+x|Y\right)dx \\
&\leq& R + \int_0^{\infty}\exp\left(-\tilde{C}\left(\epsilon(\Z_{\tau^*})+\norm{\X_{Z^*}(Q^*)-\theta^*}^2+\frac{x}{M}\right)\right)dx \\
&=& R + \frac{M}{\tilde{C}}\exp\left(-\tilde{C}\left(\epsilon(\Z_{\tau^*})+\norm{\X_{Z^*}(Q^*)-\theta^*}^2\right)\right).
\end{eqnarray*}
This completes the proof.

\section{Proofs of corollaries}

\begin{proof}[Proofs of Corollary \ref{cor:main} and Corollaries \ref{cor:SBM}-\ref{cor:dic}]
Corollary \ref{cor:main} is a direct consequence of Theorem \ref{thm:main} by letting $\theta^*=\X_{Z^*}(Q^*)$. Except Corollary \ref{cor:slr-esti}, Corollaries \ref{cor:SBM}-\ref{cor:dic} are special cases of Corollary \ref{cor:main} in different model settings. By the definitions of $\kappa_1$ and $\kappa_2$, we have $\norm{\beta-\beta^*}^2\leq \kappa_2^{-2}\norm{X\beta-X\beta^*}^2/n$ and $\norm{\beta-\beta^*}^2_1\leq \kappa_1^{-2}s^*\norm{X\beta-X\beta^*}^2/n$, which implies Corollary \ref{cor:slr-esti} from Corollary \ref{cor:slr-pred}.
\end{proof}

\begin{proof}[Proof of Corollary \ref{cor:graphon}]
For any $\xi$, recall that $f(\xi_i,\xi_j)=\theta_{ij}=Q_{z(i)z(j)}$. Then, (\ref{eq:main2}) of Theorem \ref{thm:main} implies that
$$\sum_{i,j}\left(f(\xi_i,\xi_j)-f^*(\xi_i,\xi_j)\right)^2\leq (1+\delta_2)\sum_{i,j}\left(Q^*_{z^*(i)z^*(j)}-f^*(\xi_i,\xi_j)\right)^2+M\left((k^*)^2+n\log k^*\right)$$
under the posterior distribution for any $k^*\in[n]$, any $z^*\in\bar{\Z}_{k^*}$ and any $Q^*\in\mathbb{R}^{(k^*)^2}$.  Lemma 2.1 of \cite{gao2014rate} implies there exist some $z^*\in[k^*]^n$ and some $Q^*\in\mathbb{R}^{(k^*)^2}$ such that
$$\sum_{i,j}\left(Q^*_{z^*(i)z^*(j)}-f^*(\xi_i,\xi_j)\right)^2\leq C_3L^2n^2\left(\frac{1}{k^*}\right)^{\alpha\wedge 1},$$
for any $f^*\in\mathcal{F}_{\alpha}(L)$ and some absolute constant $C_3>0$. Therefore,
$$\frac{1}{n^2}\sum_{i,j}\left(f(\xi_i,\xi_j)-f^*(\xi_i,\xi_j)\right)^2\leq M'\left(\left(\frac{1}{k^*}\right)^{\alpha\wedge 1}+\left(\frac{k^*}{n}\right)^2+\frac{\log k^*}{n}\right).$$
The proof is complete by choosing $k^*=\ceil{n^{\frac{1}{\alpha\wedge 1+1}}}$.
\end{proof}

To prove Corollary \ref{cor:slrq-pred}, we need the following result, which is Lemma 7.2 of \cite{tsybakov2014aggregation}.
\begin{lemma}\label{lem:maurey}
Assume $\max_{j\in[p]}n^{-1/2}\|X_{*j}\|\leq L$ for some constant $L>0$. For any $\beta^*\in\mathcal{B}_q(k)$ with $q\in(0,1]$, and any $s\in[p]$, there exists a $\beta_0\in\mathcal{B}_0(2s)$, such that
$$\frac{1}{n}\|X\beta^*-X\beta_0\|^2\leq L^2k^{2/q}s^{1-2/q}.$$
\end{lemma}

\begin{proof}[Proof of Corollary \ref{cor:slrq-pred}]
The case $q=0$ is Corollary \ref{cor:slr-pred}. We consider $q\in(0,1]$. For the effective sparsity defined in Section \ref{sec:weak-ball}, (\ref{eq:main2}) of Theorem \ref{thm:main} implies that
$$\norm{X\beta-X\beta^*}^2\leq (1+\delta_2)\norm{X\beta_0-X\beta^*}^2+Ms^*\log\frac{ep}{s^*}$$
under the posterior distribution for any $\beta_0\in\mathcal{B}_0(s^*)$. We choose $\beta_0\in\mathcal{B}_0(s^*)$ so that the bound of Lemma \ref{lem:maurey} is satisfied.
This leads to
\begin{equation}
\frac{1}{n}\norm{X\beta-X\beta^*}^2\leq M'\left(k^{2/q}(s^*)^{1-2/q}+\frac{s^*\log\frac{ep}{s^*}}{n}\right).\label{eq:mmm}
\end{equation}
Since $s^*=\ceil{x^*}$, we have $s^*+1>x^*$. By the definition, $x^*$ is the largest number $x$ such that $x\leq k\left(\frac{n}{\log(ep/x)}\right)^{q/2}$. This implies $s^*+1>k\left(\frac{n}{\log(ep/(s^*+1))}\right)^{q/2}$. After rearrangement, we get
$$\left(\frac{k}{s^*+1}\right)^{q/2}<\frac{\log\frac{ep}{s^*+1}}{n},$$
which implies that $\frac{s^*\log\frac{ep}{s^*}}{n}$ dominates $k^{2/q}(s^*)^{1-2/q}$ in (\ref{eq:mmm}). This leads to the desired result.
\end{proof}

\begin{proof}[Proof of Corollary \ref{cor:wavelet}]
For every $j<\log_2n$, the model induced by the prior can be represented in the general framework by letting $Z_j=S_j$, $\tau_j=s_j$, $\T_j=[2^j]$, $\Z_{s_j}=\{S_j\subset[2^j]:|S_j|=s_j\}$, $\ell(\Z_{s_j})=s_j$ and $Q_j=\theta_{jS_j}$. Then, we have the representation $\X_{Z_j}(Q_j)=\sqrt{n}(\theta_{jS_j}^T,0^T_{jS_j^c})^T$. The complexity function is $\epsilon_j(\Z_{s_j})=2s_j\log\frac{e2^j}{s_j}$, which satisfies (\ref{eq:larger}) and (\ref{eq:capacity}). By (\ref{eq:main3}) and letting $t=n^{\frac{1}{2\alpha+1}}/\log_2n$, we have
\begin{eqnarray*}
&& \mathbb{E}\Pi\left(n\norm{\theta_{j*}-\theta_{j*}^*}^2>(1+\delta_2)n\norm{\bar{\theta}_{j*}-\theta_{j*}^*}^2+2Ms_j^*\log\frac{e2^j}{s_j^*}+\frac{n^{\frac{1}{2\alpha+1}}}{\log_2n}\Big|Y_{j*}\right) \\
&\leq& \exp\left(-C''\frac{n^{\frac{1}{2\alpha+1}}}{\log_2n}\right),
\end{eqnarray*}
for any $\bar{\theta}_{j*}\in\mathbb{R}^{2^j}$ with sparsity $s_j^*$. 
We define a control function in \cite{johnstone2011gaussian}[Equation (11.35)], $r_{m,p}(t,\epsilon)=\epsilon^2r_{m,p}(t/\epsilon)$. For $p<2$,
$$r_{m,p}(t)=\begin{cases}
t^2, & t\leq\sqrt{1+\log m},\\
t^p[1+\log(m/t^p)]^{1-p/2}, & \sqrt{1+\log m}\leq t\leq m^{1/p}, \\
m, & t\geq m^{1/p}.
\end{cases}$$
For $p\geq 2$,
$$r_{m,p}(t)=\begin{cases}
m^{1-2/p}t^2, & t\leq m^{1/p},\\
m, & t\geq m^{1/p}.
\end{cases}$$
Since $\theta^*\in\Theta_{p,q}^{\alpha}(L)$ implies $\norm{\theta_{j*}^*}_p\leq L2^{-aj}$, we have
$$\norm{\bar{\theta}_{j*}-\theta_{j*}^*}^2\leq C^*r_{2^j,p}(L2^{-aj},n^{-1/2})$$
for some absolute constant $C^*>0$ by the proof of Theorem 11.7 in \cite{johnstone2011gaussian}. Therefore,
$$\mathbb{E}\Pi(G_j^c|Y_{j*})\leq \exp\left(-C''\frac{n^{\frac{1}{2\alpha+1}}}{\log_2n}\right)$$
for all $j<\log_2n$, where
$$G_j=\left\{\norm{\theta_{j*}-\theta_{j*}^*}^2\leq M'r_{2^j,p}(L2^{-aj},n^{-1/2})+\frac{n^{-\frac{2\alpha}{2\alpha+1}}}{\log_2n}\right\}.$$
Moreover, $\Pi(\theta_{j*}=0|Y_{j*})=1$ for all $j\geq \log_2n$ by the definition of the prior. Using the independence structure of the posterior distribution, we have
\begin{eqnarray*}
&&\mathbb{E}\Pi\left(\left(\cap_{j<\log_2n}G_j\right)^c|Y\right) \leq \sum_{j<\log_2n}\mathbb{E}\Pi(G_j^c|Y) = \sum_{j<\log_2n}\mathbb{E}\Pi(G_j^c|Y_{j*}) \\
&\leq& (\log_2n)\exp\left(-C''\frac{n^{\frac{1}{2\alpha+1}}}{\log_2n}\right) \leq \exp\left(-\bar{C}\frac{n^{\frac{1}{2\alpha+1}}}{\log n}\right).
\end{eqnarray*}
Finally, the event $\cap_{j<\log_2n}G_j$ and $\theta_{j*}=0$ for all $j\geq\log_2n$ implies
\begin{eqnarray*}
\norm{\theta-\theta^*}^2 &\leq& \sum_{j<\log_2n}\norm{\theta_{j*}-\theta_{j*}^*}^2+\sum_{j\geq\log_2n}\norm{\theta_{j*}^*}^2 \\
&\leq& M'\sum_{j<\log_2n}\left(r_{2^j,p}(L2^{-aj},n^{-1/2})+\frac{n^{-\frac{2\alpha}{2\alpha+1}} }{\log_2n}\right)+\sum_{j\geq\log_2n}\norm{\theta_{j*}^*}^2\\
&\leq& M''n^{-\frac{2\alpha}{2\alpha+1}},
\end{eqnarray*}
where the last inequality follows the proof of Theorem 12.1 in  \cite{johnstone2011gaussian} under the assumption $\alpha\geq \frac{1}{p}$. Hence, the proof is complete.
\end{proof}

\begin{proof}[Proof of Corollary \ref{cor:sobolev}]
Let us write the model induced by the prior distribution in the general framework by letting $Z=[k]$, $\tau=k$, $\T=[n]$, $\Z_k=\{[k]\}$, $\ell(\Z_k)=k$ and $Q=\theta_{[k]}$. Then, we have the representation $\X_Z(Q)=\sqrt{n}(\theta_{[k]}^T,0_{[k]^c}^T)^T$. The complexity function $\epsilon(\Z_k)$ is $2k$, which satisfies (\ref{eq:larger}) and (\ref{eq:capacity}). Then, (\ref{eq:main2}) of Theorem \ref{thm:main} implies that
$$\mathbb{E}\Pi\left(n\norm{\theta-\theta^*}^2>(1+\delta_2)n\norm{\bar{\theta}-\theta^*}^2+2Mk^*\Big|Y\right)\leq \exp\left(-C''\left(k^*+\norm{\bar{\theta}-\theta^*}^2\right)\right)$$
for any $\bar{\theta}$ satisfying $\bar{\theta}_j=0$ for $j>k^*$. Since $\theta^*\in\mathcal{S}_{\alpha}(L)$, there exists some $\bar{\theta}$ satisfying $\bar{\theta}_j=0$ for $j>k^*$ such that $\norm{\bar{\theta}-\theta^*}^2\leq L^2(k^*)^{-2\alpha}$. Therefore, $\norm{\theta-\theta^*}^2\leq M'\left((k^*)^{-2\alpha}+\frac{k^*}{n}\right)$ under the posterior distribution. Let $k^*=\ceil{n^{\frac{1}{2\alpha+1}}}$, and the proof is complete.
\end{proof}

\begin{proof}[Proof of Corollary \ref{cor:general-agg}]
Note that the model induced by the prior distribution can be written in a general way by letting $Z=S$, $\tau=s$, $\T=[r]$, $\Z_s=\{S\subset[p]:|S|=s\}$ if $s<r$ and $\Z_r=\{[r]\}$, $\ell(\Z_s)=s$ and $Q=\beta_S$. Then, we have the representation $\X_Z(Q)=X_{*S}\beta_S=X\beta$. The complexity function we choose is $\epsilon(\Z_s)=2s\log\frac{ep}{s}$ for $s<r$ and $\epsilon(\Z_r)=2r$. It is easy to check that $\epsilon(\Z_s)$ satisfies (\ref{eq:larger}) and (\ref{eq:capacity}). Using (\ref{eq:main2}) of Theorem \ref{thm:main}, we have
\begin{eqnarray*}
&& \mathbb{E}\Pi\left(\norm{f_{\beta}-f^*}_n^2>(1+\delta_2)\norm{f_{\beta^*}-f^*}_n^2+2M\frac{s^*\log(ep/s^*)}{n}\Big|Y\right) \\
&\leq& \exp\left(-C''\left(n\norm{f_{\beta^*}-f^*}_n^2+s^*\log\frac{ep}{s^*}\right)\right),
\end{eqnarray*}
for any $\beta^*$ with sparsity $s^*$. For this $\beta^*$, there exists some $\beta_1$ such that $\supp(\beta_1)\subset[r]$ and $f_{\beta^*}=f_{\beta_1}$. Therefore, (\ref{eq:main2}) of Theorem \ref{thm:main} implies
\begin{eqnarray*}
&& \mathbb{E}\Pi\left(\norm{f_{\beta}-f^*}_n^2>(1+\delta_2)\norm{f_{\beta_1}-f^*}_n^2+2M\frac{r}{n}\Big|Y\right) \\
&\leq& \exp\left(-C''\left(n\norm{f_{\beta_1}-f^*}_n^2+r\right)\right).
\end{eqnarray*}
Combining the two results by union bound, the proof is complete.
\end{proof}

\begin{proof}[Proof of Corollary \ref{cor:universal}]
Using the corresponding arguments in \cite{rigollet2012sparse,tsybakov2014aggregation}, Corollary \ref{cor:universal} is implied by Corollary \ref{cor:general-agg}.
\end{proof}

\section{Proofs of technical results}\label{sec:pftech}

\begin{proof}[Proof of Lemma \ref{lem:noise}]
Consider $\bar{Q}_Z$ defined in (\ref{eq:defQZ}). Then, we have the bound
\begin{eqnarray*}
&& \left|\iprod{W}{\X_Z(Q)-\X_{Z^*}(Q^*)}\right| \\
&\leq& \norm{\X_Z(Q)-\X_Z(\bar{Q}_Z)}\left|\iprod{W}{\frac{\X_Z(Q)-\X_Z(\bar{Q}_Z)}{\norm{\X_Z(Q)-\X_Z(\bar{Q}_Z)}}}\right| \\
&& + \norm{\X_Z(\bar{Q}_Z)-\X_{Z^*}(Q^*)}\left|\iprod{W}{\frac{\X_Z(\bar{Q}_Z)-\X_{Z^*}(Q^*)}{\norm{\X_Z(\bar{Q}_Z)-\X_{Z^*}(Q^*)}}}\right| \\
&\leq& \max\left\{\left|\iprod{W}{\frac{\X_Z(Q)-\X_Z(\bar{Q}_Z)}{\norm{\X_Z(Q)-\X_Z(\bar{Q}_Z)}}}\right|,\left|\iprod{W}{\frac{\X_Z(\bar{Q}_Z)-\X_{Z^*}(Q^*)}{\norm{\X_Z(\bar{Q}_Z)-\X_{Z^*}(Q^*)}}}\right|\right\} \\
&& \times\sqrt{2}\sqrt{\norm{\X_Z(Q)-\X_Z(\bar{Q}_Z)}^2+\norm{\X_Z(\bar{Q}_Z)-\X_{Z^*}(Q^*)}^2} \\
&=&\sqrt{2} \max\left\{\left|\iprod{W}{\frac{\X_Z(Q)-\X_Z(\bar{Q}_Z)}{\norm{\X_Z(Q)-\X_Z(\bar{Q}_Z)}}}\right|,\left|\iprod{W}{\frac{\X_Z(\bar{Q}_Z)-\X_{Z^*}(Q^*)}{\norm{\X_Z(\bar{Q}_Z)-\X_{Z^*}(Q^*)}}}\right|\right\}  \norm{\X_Z(Q)-\X_{Z^*}(Q^*)},
\end{eqnarray*}
where the last equality is due to (\ref{eq:pyth}).
By (\ref{eq:subG}), $\left|\iprod{W}{\frac{\X_Z(\bar{Q}_Z)-\X_{Z^*}(Q^*)}{\norm{\X_Z(\bar{Q}_Z)-\X_{Z^*}(Q^*)}}}\right|\leq \frac{1}{\sqrt{2}}\sqrt{\epsilon^*(\Z_{\tau})+t}$ with probabilty at least $1-\exp(-\rho (\epsilon^*(\Z_{\tau})+t)/4)$. Now it is sufficient to bound
$$\sup_{Q\in\mathbb{R}^{\ell(\Z_{\tau})}}\left|\iprod{W}{\frac{\X_Z(Q)-\X_Z(\bar{Q}_Z)}{\norm{\X_Z(Q)-\X_Z(\bar{Q}_Z)}}}\right|=\sup_{\{Q\in\mathbb{R}^{\ell(\Z_{\tau})}: \norm{\X_Z(Q)}\leq 1\}}\left|\iprod{W}{\X_Z(Q)}\right|.$$
A standard discretization argument as Lemma A.1 in \cite{gao2014rate} gives
\begin{equation}
\sup_{\{Q\in\mathbb{R}^{\ell(\Z_{\tau})}: \norm{\X_Z(Q)}\leq 1\}}\left|\iprod{W}{\X_Z(Q)}\right|\leq 2\max_{1\leq l\leq L}\left|\iprod{W}{\X_Z(Q_l)}\right|,\label{eq:ref-asked}
\end{equation}
where $\{Q_l\}_{1\leq l\leq L}$ is a subset of $\{Q\in\mathbb{R}^{\ell(\Z_{\tau})}: \norm{\X_Z(Q)}\leq 1\}$ such that for any $Q\in\mathbb{R}^{\ell(\Z_{\tau})}$ with $\norm{\X_Z(Q)}\leq 1$, there exists an $l\in[L]$ that satisfies $\norm{\X_Z(Q-Q_l)}\leq 1/2$ and a covering number argument gives the bound $L\leq \exp\left(5\ell(\Z_{\tau})\right)$. We will give a rigorous argument for (\ref{eq:ref-asked}) at the end of the proof. Using union bound together with (\ref{eq:subG}), we have 
\begin{eqnarray*}
&& \mathbb{P}\left(\max_{1\leq l\leq L}\left|\iprod{W}{\X_Z(Q_l)}\right|> \frac{1}{2\sqrt{2}}\sqrt{\epsilon^*(\Z_{\tau})+t}\right) \\
&\leq& \sum_{l=1}^L\mathbb{P}\left(\left|\iprod{W}{\X_Z(Q_l)}\right|> \frac{1}{2\sqrt{2}}\sqrt{\epsilon^*(\Z_{\tau})+t}\right) \\
&\leq& \exp\left(5\ell(\Z_{\tau})-\rho (\epsilon^*(\Z_{\tau})+t)/16\right) \\
&\leq& \exp\left(-(\rho C_1/16-5)\epsilon(\Z_{\tau})-\rho C_2\norm{\X_{Z^*}(Q^*)-\theta^*}^2/16-\rho t/16\right),
\end{eqnarray*}
where we have used the condition (\ref{eq:larger}).
Using union bound again, we have
$$\sqrt{2}\max\left\{\left|\iprod{W}{\frac{\X_Z(Q)-\X_Z(\bar{Q}_Z)}{\norm{\X_Z(Q)-\X_Z(\bar{Q}_Z)}}}\right|,\left|\iprod{W}{\frac{\X_Z(\bar{Q}_Z)-\X_{Z^*}(Q^*)}{\norm{\X_Z(\bar{Q}_Z)-\X_{Z^*}(Q^*)}}}\right|\right\}\leq \sqrt{\epsilon^*(\Z_{\tau})+t},$$
with probability at least $1-2\exp\left(-(\rho C_1/16-5)\epsilon(\Z_{\tau})-\rho C_2\norm{\X_{Z^*}(Q^*)-\theta^*}^2/16-\rho t/16\right).$ This leads to the bound $\mathbb{P}(E_Z^c)\leq 2\exp\left(-(\rho C_1/16-5)\epsilon(\Z_{\tau})-\rho C_2\norm{\X_{Z^*}(Q^*)-\theta^*}^2/16-\rho t/16\right)$. A similar argument also leads to the bound
$$\mathbb{P}(F_Z^c)\leq 2\exp\left(5\ell(\Z_{\tau})-\rho C_1\epsilon(\Z_{\tau^*})/16-\rho C_2\norm{\X_{Z^*}(Q^*)-\theta^*}^2/16-\rho t/16\right).$$

Finally, we provide a argument for (\ref{eq:ref-asked}). Note that this discretization technique is standard in the literature \citep{vershynin2010introduction,tao2012topics}. We still give a proof here to be self-contained. Consider a subset $\mathcal{Q}=\{Q_1,...,Q_L\}\subset\{Q\in\mathbb{R}^{\ell(\Z_{\tau})}: \norm{\X_Z(Q)}\leq 1\}$, such that for any $Q\in\mathbb{R}^{\ell(\Z_{\tau})}$ with $\norm{\X_Z(Q)}\leq 1$, there exists a $Q_l\in\mathcal{Q}$ that satisfies $\norm{\X_Z(Q-Q_l)}\leq 1/2$. In particular, we choose $\mathcal{Q}$ to be the set with such property with the smallest cardinality $|\mathcal{Q}|=L$. Thus, the number $L$ is called the covering number, whose bound is given by the quantity of volume ratio \citep{geer2000empirical},
$$L\leq \frac{\text{Vol}\left(\{Q\in\mathbb{R}^{\ell(\Z_{\tau})}:\norm{\X_Z(Q)}\leq 5/4\}\right)}{\text{Vol}\left(\{Q\in\mathbb{R}^{\ell(\Z_{\tau})}:\norm{\X_Z(Q)}\leq 1/4\}\right)}\leq 5^{\ell(\Z_{\tau})}\leq \exp\left(5\ell(\Z_{\tau})\right).$$
To derive (\ref{eq:ref-asked}), let $Q_l\in\mathcal{Q}$ be the one that is close to $Q$ such that $\norm{\X_Z(Q-Q_l)}\leq 1/2$. Then,
\begin{eqnarray*}
\left|\iprod{W}{\X_Z(Q)}\right| &\leq& \left|\iprod{W}{\X_Z(Q_l)}\right| + \left|\iprod{W}{\X_Z(Q-Q_l)}\right| \\
&=& \left|\iprod{W}{\X_Z(Q_l)}\right| + \norm{\X_Z(Q-Q_l)}\left|\iprod{W}{\frac{\X_Z(Q-Q_l)}{\norm{\X_Z(Q-Q_l)}}}\right| \\
&\leq& \left|\iprod{W}{\X_Z(Q_l)}\right| + \frac{1}{2}\sup_{\{Q\in\mathbb{R}^{\ell(\Z_{\tau})}: \norm{\X_Z(Q)}\leq 1\}}\left|\iprod{W}{\X_Z(Q)}\right|.
\end{eqnarray*}
Taking $\sup$ with respect to $Q$ and $\max$ with respect to $Q_l$, we get
\begin{eqnarray*}
&& \sup_{\{Q\in\mathbb{R}^{\ell(\Z_{\tau})}: \norm{\X_Z(Q)}\leq 1\}}\left|\iprod{W}{\X_Z(Q)}\right| \\
&\leq& \max_{1\leq l\leq L}\left|\iprod{W}{\X_Z(Q_l)}\right| + \frac{1}{2}\sup_{\{Q\in\mathbb{R}^{\ell(\Z_{\tau})}: \norm{\X_Z(Q)}\leq 1\}}\left|\iprod{W}{\X_Z(Q)}\right|,
\end{eqnarray*}
which leads to (\ref{eq:ref-asked}) after rearrangement.
\end{proof}

\begin{proof}[Proof of Lemma \ref{lem:growth}]
The first inequality holds because
\begin{eqnarray*}
\sum_{\{\tau\in\T:\epsilon(\Z_{\tau})\leq\alpha\}} \exp\left(\beta\epsilon(\Z_{\tau})\right) &\leq& \sum_{t=1}^{\ceil{\alpha}}\sum_{\{\tau\in\T: t-1<\epsilon(\Z_{\tau})\leq t\}}e^{\beta \epsilon(\Z_{\tau})} + e^{\beta} \\
&\leq& \sum_{t=1}^{\ceil{\alpha}}te^{\beta t} + e^{\beta} \\
&\leq& 2\ceil{\alpha}\frac{e^{\beta}}{e^{\beta}-1}e^{\beta\ceil{\alpha}} \\
&\leq& 4\ceil{\alpha}\exp(\beta\ceil{\alpha}),
\end{eqnarray*}
by $\beta\geq 2$. The second inequality holds because
\begin{eqnarray}
\nonumber \sum_{\{\tau\in\T:\epsilon(\Z_{\tau})>\alpha\}}\exp\left(-\beta\epsilon(\Z_{\tau})\right) &\leq& \sum_{t=\floor{\alpha}}^{\infty}\sum_{\{\tau\in\T:t<\epsilon(\Z_{\tau})\leq t+1\}}e^{-\beta\epsilon(\Z_{\tau})} \\
\nonumber &\leq& \sum_{t=\floor{\alpha}}^{\infty}(t+1)e^{-\beta t} \\
\label{eq:logt/t} &\leq& 2\sum_{t=\floor{\alpha}}^{\infty}\exp\left(-\left(\beta-\frac{\log\floor{\alpha}}{\floor{\alpha}}\right)t\right) \\
\nonumber &\leq& 4\alpha\exp(\beta\floor{\alpha}),
\end{eqnarray}
for $\beta\geq 2$ and $\alpha\geq 1$. The inequality (\ref{eq:logt/t}) is because $\log t\leq \frac{\log\floor{\alpha}}{\floor{\alpha}}t$ for all $t\geq\floor{\alpha}$. Finally,
\begin{eqnarray*}
\sum_{\{\tau\in\T:\epsilon(\Z_{\tau})\leq\alpha\}} \exp\left(-\beta\epsilon(\Z_{\tau})\right) &\leq& 1+\sum_{t=1}^{\infty}te^{-\beta(t-1)} \\
&\leq& 1+e^{\beta}\sum_{t=1}^{\infty}e^{-(\beta-1)t} \\
&\leq& 6,
\end{eqnarray*}
for $\beta\geq 2$.
\end{proof}

\section{Proofs in Section \ref{sec:more}}\label{sec:pfmore}

\begin{proof}[Proof of Theorem \ref{thm:sup}]
The assumption $\tau s^*\leq 1/9$ and the argument in the proof of Theorem 1 of \cite{lounici2008sup} implies
\begin{equation}
\max_{|S|\leq (2+\delta)s^*}\opnorm{\left(n^{-1}X_{*S}^TX_{*S}\right)^{-1}}\leq \max_{|S|\leq (2+\delta)s^*}\norm{\left(n^{-1}X_{*S}^TX_{*S}\right)^{-1}}_{\ell_1}\leq 4\label{eq:lounici}
\end{equation}
for $\delta\leq 1/4$.
Define $\hat{\beta}_S=\min_b\norm{Y-X_{*S}b}^2$. Then it is easy to see that $\norm{Y-X_{*S}\beta_S}^2=\norm{Y-X_{*S}\hat{\beta}_S}^2+\norm{X_{*S}(\beta_S-\hat{\beta}_S)}^2$. Define the distribution $\mathcal{L}(\hat{\beta}_S,X_{*S},\lambda)$ of $\beta_S$ that has density function
\begin{equation}
\frac{\exp\left(-\frac{1}{2}\norm{X_{*S}\beta_S-X_{*S}\hat{\beta}_S}^2-\lambda\norm{X_{*S}\beta_S}\right)}{\int\exp\left(-\frac{1}{2}\norm{X_{*S}\beta_S-X_{*S}\hat{\beta}_S}^2-\lambda\norm{X_{*S}\beta_S}\right)d\beta_S}.\label{eq:sup-density}
\end{equation}
Then, according to the formula of the posterior distribution, to sample $\beta$ from the posterior distribution is equivalent to first sample $S$ from $\Pi(S|Y)$ and then sample $\beta_S\sim \mathcal{L}(\hat{\beta}_S,X_{*S},\lambda)$ to form $\beta^T=(\beta_S^T,0_{S^c}^T)$. Hence, the posterior distribution can be represented as
$$\sum_S\Pi(S|Y)\Pi_S(\cdot|Y)=\sum_S\omega(S) \mathcal{L}(\hat{\beta}_S,X_{*S},\lambda)\otimes\delta_{S^c},$$
where $\Pi(S|Y)=\omega(S)$ and $\Pi_S(\cdot|Y)= \mathcal{L}(\hat{\beta}_S,X_{*S},\lambda)\otimes\delta_{S^c}$ with
\begin{equation}
\omega(S)\propto \frac{\pi(|S|)}{|\bar{\Z}_{|S|}|}\left(\frac{\lambda}{\sqrt{\pi}}\right)^{|S|}\mathcal{N}_{X_{*S}\hat{\beta}_S,\lambda}e^{-\frac{1}{2}\norm{Y-X_{*S}\hat{\beta}_S}^2}\mathbb{I}\{|\bar{\Z}_{|S|}|>0\}.\label{eq:post-omega}
\end{equation}
The number $\mathcal{N}_{y,\lambda}$ for any vector $y$ and any scalar $\lambda$ is defined as
\begin{equation}
\mathcal{N}_{y,\lambda}=\int\exp\left(-\frac{1}{2}\norm{t-y}^2-\lambda\norm{t}\right)dt.\label{eq:post-n}
\end{equation}
Define the event
\begin{equation}
E=\left\{\max_{j\in[p]}\left|\frac{X_{*j}^TW}{\sqrt{n}}\right|\leq C_1\sqrt{\log p}\right\}\label{eq:sup-noise}
\end{equation}
for some constant $C_1>0$ to be determined later.
We have
\begin{eqnarray}
\nonumber && \mathbb{E}\Pi\left(\norm{\beta-\beta^*}_{\infty}>M\sqrt{\frac{\log p}{n}}\Big|Y\right) \\
\nonumber &=& \mathbb{E}\sum_{|S|\leq (1+\delta)s^*}\omega(S)\Pi_S\left(\norm{\beta_S-\beta_S^*}_{\infty}\vee\norm{\beta^*_{S^c}}_{\infty}>M\sqrt{\frac{\log p}{n}}\Big|Y\right) + \mathbb{E}\Pi(|S|>(1+\delta)s^*|Y) \\
\nonumber &\leq&  \mathbb{E}\sum_{\substack{|S|\leq (1+\delta)s^*\\ \norm{\beta_{S^c}^*}_{\infty}\leq C_2\sqrt{\frac{\log p}{n}}}}\omega(S)\Pi_S\left(\norm{\beta_S-\hat{\beta}_S}_{\infty}>\frac{1}{2}M\sqrt{\frac{\log p}{n}}\Big|Y\right)\mathbb{I}_E+\mathbb{E}\sum_{\substack{|S|\leq (1+\delta)s^*\\ \norm{\beta^*_{S^c}}_{\infty}>C_2\sqrt{\frac{\log p}{n}}}}\omega(S)\mathbb{I}_E \\
\label{eq:plan} && +\mathbb{P}(E^c)+\mathbb{E}\Pi(|S|>(1+\delta)s^*|Y)
\end{eqnarray}
for some constant $C_2>0$ to be determined later.
The inequality (\ref{eq:plan}) is due to the inequality $\norm{\beta_S-\beta^*_S}_{\infty}\leq \norm{\beta_S-\hat{\beta}_S}_{\infty}+\norm{\hat{\beta}_S-\beta_S^*}$ and
\begin{equation}
E\subset\left\{\norm{\hat{\beta}_S-\beta_S^*}_{\infty}\leq \frac{1}{2}M\sqrt{\frac{\log p}{n}}\right\},\label{eq:middle}
\end{equation}
for all $S$ that satisfies $|S|\leq (1+\delta)s^*$ and $\norm{\beta_{S^c}^*}_{\infty}\leq C_2\sqrt{\frac{\log p}{n}}$.
Let us give a proof for (\ref{eq:middle}). By the definition of $\hat{\beta}_S$, we have $X_{*S}^TX_{*S}\hat{\beta}_S=X_{*S}^TY=X_{*S}^TX_{*S}\beta^*_S+X_{*S}^TX_{*S^c}\beta^*_{S^c}+X_{*S}^TW$, which implies
$$\norm{\hat{\beta}_S-\beta_S^*}_{\infty}\leq 4\norm{X_{*S}^TX_{*S}(\hat{\beta}_S-{\beta_S^*})}_{\infty}/n\leq \frac{4}{n}\norm{X_{*S}^TX_{*S^c}\beta^*_{S^c}}_{\infty}+\frac{4}{n}\norm{X_{*S}^TW}_{\infty}.$$
Note that $\frac{4}{n}\norm{X_{*S}^TX_{*S^c}\beta^*_{S^c}}_{\infty}=\frac{4}{n}\norm{X_{*S}^TX_{*S^*\cap S^c}\beta^*_{S^*\cap S^c}}_{\infty}\leq 8s^*\tau\norm{\beta^*_{S^c}}_{\infty}\leq C_2\sqrt{\frac{\log p}{n}}$ due to $\norm{\beta_{S^c}^*}_{\infty}\leq C_2\sqrt{\frac{\log p}{n}}$. We also have $\frac{4}{n}\norm{X_{*S}^TW}_{\infty}\leq \frac{4}{n}\max_{j\in[p]}|X_{*j}^TW|\leq 4C_1\sqrt{\frac{\log p}{n}}$. Therefore, (\ref{eq:middle}) is proved for some $M/2\geq 4C_1+C_2$.

In view of (\ref{eq:plan}), it is sufficient to bound the four terms in (\ref{eq:plan}). The last term is bounded as a result of (\ref{eq:slr-comp}). The third term is bounded by $p^{-\left(\frac{C_1\rho}{2}-1\right)}$ using (\ref{eq:subG}) and a union bound argument. Let us give a bound for the first term.
\begin{eqnarray}
\nonumber && \Pi_S\left(\norm{\beta_S-\hat{\beta}_S}_{\infty}>\frac{1}{2}M\sqrt{\frac{\log p}{n}}\Big|Y\right) \\
\nonumber &\leq& \sum_{j\in S}\Pi_S\left(|\beta_j-\hat{\beta}_j|>\frac{1}{2}M\sqrt{\frac{\log p}{n}}\Big|Y\right) \\
\label{eq:used-g} &\leq& \sum_{j\in S}\exp\left(-\frac{1}{2}tM\sqrt{\log p}\right)\mathbb{E}_{\Pi_S}\left(e^{\sqrt{n}t|\beta_j-\hat{\beta}_j|}\Big|Y\right),
\end{eqnarray}
where $\mathbb{E}_{\Pi_S}(\cdot|Y)$ is the posterior expectation with the distribution $\Pi_S(\cdot|Y)=\mathcal{L}(\hat{\beta}_S,X_{*S},\lambda)$ and $t>0$ is some number to be specified later. Using the formula of the density (\ref{eq:sup-density}), for any unit vector $v\in\mathbb{R}^{|S|}$, we have
\begin{eqnarray}
\nonumber && \mathbb{E}_{\Pi_S}\left(e^{\sqrt{n}tv^T(\beta_S-\hat{\beta}_S)}\Big|Y\right) \\
\nonumber &=& \frac{\int\exp\left(\sqrt{n}tv^T(\beta_S-\hat{\beta}_S)-\frac{1}{2}\norm{X_{*S}\beta_S-X_{*S}\hat{\beta}_S}^2-\lambda\norm{X_{*S}\beta_S}\right)d\beta_S}{\int\exp\left(-\frac{1}{2}\norm{X_{*S}\beta_S-X_{*S}\hat{\beta}_S}^2-\lambda\norm{X_{*S}\beta_S}\right)d\beta_S} \\
\nonumber &=& e^{\frac{1}{2}t^2\norm{(n^{-1}X_{*S}^TX_{*S})^{-1/2}v}^2}\frac{\int\exp\left(-\frac{1}{2}\norm{X_{*S}(\beta_S-\hat{\beta}_S-tn^{-1/2}(n^{-1}X_{*S}^TX_{*S})^{-1}v)}^2-\lambda\norm{X_{*S}\beta_S}\right)d\beta_S}{\int \exp\left(-\frac{1}{2}\norm{X_{*S}(\beta-\hat{\beta}_S)}^2-\lambda\norm{X_{*S}\beta_S}\right)d\beta_S} \\
\label{eq:sup-triangle} &\leq& \exp\left(\frac{1}{2}t^2\norm{(n^{-1}X_{*S}^TX_{*S})^{-1/2}v}^2+\lambda t\norm{(n^{-1}X_{*S}^TX_{*S})^{-1/2}v}\right) \\
\nonumber &\leq&  \exp\left(\frac{1}{2}\lambda^2+t^2\norm{(n^{-1}X_{*S}^TX_{*S})^{-1/2}v}^2\right) \\
\label{eq:sup-from} &\leq& \exp\left(\frac{1}{2}\lambda^2+16t^2\right),
\end{eqnarray}
where the inequality (\ref{eq:sup-triangle}) is due to a change of variable and triangle inequality and the inequality (\ref{eq:sup-from}) is by (\ref{eq:lounici}). Specializing $v$ so that $v^T(\beta_S-\hat{\beta}_S)=\pm(\beta_j-\hat{\beta}_j)$, we have
$$\mathbb{E}_{\Pi_S}\left(e^{\sqrt{n}t|\beta_j-\hat{\beta}_j|}\Big|Y\right)\leq \mathbb{E}_{\Pi_S}\left(e^{\sqrt{n}t(\beta_j-\hat{\beta}_j)}\Big|Y\right)+\mathbb{E}_{\Pi_S}\left(e^{-\sqrt{n}t(\beta_j-\hat{\beta}_j)}\Big|Y\right)\leq 2e^{\frac{1}{2}\lambda^2+16t^2}.$$
Letting $t=\sqrt{\log p}$, we have
$$\Pi_S\left(\norm{\beta_S-\hat{\beta}_S}_{\infty}>\frac{1}{2}M\sqrt{\frac{\log p}{n}}\Big|Y\right)\leq 2e^{\lambda^2/2}p^{-\left(\frac{M}{2}-17\right)},$$
which bounds the first term of (\ref{eq:plan}).

Now, let us give a bound for the second term of (\ref{eq:plan}). Given $j=\argmax_{l\in[p]}|\beta_j^*|$, for any $S\subset[p]$ such that $j\notin S$, define $S'=S\cup\{j\}$. We are going to provide a bound for $\omega(S)/\omega(S')$ on the event $E$ to argue the model $S'$ is favored over the model $S$ under the posterior distribution if $|\beta_j^*|$ is large. Because of (\ref{eq:lounici}), $|\bar{\Z}_{|S|}|=|\Z_{|S|}|={p\choose |S|}$ for all $|S|\leq (1+\delta)s^*$. By (\ref{eq:post-omega}), we have
$$\frac{\omega(S)}{\omega(S')}=\frac{\pi(|S|)}{\pi(|S'|)}\frac{{p\choose|S'|}}{{p\choose|S|}}\frac{\sqrt{\pi}}{\lambda}\frac{\mathcal{N}_{X_{*S}\hat{\beta}_S,\lambda}}{\mathcal{N}_{X_{*S'}\hat{\beta}_{S'},\lambda}}e^{-\frac{1}{2}\norm{Y-X_{*S}\hat{\beta}_S}^2+\frac{1}{2}\norm{Y-X_{*S'}\hat{\beta}_{S'}}^2}.$$
Since $\frac{\pi(|S|)}{\pi(|S'|)}\leq \exp\left(2D\log(ep)\right)$, $\frac{{p\choose|S'|}}{{p\choose|S|}}\leq p$, 
$\frac{\mathcal{N}_{X_{*S}\hat{\beta}_S,\lambda}}{\mathcal{N}_{X_{*S'}\hat{\beta}_{S'},\lambda}}\leq e^{\lambda\norm{X_{*S}\hat{\beta}_{S}-X_{*S'}\hat{\beta}_{S'}}}$
by the definition (\ref{eq:post-n}) and a change of variable, and $-\frac{1}{2}\norm{Y-X_{*S}\hat{\beta}_S}^2+\frac{1}{2}\norm{Y-X_{*S'}\hat{\beta}_{S'}}^2=\frac{1}{2}\norm{X_{*S}\hat{\beta}_S}^2-\frac{1}{2}\norm{X_{*S'}\hat{\beta}_{S'}}^2$, we have
\begin{equation}
\frac{\omega(S)}{\omega(S')} \leq \frac{\sqrt{\pi}}{\lambda}(ep)^{2D+1}e^{\lambda\norm{X_{*S}\hat{\beta}_{S}-X_{*S'}\hat{\beta}_{S'}}+\frac{1}{2}\norm{X_{*S}\hat{\beta}_S}^2-\frac{1}{2}\norm{X_{*S'}\hat{\beta}_{S'}}^2}.\label{eq:post-ratio}
\end{equation}
Let $P_S$ and $P_{S'}$ stand for the projection matrix onto the column spaces of $X_{*S}$ and $X_{*S'}$, respectively. Then $X_{*S}\hat{\beta}_S=P_SY$ and $X_{*S'}\hat{\beta}_{S'}=P_{S'}Y$. Let $F$ be the orthogonal complement of the columns space of $X_{*S}$ in the column space of $X_{*S'}$, and then define $P_F$ to be the associated projection matrix. It is easy to see that $P_{S'}=P_S+P_F$ and $P_SP_F=0$. Thus, the exponent of (\ref{eq:post-ratio}) equals $\lambda\norm{P_FY}-\frac{1}{2}\norm{P_FY}^2\leq -\frac{1}{4}\norm{P_FY}^2+\lambda^2\leq -\frac{1}{8}\norm{P_FX\beta^*}^2+\frac{1}{4}\norm{P_FW}^2+\lambda^2$. We are going to give a lower bound on $\norm{P_FX\beta^*}^2$ and an upper bound on $\norm{P_FW}^2$. To facilitate the proof, we bound $\norm{P_SX_{*j}}^2$ as
\begin{eqnarray}
\nonumber \norm{P_SX_{*j}}^2 &=& X_{*j}^TX_{*S}(X_{*S}^TX_{*S})^{-1}X_{*S}^TX_{*j} \\
\nonumber &\leq& 4n\norm{n^{-1}X_{*S}^TX_{*j}}^2 \\
\label{eq:8/9} &\leq& 8ns^*\tau^2\leq \frac{n}{10}
\end{eqnarray}
by (\ref{eq:lounici}) and $\tau s^*\leq 1/9$. The noise part $\norm{P_FW}^2$ is bounded as
\begin{eqnarray}
\nonumber \norm{P_FW}^2 &=& \left\|\frac{(I-P_S)X_{*j}X_{*j}^T(I-P_S)}{\norm{(I-P_S)X_{*j}}^2}W\right\|^2 \\
\nonumber &\leq& \frac{|X_{*j}^T(I-P_S)W|^2}{\norm{(I-P_S)X_{*j}}^2} \\
\label{eq:pn2} &\leq& \frac{2|X_{*j}^TW|^2+2|X_{*j}^TP_SW|^2}{9n/10} \\
\label{eq:pn3} &\leq& 8C_1^2\log p,
\end{eqnarray}
where (\ref{eq:pn2}) is because of (\ref{eq:8/9}) and (\ref{eq:pn3}) is derived from the event $E$ and the following argument that
\begin{eqnarray*}
|X_{*j}^TP_SW|^2 &=& |X_{*j}^TX_{*S}(X_{*S}^TX_{*S})^{-1}X_{*S}^TW|^2 \\
&\leq& 16n\norm{X_{*j}^TX_{*S}/n}^2\norm{X_{*S}^TW/\sqrt{n}}^2 \\
&\leq& 32C_1^2(s^*\tau)^2n\log p\leq \frac{1}{2}C_1^2n\log p
\end{eqnarray*}
by (\ref{eq:lounici}) and the event $E$. The signal part $\norm{P_FX\beta^*}^2$ is lower bounded by
$$\norm{P_FX\beta^*}\geq \norm{(I-P_S)X_{*j}}|\beta_j^*|-\sum_{l\in S^*\cap (S\cup\{j\})^c}\frac{|X_{*j}^T(I-P_S)X_{*l}|}{\norm{(I-P_S)X_{*j}}}|\beta_l^*|,$$
where the first term on the right hand side above is lower bounded by $\sqrt{9n/10}|\beta_j^*|$ by (\ref{eq:8/9}), and the second term is upper bounded by
$$ \sum_{l\in S^*\cap \{j\}^c}|\beta_l^*|\frac{|X_{*j}^TX_{*l}|}{\norm{(I-P_S)X_{*j}}} + \sum_{l\in S^*\cap S^c}|\beta_l^*|\frac{|X_{*j}^TP_SX_{*l}|}{\norm{(I-P_S)X_{*j}}} \leq 7\sqrt{n}|\beta_j^*|/9$$
due to (\ref{eq:8/9}), (\ref{eq:lounici}), $\tau s^*\leq 1/9$ and the fact $|\beta_{j}^*|=\max_{l\in[p]}|\beta_l^*|$. Therefore, $\norm{P_FX\beta^*}\geq \sqrt{n}|\beta_j^*|/7$. When $|\beta_j^*|\geq400 C_1\sqrt{\frac{\log p}{n}}$, we have $-\frac{1}{8}\norm{P_FX\beta^*}^2+\frac{1}{4}\norm{P_FW}^2\leq -2C_1^2\log p$. Plugging this bound into (\ref{eq:post-ratio}), we have $\frac{\omega(S)}{\omega(S')}\leq \frac{\sqrt{\pi}}{\lambda}e^{2D+1+\lambda^2}p^{-(2C_1^2-2D-1)}$, which implies
$$\sum_{\substack{|S|\leq (1+\delta)s^*\\ j\notin S}}\omega(S)\mathbb{I}_E=\sum_{\substack{|S|\leq (1+\delta)s^*\\ j\notin S}}\frac{\omega(S)}{\omega(S\cup\{j\})}\omega(S\cup\{j\})\mathbb{I}_E\leq \frac{\sqrt{\pi}}{\lambda}e^{2D+1+\lambda^2}p^{-(2C_1^2-2D-1)}.$$
By letting $C_2=400C_1$, a mathematical induction argument in \cite{castillo2014} leads to a bound on the second term of (\ref{eq:plan}) that
$$\sum_{\substack{|S|\leq (1+\delta)s^*\\ \norm{\beta^*_{S^c}}_{\infty}>C_2\sqrt{\frac{\log p}{n}}}}\omega(S)\mathbb{I}_E\leq p^{-C_3},$$
for some constant $C_3$ depending on $C_1,D,\lambda$. Moreover, $C_3$ is increasing with $C_1$. Note that the constant $400$ can be significantly improved through a more careful analysis, especially when the design matrix is nearly orthogonal. However, we focus on the rate of the problem and does not make such effort.

Finally, combining the bounds for the four terms in (\ref{eq:plan}), we get
\begin{eqnarray*}
&& \mathbb{E}\Pi\left(\norm{\beta-\beta^*}_{\infty}>M\sqrt{\frac{\log p}{n}}\Big|Y\right) \\
&\leq& 2e^{\lambda^2/2}p^{-\left(\frac{M}{2}-17\right)}+p^{-C_3}+p^{-\left(\frac{C_1\rho}{2}-1\right)}+e^{-C's^*\log\frac{ep}{s^*}} \\
&\leq& p^{-C_4},
\end{eqnarray*}
for some $M,C_4$ depending on $\rho,\lambda,D$.
\end{proof}

\begin{proof}[Proof of Theorem \ref{thm:sup-g}]
For $B_T$, we use $\norm{\cdot}$ to denote the $\ell_2$ norm as $\norm{B_T}=\sqrt{\sum_{(i,j)\in T}B_{ij}^2}$.
Let us first establish (\ref{eq:sup-g-dim}) and (\ref{eq:sup-g-2}). The proof is close to that of Theorem \ref{thm:main}. By the definition of the prior, the posterior distribution has formula
\begin{equation}
\Pi(B\in U|Y) = \frac{\sum_T\alpha(T)R(T,U)}{\sum_T\alpha(T)R(T)},\label{eq:sup-form1}
\end{equation}
where $R(T,U)$ is defined by
$$\left(\frac{\lambda}{\sqrt{\pi}}\right)^{|T|}\int_{(B_T,0_{T^c})\in U}e^{-\frac{1}{2}\norm{(B_T,0_{T^c})-B^*}^2+\iprod{W}{(B_T,0_{T^c})-B^*}-\lambda\norm{B_T}}dB_T,$$
$R(T)=R(T,\mathbb{R}^{p\times m})$ and
$$\alpha(T)=\exp\left(-D\left(|r(T)|\log\frac{ep}{|r(T)|}+|T|\log\frac{em|r(T)|}{|T|}\right)\right).$$
Moreover, for a set of subsets $\mathcal{A}$, the posterior distribution can be written as
\begin{equation}
\Pi(T\in\mathcal{A}|Y) = \frac{\sum_{T\in\mathcal{A}}\alpha(T)R(T)}{\sum_T\alpha(T)R(T)}. \label{eq:sup-form2}
\end{equation}
We need to give a lower bound for $R(T^*)$ with $T^*=S^*\times[m]$ and give upper bounds for $R(T)$ and $R(T,U)$. For each subset $T$, define the following events
\begin{eqnarray*}
E_T &=& \left\{\left|\iprod{W}{(B_T,0_{T^c})-B^*}\right|\leq\sqrt{C_1\left(m|r(T)|+|r(T)|\log\frac{ep}{|r(T)|}\right)}\norm{(B_T,0_{T^c})-B^*}\text{ for all }B_T\in\mathbb{R}^{|T|}\right\}, \\
F_T &=& \left\{\left|\iprod{W}{(B_T,0_{T^c})-B^*}\right|\leq\sqrt{C_1\left(ms^*+s^*\log\frac{ep}{s^*}\right)}\norm{(B_T,0_{T^c})-B^*}\text{ for all }B_T\in\mathbb{R}^{|T|}\right\}
\end{eqnarray*}
for some constant $C_1>0$ to be determined later. A special case of Lemma \ref{lem:noise} gives
\begin{equation}
\mathbb{P}(E_T^c) \leq 2e^{-(\rho C_1/16-5)\left(m|r(T)|+|r(T)|\log\frac{ep}{|r(T)|}\right)}\quad\text{and}\quad \mathbb{P}(F_T^c) \leq 2e^{5m|r(T)|-\frac{\rho C_1}{16}\left(ms^*+s^*\log\frac{ep}{s^*}\right)}.
\end{equation}
The same arguments used for deriving (\ref{eq:denom-lower}), (\ref{eq:num-upper}) and (\ref{eq:numU-upper}) imply
\begin{eqnarray}
\label{eq:denom-g-lower} R(T^*) &\geq& e^{-\lambda\norm{B^*}-(1+\lambda+\lambda^{-1})ms^*}, \\
\label{eq:num-g-upper} R(T)\mathbb{I}_{E_T} &\leq& e^{4\lambda^2-\lambda\norm{B^*}+C_1'\left(m|r(T)|+|r(T)|\log\frac{ep}{|r(T)|}\right)}, \\
\label{eq:numU-g-upper} R(T,U)\mathbb{I}_{F_T} &\leq& e^{-\lambda\norm{B^*}-\frac{1}{16}M\left(ms^*+s^*\log\frac{ep}{s^*}\right)},
\end{eqnarray}
with $U=\left\{\norm{B-B^*}>M\left(ms^*+s^*\log\frac{ep}{s^*}\right)\right\}$ for some sufficiently large $M$. The constant $C_1'$ us defined as $4C_1+2C_1/C_2+C_1|\log(2\lambda)|$.
Let $\mathcal{A}=\{|r(T)|>(1+\delta)s^*\}$. By the formula (\ref{eq:sup-form2}) and the inequalities (\ref{eq:denom-g-lower}) and (\ref{eq:num-g-upper}), we have
\begin{eqnarray*}
&& \mathbb{E}\Pi(T\in\mathcal{A}|Y) \\
&\leq& \sum_{T\in\mathcal{A}}\frac{\alpha(T)}{\alpha(T^*)}\mathbb{E}\frac{R(T)}{R(T^*)}\mathbb{I}_{E_T}+\sum_{T\in\mathcal{A}}\mathbb{P}(E_T^c) \\
&\leq& e^{(\tilde{C}_2+D)\left(ms^*+s^*\log\frac{ep}{s^*}\right)}\sum_{s>(1+\delta)s^*}\sum_{S:|S|=s}e^{-(D-\tilde{C}_2)(ms+s\log\frac{ep}{s})}\sum_{T:r(T)=S}e^{-D|T|\log\frac{ems}{|T|}} \\
&& +2\sum_{s>(1+\delta)s^*}\sum_{S:|S|=s}e^{-(\rho C_1/16-7)\left(ms+s\log\frac{ep}{s}\right)} \\
&\leq& e^{-C'\left(ms^*+s^*\log\frac{ep}{s^*}\right)}
\end{eqnarray*}
for some sufficiently large $D$ with $\tilde{C}_2$ only depending on $C_1,C_2$ and $\lambda$ and $C'$ only depending on $D,\rho,\lambda$. By the formula (\ref{eq:sup-form1}) and the inequalities (\ref{eq:denom-g-lower}) and (\ref{eq:numU-upper}), we have
\begin{eqnarray*}
&& \mathbb{E}\Pi(B\in U|Y) \\
&\leq& \sum_{T\in\mathcal{A}^c}\frac{\alpha(T)}{\alpha(T^*)}\mathbb{E}\frac{R(T,U)}{R(T^*)}\mathbb{I}_{F_T} + \sum_{T\in\mathcal{A}^c}\mathbb{P}(F_T^c)+e^{-C'\left(ms^*+s^*\log\frac{ep}{s^*}\right)} \\
&\leq& e^{-\left(\frac{1}{16}M-C_3-D\right)\left(ms^*+s^*\log\frac{ep}{s^*}\right)}\sum_{s\leq (1+\delta)s^*}\sum_{S:|S|=s}e^{-D(ms+s\log\frac{ep}{s})}\sum_{T:r(T)=S}e^{-D|T|\log\frac{ems}{|T|}} \\
&& +2e^{-\frac{\rho C_1}{16}\left(ms^*+s^*\log\frac{ep}{s^*}\right)}\sum_{s\leq (1+\delta)s^*}\sum_{S:|S|=s}e^{6ms}+e^{-C'\left(ms^*+s^*\log\frac{ep}{s^*}\right)}\\
&\leq& e^{-C''\left(ms^*+s^*\log\frac{ep}{s^*}\right)}
\end{eqnarray*}
for some sufficiently large $M$ with $C_3$ only depending on $C_1,C_2$ and $\lambda$ and $C''$ only depending on $D,\rho,\lambda$. Hence, (\ref{eq:sup-g-dim}) and (\ref{eq:sup-g-2}) are proved.

Now let us proceed to prove (\ref{eq:sup-g-inf}). We are going to use the similar argument as that of Theorem \ref{thm:sup}. Note that the posterior distribution can be represented as
$$\sum_T\Pi(T|Y)\Pi_T(\cdot|Y)=\sum_T\omega(T)\mathcal{L}(Y_T,\lambda)\otimes \delta_{T^c},$$
where $\Pi(T|Y)=\omega(T)$ and $\Pi_T(\cdot|Y)=\mathcal{L}(Y_T,\lambda)\otimes \delta_{T^c}$ with
$$\omega(T)\propto\left(\frac{\lambda}{\sqrt{\pi}}\right)^{|T|}\alpha(T)\mathcal{N}_{Y_T,\lambda}e^{\frac{1}{2}\norm{Y_T}^2}.$$
The distribution $B_T\sim\mathcal{L}(Y_T,\lambda)$ is defined through the density function
$$\mathcal{N}_{Y_T,\lambda}^{-1}e^{-\frac{1}{2}\norm{B_T-Y_T}^2-\lambda\norm{B_T}},$$
where $\mathcal{N}_{Y_T,\lambda}$ is the normalizing constant defined in (\ref{eq:post-n}). Define the event
$$E=\left\{\max_{(i,j)\in[p]\times[m]}|W_{ij}|\leq C_1\sqrt{\log(pm)}\right\}$$
for some constant $C_1>0$. We have
\begin{eqnarray}
\nonumber&& \mathbb{E}\Pi\left(\norm{B-B^*}_{\infty}>M\sqrt{\log (pm)}\Big|Y\right) \\
\nonumber&\leq& \mathbb{E}\sum_{|r(T)|\leq (1+\delta)s^*}\omega(T)\Pi_T\left(\norm{B_T-Y_T}_{\infty}>\frac{1}{2}M\sqrt{\log (pm)}\Big|Y\right)\mathbb{I}_E + \mathbb{E}\sum_{\substack{|r(T)|\leq(1+\delta)s^*\\ \norm{B^*_{T^c}}_{\infty}>C_2\sqrt{\log(pm)}}}\omega(T)\mathbb{I}_E \\
\label{eq:plan2}&& + \mathbb{P}(E^c) + \mathbb{E}\Pi\left(|r(T)|>(1+\delta)s^*|Y\right).
\end{eqnarray}
It is sufficient to bound the four terms in (\ref{eq:plan2}). The last term is bounded by (\ref{eq:sup-g-dim}). Using (\ref{eq:subG}) and a union bound argument, we bound the third term in (\ref{eq:plan2}) as $\mathbb{P}(E^c)\leq (pm)^{-\left(\frac{\rho C_1^2}{2}-1\right)}$. Using the same arguments in deriving (\ref{eq:used-g}) and (\ref{eq:sup-from}), we have
\begin{eqnarray*}
&& \Pi_T\left(\norm{B_T-Y_T}_{\infty}>\frac{1}{2}M\sqrt{\log (pm)}\Big|Y\right) \\
&\leq& \sum_{(i,j)\in T}\exp\left(-\frac{1}{2}tM\sqrt{\log(pm)}\right)\mathbb{E}_{\Pi_T}\left(e^{\sqrt{n}t|B_{ij}-Y_{ij}|}\Big|Y\right) \\
&\leq& 2e^{\lambda^2/2}pm e^{-\frac{1}{2}tM\sqrt{\log(pm)}+t^2} \leq 2e^{\lambda^2/2}(pm)^{-\left(\frac{M}{2}-2\right)}
\end{eqnarray*}
by choosing $t=\sqrt{\log(pm)}$. This bounds the first term of (\ref{eq:plan2}). Now let us provide a bound for the first term of (\ref{eq:plan2}). Given some $(i,j)\in[p]\times[m]$, for any subset $T$ such that $(i,j)\notin T$, use the notation $T'=T\cup\{(i,j)\}$. To facilitate the proof, we need an upper bound for $\omega(T)/\omega(T')$ on the event $E$. Direct calculation gives
$$\frac{\omega(T)}{\omega(T')}=\frac{\sqrt{\pi}}{\lambda}\frac{\alpha(T)}{\alpha(T')}\frac{\mathcal{N}_{Y_T,\lambda}}{\mathcal{N}_{Y_{T'},\lambda}}e^{-\frac{1}{2}Y_{ij}^2}.$$
Since $\frac{\alpha(T)}{\alpha(T')}\leq(epm)^{3D}$, and
\begin{equation}
\frac{\mathcal{N}_{Y_T,\lambda}}{\mathcal{N}_{Y_{T'},\lambda}}\leq C_{\lambda}e^{\lambda|Y_{ij}|}\label{eq:final...}
\end{equation}
for some constant $C_{\lambda}$ only depending on $\lambda$, we have $\omega(T)/\omega(T')\leq C_{\lambda}\frac{\sqrt{\pi}}{\lambda}(epm)^{3D}e^{\lambda|Y_{ij}|-\frac{1}{2}Y_{ij}^2}$. The inequality (\ref{eq:final...}) will be established in the end of the proof. Since
\begin{eqnarray*}
\lambda|Y_{ij}|-\frac{1}{2}Y_{ij}^2 &\leq& \lambda^2-\frac{1}{4}Y_{ij}^2 \\
&\leq& \lambda^2-\frac{1}{8}(B_{ij}^*)^2+\frac{1}{4}W_{ij}^2\leq \lambda^2-\frac{1}{4}C_1^2\log(pm) 
\end{eqnarray*}
when $|B_{ij}^*|>2C_1\sqrt{\log(pm)}$ on the event $E$. Hence,
\begin{equation}
\frac{\omega(T)}{\omega(T')}\mathbb{I}_E\leq C_{\lambda}\frac{\sqrt{\pi}}{\lambda}e^{3D+\lambda^2}(pm)^{-\left(\frac{1}{4}C_1^2-3D\right)}.\label{eq:final-ratio}
\end{equation}
Let $C_2=2C_1$ and define $\{(i_1,j_1),...,(i_q,j_q)\}$ to be the set such that $|B_{i_lj_l}^*|>2C_1\sqrt{\log(pm)}$ for all $l\in[q]$. Then, we have
$$\{\norm{B^*_{T^c}}_{\infty}>C_2\sqrt{\log(pm)}\}\subset \cup_{l\in[q]}\{(i_l,j_l)\notin T\},$$
which implies
\begin{eqnarray*}
\sum_{\substack{|r(T)|\leq(1+\delta)s^*\\ \norm{B^*_{T^c}}_{\infty}>C_2\sqrt{\log(pm)}}}\omega(T) &\leq& \sum_{l\in[q]}\sum_{T\in\{T: (i_l,j_l)\notin T\}}\frac{\omega(T)}{\omega(T\cup\{(i_l,j_l)\})}\omega(T\cup\{(i_l,j_l)\}) \\
&\leq& C_{\lambda}\frac{\sqrt{\pi}}{\lambda}e^{3D+\lambda^2}(pm)^{-\left(\frac{1}{4}C_1^2-3D\right)}\sum_{l\in[q]}\sum_{T\in\{T: (i_l,j_l)\notin T\}}\omega(T\cup\{(i_l,j_l)\}) \\
&\leq& (pm)^{-\bar{C}}
\end{eqnarray*}
by (\ref{eq:final-ratio}) for some constant $\bar{C}$ with sufficiently large $C_1$. Combining the bounds for the four terms in (\ref{eq:plan}), we reach the conclusion (\ref{eq:sup-g-inf}).

Finally, let us establish (\ref{eq:final...}) to close the proof. By change of variable, we have
$$\mathcal{N}_{Y_T,\lambda}=\int_{\mathbb{R}^{|T|-1}}\int_{\mathbb{R}} e^{-\frac{1}{2}b_1^2-\frac{1}{2}\norm{b_2}^2-\lambda\sqrt{(b_1+\norm{Y_T})^2+\norm{b_2}^2}}db_1db_2,$$
and
$$\mathcal{N}_{Y_{T'},\lambda}=\int_{\mathbb{R}}\int_{\mathbb{R}^{|T|-1}}\int_{\mathbb{R}} e^{-\frac{1}{2}(b_1^2+b_3^2)-\frac{1}{2}\norm{b_2}^2-\lambda\sqrt{(b_1+\norm{Y_{T'}})^2+\norm{b_2}^2+b_3^2}}db_1db_2db_3.$$
Therefore, triangle inequality implies
$$\mathcal{N}_{Y_{T'},\lambda}\geq \mathcal{N}_{Y_T,\lambda} \int_{\mathbb{R}}e^{-\frac{1}{2}b^2-\lambda|b|}db e^{-\lambda\left|\norm{Y_T}-\norm{Y_{T'}}\right|}\geq C_{\lambda}^{-1}e^{-\lambda|Y_{ij}|},$$
where $C_{\lambda}=\left(\int_{\mathbb{R}}e^{-\frac{1}{2}b^2-\lambda|b|}db\right)^{-1}$. Thus, the proof is complete.
\end{proof}





\end{document}